\newcommand{\CC}{{\mathbb C}}
\newcommand{\R}{{\mathbb R}}
\newcommand{\PP}{{\mathbb P}}
\newcommand{\p}{{\mathfrak p}}
\newcommand{\Z}{{\mathbb Z}}
\newcommand{\N}{{\mathbb N}}
\newcommand{\QQ}{{\mathbb Q}}
\newcommand{\OO}{{\mathcal O}}
\newcommand{\mC}{{\mathcal C}}
\newcommand{\mD}{{\mathcal D}}
\newcommand{\mR}{{\mathcal R}}
\newcommand{\cL}{{\mathcal L}}
\newcommand{\cB}{{\mathcal B}}
\NewDocumentCommand{\dslash}{s}{%
  \IfBooleanTF{#1}
    {\big/\mkern-7mu\big/}
    {/\mkern-6mu/}%
}
\DeclareMathOperator{\Spec}{Spec}
\DeclareMathOperator{\Proj}{Proj}
\DeclareMathOperator{\Div}{Div}
\DeclareMathOperator{\HHom}{Hom}
\DeclareMathOperator{\im}{im}
\DeclareMathOperator{\Pic}{Pic}
\DeclareMathOperator{\Cl}{Cl}
\DeclareMathOperator{\Eff}{Eff}
\DeclareMathOperator{\Nef}{Nef}
\DeclareMathOperator{\Mov}{Mov}
\DeclareMathOperator{\Cox}{Cox}
\DeclareMathOperator{\Rel}{Rel}
\DeclareMathOperator{\rk}{rk}
\newcommand{\paper}{: \begin{it}}
\newcommand{\jour }{, \end{it}}
\newtheorem{theorem}{Theorem}[section]
\newtheorem{thm}[theorem]{Theorem}
\newtheorem{lemma}[theorem]{Lemma}
\newtheorem{proposition}[theorem]{Proposition}
\newtheorem{corollary}[theorem]{Corollary}
\theoremstyle{definition}
\newtheorem{definition}[theorem]{Definition}
\newtheorem{example}[theorem]{Example}
\newtheorem*{theoremA}{Theorem A}
\theoremstyle{remark}
\newtheorem{remark}[theorem]{Remark}
\newtheorem{rmk}[theorem]{Remark}
\numberwithin{equation}{section}
\newcommand{\equ}{\ensuremath{\,=\,}}
\newcommand{\dleq}{\ensuremath{\,\leq\,}}
\newcommand{\deq}{\ensuremath{\stackrel{\textrm{def}}{=}}}
\newcommand{\sC}{\ensuremath{\mathscr{C}}}
\newcommand{\vol}[2]{\ensuremath{{\rm vol}_{#1}( #2 ) } }
\newcommand{\dsubseteq}{\ensuremath{\,\subseteq\,}}
\DeclareMathOperator{\Supp}{Supp}
\DeclareMathOperator{\Quot}{Quot}
\newcommand{\cA}{\ensuremath{\mathcal A}}
\newcommand{\cO}{\ensuremath{\mathcal O}}
\DeclareMathOperator{\intt}{int}
\DeclareMathOperator{\Effb}{\overline{\mathrm{Eff}}}
\newcommand{\rat}{\ensuremath{\dashrightarrow}}
\newcommand{\mcal}{\mathcal}
\newcommand{\lto}{\longrightarrow}
\DeclareMathOperator{\Mob}{Mob}
\DeclareMathOperator{\sB}{\mathbf{B}}
\newcommand{\lra}{\ensuremath{\longrightarrow}}
\title[Geometry of multigraded rings]{Geometry of multigraded rings and embeddings into toric varieties}
\author{Alex K\"uronya}
\address{Institut f\"ur Mathematik, Goethe-Universit\"at Frankfurt, Robert-Mayer-Str. 6-10, D-60325 Frankfurt am Main, Germany }
\email{kuronya@math.uni-frankfurt.de}
\author{Stefano Urbinati}
\address{Dipartimento di Scienze Matematiche, Informatiche e Fisiche , Universit\`a degli Studi di Udine, Via delle Scienze 206, Udine, Italy}
\email{urbinati.st@gmail.com}
\dedicatory{Dedicated to Bill Fulton on the occasion of his 80\textsuperscript{th} birthday.}
\subjclass{}
\date{}
\subjclass[2010]{Primary: 14C20. Secondary: 14E25, 14M25, 14E30}
\begin{document}
\begin{abstract}
We use homogeneous spectra of multigraded rings to construct toric embeddings of a large family of projective varieties  which preserve some of the birational geometry of the underlying variety, generalizing the well-known construction associated to Mori Dream Spaces. 
\end{abstract}

\maketitle

\tableofcontents

\section*{Introduction}

 In this paper our principal aim is to construct embeddings of projective varieties into simplicial toric ones which partially preserve the birational geometry of the underlying spaces. 
This we plan to achieve by  pursuing two  interconnected trains of thought: on one hand we study    geometric spaces that arise from multigraded rings, and on the other hand we  analyze the relationship between local Cox rings  and embeddings of projective varieties into toric ones. The latter contributes to our understanding of birational maps between varieties along the lines of \cites{HK,KKL,LV}.

It has been known for a long time  how to build  a projective variety or scheme starting from a ring graded by the natural numbers. This  construction gave rise to the  very satisfactory theory of projective spectra, which has become absolutely fundamental to modern algebraic geometry. Brenner and Schr\"oer in \cite{BS} pointed out how to generalize the above process to rings that are graded by finitely generated abelian groups. The class of schemes that arise in their work are still reasonably well-behaved, but they tend to be non-separated, which appears to be an obstacle in applications. Here we show how to utilize the theory in \cite{BS} to construct toric embeddings for projective varieties. 

While the homogeneous spectrum of an $\N$-graded ring gives an interesting interplay between commutative algebra and algebraic geometry already, in the multigraded case a new aspect arises: convex geometry starts playing a significant role. More concretely,  one associates convex cones in the vector space spanned by the grading group to elements of the underlying ring, and geometric properties --- most importantly, separability --- will depend on the relative positions of these cones.

The philosophical starting point of our work is the article  \cite{HK} of Hu--Keel on Cox rings of Mori dream spaces. As shown in \cite{HK}, Mori dreams spaces are precisely the varieties
whose Cox rings (or total coordinate rings) are finitely generated. In proving their results, Hu and Keel rely on an intricate analysis of the variation of GIT quotients (cf. \cites{DH,Th}), however the equivalence of being a Mori dream space and having a finitely generated Cox rings can be demonstrated my methods of the minimal model program as well \cite{KKL}.
The cone of (pseudo)effective divisors of a Mori dream space admits a finite decomposition into finite rational polyhedral chambers, which correspond to birational models of the variety. 
Cox rings connect algebraic geometry to combinatorics and even number theory in the form of universal torsors \cites{CoxRingsBook,CT,HTsch}. 

To be more concrete, by definition,  a Mori dream space is a normal projective $\QQ$-factorial variety satisfying the following conditions
\begin{enumerate}
	\item  $N^1(X)_\QQ\simeq \Pic(X)_\QQ$,
	\item  the cone of movable divisors is a cone over a rational polytope, and 
	\item  the nef cone is generated by finitely many semi-ample divisors.
\end{enumerate}

The issue of finite generation put aside, the first of the three conditions is surprisingly restrictive, in particular it implies that divisors numerically equivalent to finitely generated ones are themselves finitely generated. It was realized in \cite{KKL} that this latter condition is very important for certain variants of the minimal model program to work, and at the same time it does not hold in general (cf. Example~\ref{eg:fin gen not invariant num}).  

A divisor $D$ as above, that is, having the property that the section ring of  every divisor $D'$ numerically equivalent to $D$ is finitely generated, are called \emph{gen} in \cite{KKL} (see Definition~\ref{defn:gen}). Ample and adjoint divisors on projective varieties are gen, this is how gen divisors typically occur in nature when the condition $N^1(X)_\QQ\simeq \Pic(X)_\QQ$ is not  met.  

An important contribution of the article \cite{HK} in their Proposition 2.11  is the construction of an embedding of an arbitrary Mori dream space $X$ into a simplicial (normal projective) toric variety $Z$ having the property that the Mori chamber structure and the birational geometry of $X$ is realized via pulling back chambers or birational maps from $Z$. This underlines the observation of Reed that flips (at least under strong finite generation hypotheses) should come from a toric situation. The Hu--Keel embedding was used in \cite{PU} in connection with construction of maximal rank valuations and indentifying Chow quotients of Mori dream spaces (cf. \cite{KSZ}). 

One way to generalize the results of Hu and Keel are to consider cones of divisors with finitely generated local Cox rings. Along with the issue of gen divisors (which prevents the methods of \cite{HK} to be directly applicable in this more general context), this point of view was taken up in \cite{KKL}. It was demonstrated in  \emph{loc. cit.} that cones with finitely generated Cox rings are the natural habitat of the minimal model program. 

Following the line of thought of \cite{KKL}, we pursue establishing a local version of the embedding result of \cite{HK}. We will study local Cox rings (certain finitely generated subrings of total coordinate rings) on normal projective varieties and construct toric embeddings which recover the Mori chamber structure and birational maps of the part of the cone of effective divisor covered by the local Cox ring in question. 

Observe that chamber structures analogous to the one on Mori dream spaces exist in the abscence of such strong finite generation hypotheses as well. On surfaces the variation of Zariski decomposition yields a locally finite chamber decomposition without any sort of finite generation condition \cite{BKS}, and it has been known for several decades that the parts of the pseudo-effective cone of a projective variety that is seen by the Mori program decomposes into finite rational polyhedral chambers. 

It is important to point out that we do not require the Picard group to be finitely generated, hence the methods of Hu--Keel do not apply directly. Although  our arguments are in a sense modeled on those of \cite{HK}, as we will see, our  strategy and our  tools are  different. 

Our main result is the following. 

\begin{theoremA}[Theorem~\ref{thm:embedded MMP}]	
Let $X$ be a $\QQ$-factorial normal projective variety over the complex numbers, $\mC\subseteq\Div(X)$ a finitely generated cone containing an ample divisor such that   its image in $N^1(X)_\R$ has maximal dimension and every divisor in the cone is gen. Then there exists a closed embedding $X \subseteq W$ into a simplicial projective toric variety such that
\begin{enumerate}
		\item the restriction $N^1(W)_\R \to N^1(X)_\R$ is an isomorphism;
		\item the isomorphism of (1) induces an isomorphism $NE^1(W)\to \mC \subseteq N^1(X)_\R$.
 
		\item If in addition every divisor in $\mC$ is gen then every Mori chamber of $X$ is a union of finitely many Mori chambers of W, and 
		\item for every rational contraction $f: X\dashrightarrow X'$ there is toric rational contraction $\overline f: W \dashrightarrow W'$, regular at the generic point of $X$, such that $f=\overline{f}|_X$.
\end{enumerate}
\end{theoremA}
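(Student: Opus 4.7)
The plan is to mimic the Hu--Keel construction from Proposition 2.11 of \cite{HK}, but to replace their GIT/Cox\Ndash ring setup with the multigraded homogeneous spectrum of Brenner--Schr\"oer \cite{BS} as developed in the earlier sections of this paper. First I would form the local Cox ring
\[
R \equ R(\mC) \equ \bigoplus_{D \in \mC \cap \Div(X)} H^0\left(X,\OO_X(D)\right),
\]
graded by $\mC \cap \Div(X)$. By the assumptions that $\mC$ is a finitely generated cone and that every divisor in $\mC$ is gen (Definition~\ref{defn:gen}), the ring $R$ is a finitely generated $\CC$\Ndash algebra (compare \cite{KKL}). I would then pick homogeneous generators $s_1,\dots,s_N \in R$ with $\deg s_i \equ D_i \in \mC$, chosen so that the positive $\R$\Ndash span of $\{D_1,\dots,D_N\}$ equals $\mC$ and includes an ample class. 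This produces a surjection of multigraded rings
\[
\pi\colon S \equ \CC[x_1,\dots,x_N]\,\twoheadrightarrow\, R, \qquad \deg x_i \equ D_i,
\]
where $S$ is graded by the finitely generated subgroup $G \subseteq \Div(X)$ spanned by the $D_i$.

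Next, apply the multigraded Proj construction of \cite{BS} to both $S$ and $R$. Because $\mC$ contains an ample class, the resulting scheme $W \equ \Proj S$ is a normal projective toric variety, realizable as a GIT quotient of an open subset of $\Aff^N$ by the torus $T \equ \HHom(G,\mathbb{G}_m)$; its simpliciality follows from standard toric GIT, after a generic perturbation of the stability condition if necessary. The surjection $\pi$ then induces a closed immersion $X \cong \Proj R \hookrightarrow W$.

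With $W$ in hand, I would verify the four assertions in turn. For (1), the Picard group of the simplicial projective toric $W$ is $G$ modulo torus characters, and the maximality of the dimension of the image of $\mC$ in $N^1(X)_\R$ together with the choice of $G$ force the restriction $N^1(W)_\R \to N^1(X)_\R$ to be an isomorphism. Assertion (2) is then immediate, since the effective cone $NE^1(W)$ is generated by the torus\Ndash invariant prime divisors of $W$, which restrict to $D_1,\dots,D_N$ and hence span $\mC$. For (3), the Mori chamber decomposition of $W$ coincides with the VGIT decomposition of $G_\R$, which is finite and rational polyhedral; the gen hypothesis ensures that the pullback of each toric chamber to $X$ corresponds to a single small birational model of $X$, so every Mori chamber of $X$ lying in $\mC$ is a finite union of chambers of $W$. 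For (4), given a rational contraction $f: X \dashrightarrow X'$ corresponding to a sub\Ndash chamber of $\mC$, the toric contraction $\overline f: W \dashrightarrow W'$ obtained by moving the GIT stability into the interior of the corresponding chamber of $W$ restricts to $f$ on $X$; functoriality of the multigraded Proj applied to $\pi$ furnishes the equality $f \equ \overline f|_X$.

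The principal obstacle is step (1), together with its consequences for (3) and (4): since $\Pic(X)$ is not assumed finitely generated, one must argue carefully that the finitely generated subgroup $G$ already surjects onto $N^1(X)_\R$ after tensoring with $\R$, and that passing to numerical equivalence does not collapse distinct toric chambers of $W$ into a single chamber of $X$. The gen hypothesis is what rescues the situation: it guarantees that numerically equivalent divisors in $\mC$ have canonically isomorphic section rings, so that the multigraded Proj really descends through numerical equivalence without losing the chamber structure or the rational contractions encoded by it.
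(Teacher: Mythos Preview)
Your outline follows the right general shape --- present the local Cox ring, surject a polynomial ring onto it, and pass to multigraded $\Proj$ --- but it skips precisely the step the paper identifies as the principal difficulty, and this is a genuine gap.

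You assert that $W \equ \Proj S$ is a projective toric variety ``because $\mC$ contains an ample class'', and that the surjection $\pi$ yields $X \cong \Proj R \hookrightarrow W$. Neither identification is automatic. The Brenner--Schr\"oer multihomogeneous spectrum $\Proj^D S$ is formed by gluing \emph{all} $D_+(f)$ for $f$ relevant, and the paper's Example~\ref{ex:Proj Cox nonsep} shows that already for $X$ the blow-up of $\PP^2$ at a point, $\Proj^{N^1(X)} \Cox(X)$ is \emph{not separated}, hence not isomorphic to $X$. The presence of an ample class in $\mC$ does not cure this: relevant elements whose cones $\sC(f)$ lie in different parts of the effective cone produce charts that fail to glue separatedly (Proposition~\ref{prop: separated open subsets} gives the criterion). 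So writing $W=\Proj S$ and $X\cong\Proj R$ without further comment hides exactly the obstacle the paper is built to overcome.

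The paper's resolution is not to take the full multigraded $\Proj$ of the local Cox ring, but rather to choose ample divisors $\cA_1,\dots,\cA_\rho$ whose classes form a $\QQ$-basis of $N^1(X)$ and to build the embedding from sections of products $\cA_1\otimes\cdots\otimes\cA_\rho$ (Theorem~\ref{thm:closed embedding into separable}, applied in Theorem~\ref{thm:embedding into simplicial toric variety}). All the resulting variables then have degree in a single full-dimensional cone, so Proposition~\ref{prop: separated open subsets} guarantees separatedness. Equivalently, one works with the $\N$-grading along a single ample class $A$: Lemma~\ref{lemma:proj equality} and Corollary~\ref{cor:ample local Cox ring} show $X \simeq \Proj^\N \Cox(\mC')_A \simeq \Proj^{N^1(X)}\Cox(\mC^A)$, and the toric target is $\Proj^\N R(\cA)$ (Proposition~\ref{lemma:simplicial}). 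The birational statements (3) and (4) are then obtained by varying the class $A$ to a general $D$ in $\Mov(X)\cap\mC$ and invoking \cite{KKL}*{Theorems 4.2 and 5.4} to identify $\Proj\bigoplus_m\Cox(\mC')_{mD}$ with the ample model $X'$; this is where the gen hypothesis is actually used. Your proposal does flag the gen hypothesis as important, but locates the difficulty in the wrong place: the issue is not that $G$ might fail to surject onto $N^1(X)_\R$, but that the multigraded $\Proj$ must be forced to be separated by a careful choice of grading before any of the subsequent comparisons make sense.
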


The idea of the construction relies on homogeneous spectra of multigraded rings. The concrete setup goes as follows: let $D$ be a finitely generated abelian group, $S$ a $D$-graded ring. Brenner--Schr\"oer then define the homogeneous spectrum $\Proj^DS$ as a certain subspace of a quotient of $\Spec S$ in the sense of locally ringed spaces.  Much like  a surjection of $\N$-graded rings $k[T_0,\dots,T_r] \twoheadrightarrow A$ (where $k$ is an arbitrary field) gives rise to a closed  embedding $\Proj A \hookrightarrow \PP^{r}$, given a $D$- graded ring $S$, a surjective homomorphism  
\[
k[T_0,\dots,T_r] \longrightarrow S\ ,
\]
where we consider $k[T_0,\dots,T_r]$ is given the induced grading, yields an embedding 
\[
\Proj^D S \ \hookrightarrow\ \Proj^D k[T_0,\dots,T_r]\ ,
\]
where the latter is a simplicial toric prevariety. 

A significant difference between the $\N$-graded and the multigraded case is that in the latter $\Proj^DS$ is often not going to be separated. For a concrete example (see \ref{ex:Proj Cox nonsep}), let $X$ be the blow-up of $\PP^2$ at a point, $D=\Pic(X)$, and $S=\Cox(X)$. Then $\Proj^DS$ is not separated, in particular it is \emph{not} isomorphic to $X$. The non-separability  phenomenon proves to be a major hurdle, and a large part of our work goes into dealing with it. \\

\paragraph{\bf Organization of the article.}

In Section 1 we quickly recall (mainly following \cite{KKL}) the basic results and terminology around divisorial rings and local Cox rings. Section 2 is devoted to the basics of multigraded rings, relevant elements, and the first connections with convex geometry. In Section 3 we present the construction of multihomogeneous spectra from \cite{BS} and expand it to fit our needs. Section 4 hosts a discussion of ample families and constructions of closed embeddings of spectra into (separated) varieties. The last two sections deal with gen divisors and the proof of Theorem~A. \\

\paragraph{\bf Notation.}

Throughout the paper $D$ denotes a finitely generated abelian group, and $S$ denotes a $D$-graded ring (always commutative with $1$). The notation $\Rel^D(S)$ stands for the set of relative elements with respect to the given $D$-grading (see Definition~\ref{defn:relevant}), while $\Proj^D(S)$ denotes the multihomogeneous spectrum of the $D$-graded ring $S$ (see Definition~\ref{defn:multihomogeneous spectrum}). The map $\delta\colon D\to D'$ is a group homomorphism (used for regrading $D$-graded rings). \\

\paragraph{\bf Acknowledgements.}

We would like to thank Dave Anderson,  Andreas B\"auerle, Michel Brion, Mihai Fulger, Vlad Lazi\'c, Alex Massarenti, Yusuf Mustopa, Joaquim Ro\'e, Jakob Stix, Martin Ulirsch,  and Torsten Wedhorn   for helpful discussions. The first author was partially supported by the  LOEWE grant ‘Uniformized Structures in Algebra and Geometry'.

\section{Divisorial rings and Cox rings}

In this preliminary section we collect some  definitions and facts about (multigraded) divisorial rings that we will need in the later sections (mostly Sections 5 and 6) of  the paper. Our main sources are \cites{HK,KKL,LV}, at the same time we point the reader to \cite{CoxRingsBook} for a massive amount of information along these lines. The main purpose of the section is to fix notation and terminology. 

Let $X$ be a normal projective variety over an algebraically closed field, and let  $\mathcal S\subseteq\Div_{\mathbb Q}(X)$ is a finitely generated monoid. 
We say that 
\[
R(X,\mathcal{S}) \deq \bigoplus_{\mD\in\mathcal{S}}H^0\big(X,\mD)
\]
is the {\em divisorial ring associated to $\mathcal S$}. If $\mC\subseteq\Div_{\mathbb R}(X)$ is a rational polyhedral cone then $\mathcal S \deq \mC\cap\Div(X)$ is a finitely generated monoid by Gordan's lemma. We define  $R(X,\mC)$ to be $R(X,\mathcal S)$. We also work with  divisorial rings of the form
\[
\mR \equ R(X;\mD_1, \dots, \mD_r) \equ \bigoplus_{(n_1,\dots, n_r)\in \N^r} H^0(X,
n_1\mD_1+\dots + n_r\mD_r)\ ,
\]
where $\mD_1,\dots, \mD_r\in\Div_\QQ(X)$. If $\mD_i$ are adjoint divisors then  the ring $\mR$ is called an {\em adjoint ring\/}. The {\em support\/} of $\mR$ is the cone
\[
\Supp \mR \deq \{\mD\in\sum\R_+\mD_i\mid |\mD|_\R\neq\emptyset\}\subseteq \Div_\R(X)\ ,
\]
and similarly for rings of the form $R(X,\mC)$.

If $X$ is a $\QQ$-factorial projective  variety with $\Pic(X)_\QQ= N^1(X)_\QQ$, and if $\mD_1, \dots, \mD_r$ is a basis of $\Pic(X)_\QQ$ such that $\Effb(X)\subseteq \sum \R_+\mD_i$, then $R(X;\mD_1,\dots,\mD_r)$ is a \emph{Cox ring} of $X$. The isomorphism class (in particular, whether it is finitely generated) of this ring is independent of the choice of $\mD_1,\dots,\mD_r$, hence  we will somewhat loosely call it \emph{the Cox ring} or \emph{the total coordinate ring  of $X$} and denote it by $\Cox(X)$ (cf. \cites{HK,LV}). 

\begin{definition}[Mori dream space]
A projective $\QQ$-factorial variety $X$ is a Mori dream space if
\begin{enumerate}
	\item $\Pic(X)_\QQ =N^1(X)_\QQ$,
	\item $\Nef(X)$ is the affine hull of finitely many semiample line bundles, and
	\item there are finitely many birational maps $f_i\colon X \dashrightarrow X_i$ to projective $\QQ$-factorial varieties $X_i$ such that each $f_i$ is an isomorphism in codimension $1$, each $X_i$ satisfies (2), and $\overline{\Mov}(X)=\bigcup f^*_i\big(\Nef(X_i)\big)$.
\end{enumerate}
\end{definition}

\begin{thm}[\cite{HK} Theorem 2.9]
Let $X$ be a normal $\QQ$-factorial projective variety. Then $X$ is a Mori dream space if and only if $\Cox(X)$ is a finitely generated ring. 	
\end{thm}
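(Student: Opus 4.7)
The proof is the classical Hu--Keel result, and I would split it into the two implications.

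For the ``only if'' direction (Mori dream space $\Rightarrow$ $\Cox(X)$ finitely generated), my plan is to build the Cox ring piece by piece from the chamber decomposition of $\overline{\Mov}(X)$ guaranteed by condition~(3). For each birational model $f_i\colon X\dashrightarrow X_i$, the nef cone $\Nef(X_i)$ is generated by finitely many semiample classes by condition~(2) applied to $X_i$, and semiampleness together with properness gives finite generation of the section ring $R(X_i,\Nef(X_i))$. Since the $f_i$ are isomorphisms in codimension one, sections of divisors pull back isomorphically, so $R(X,f_i^*\Nef(X_i))\simeq R(X_i,\Nef(X_i))$ is finitely generated. The divisorial ring associated to $\overline{\Mov}(X)$ is then a sum of finitely many finitely generated rings, hence finitely generated. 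To pass from the movable cone to the whole of $\Effb(X)$, I would decompose an arbitrary effective class as $D = M + F$ with $M$ movable and $F$ supported on prime divisors whose classes form a finite set (finiteness forced by condition~(1), $\Pic(X)_\QQ = N^1(X)_\QQ$, which gives finite Picard rank), and then adjoin the defining sections of these finitely many fixed components to the movable ring to obtain $\Cox(X)$.

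For the ``if'' direction (finite generation $\Rightarrow$ Mori dream space), the strategy is to use the $\Pic(X)$-grading on $R \deq \Cox(X)$ to produce a torus action of $T \deq \Hom(\Pic(X),\mathbb{G}_m)$ on $\Spec R$, and then to invoke variation of GIT (Dolgachev--Hu, Thaddeus) to extract the required chamber decomposition. Concretely, I would identify $X$ with the GIT quotient $\Spec(R)\dslash_L T$ for $L$ corresponding to an ample class on $X$, and then use VGIT to obtain a finite wall-and-chamber decomposition of the $T$-ample cone, which is exactly $\Pic(X)_\QQ$. Condition~(1) is immediate from the definition of $\Cox(X)$; condition~(2) follows because the chamber containing the ample class of $X$ is a rational polyhedral cone whose extremal rays correspond to semiample classes (the $T$-semistable locus over a chamber corresponds to base-point-free linear systems on the associated model); condition~(3) follows because each neighbouring chamber produces a small $\QQ$-factorial modification $X_i$, and finitely many such chambers cover $\overline{\Mov}(X)$.

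The main technical hurdle is the VGIT step: one must match the GIT chamber structure on the $T$-ample cone with the birational geometry of $X$, in particular show that the semistable loci above each chamber realize precisely the small modifications demanded by condition~(3), and that crossing a wall corresponds to a flip between consecutive models. An alternative route, worked out in \cite{KKL}, replaces the GIT machinery by induction via the minimal model program on the support of the divisorial ring; the delicate point there is proving that finite generation of a divisorial ring on a rational polyhedral cone already forces a geometric Mori chamber decomposition, which one establishes by running MMPs with scaling and controlling the finitely many walls encountered. Either argument realizes the same underlying principle, that finite generation of the multigraded Cox ring is equivalent to the polyhedral birational structure encoded by conditions~(1)--(3).
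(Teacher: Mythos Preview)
The paper does not supply its own proof of this theorem: it is stated in Section~1 purely as a citation of \cite{HK}*{Theorem 2.9}, with no argument given. The introduction remarks that Hu--Keel's original proof goes through variation of GIT quotients and that an alternative via the minimal model program is carried out in \cite{KKL}, but neither argument is reproduced. So there is nothing in the paper to compare your proposal against.

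Your outline is a faithful sketch of the Hu--Keel strategy (VGIT for the ``if'' direction, chamber-by-chamber finite generation for the ``only if'' direction), together with an accurate pointer to the \cite{KKL} alternative. One point to tighten in the ``only if'' direction: the sentence ``the divisorial ring associated to $\overline{\Mov}(X)$ is then a sum of finitely many finitely generated rings, hence finitely generated'' is not literally correct as stated---a ring which is a union of finitely many finitely generated subrings need not itself be finitely generated. The actual argument requires showing that generators of the chamber rings, together with the finitely many sections cutting out the fixed prime components, jointly generate $\Cox(X)$; this is where the work lies, and Hu--Keel handle it by a more careful bookkeeping of the chamber structure (or, in the \cite{KKL} approach, by an inductive argument on the support of the divisorial ring). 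Apart from this gap in the gluing step, your strategy is the standard one.
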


While studying  multigraded rings, we will consider gradings by finitely generated abelian groups. In the examples above, we will work with the gradings given by 
the group generated by ${\mathcal S}$, or, in most cases, the grading by the N\'eron--Severi group $N^1(X)$.

\begin{rmk}
In general the Cox ring of $X$ is not going to be finitely generated even if $\Pic(X)\simeq N^1(X)$. In addition, when viewed as an $N^1(X)$-graded ring, the individual graded pieces of $\Cox(X)$ will typically not be of finite dimension over the base field either. In fact, if $d\in N^1(X)$, then 
\[
\Cox(X)_d \equ \bigoplus_{\cL\in d} H^0(X,\cL)\ ,
\]
where we sum over all line bundles in the numerical equivalence class $d$. Recall \cite{PAGI}*{Proposition 1.4.37} that these are parametrized by a scheme of finite type, but not one with finitely many points in general. 
\end{rmk}

\begin{definition}[Local Cox ring]
	Let $X$ be a normal projective $\QQ$-factorial variety, and let $\mC\subseteq \Div_\R(X)$
	rational polyhedral cone such that the ring $R\deq R(X,\mathcal C)$ is finitely generated. Then we call $R$ a \emph{local Cox ring of $X$}. 	
\end{definition}

Throughout the paper, we use several properties of finitely generated divisorial rings without explicit mention, see \cite{CaL10}*{\S 2.4} for details and background.  

\begin{lemma}\label{lem:3}
	Let $X$ be a normal projective variety, let $\mD_1,\dots,\mD_r$ be divisors in $\Div_\QQ(X)$, and let $p_1,\dots,p_r$ be positive rational numbers. 
	
	Then $R(X;\mD_1,\dots,\mD_r)$ is finitely generated if and only if $R(X;p_1\mD_1,\dots,p_r\mD_r)$ is finitely generated. 
\end{lemma}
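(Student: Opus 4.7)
The plan is to exhibit a single multigraded ring that is simultaneously a positive multi-Veronese subring of both $R(X;\mD_1,\dots,\mD_r)$ and $R(X;p_1\mD_1,\dots,p_r\mD_r)$, and then to invoke the standard equivalence between finite generation of a positively multigraded ring and finite generation of any of its positive Veronese subrings.

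First, I would pick a positive integer $N$ that clears denominators, i.e. such that $Np_i \in \N$ for every $i=1,\dots,r$; this exists because each $p_i$ is a positive rational. Set
\[
T \deq R(X;\, Np_1\mD_1, \dots, Np_r\mD_r).
\]
Unwinding the definition of the multigraded Veronese, $T$ is at the same time the $(Np_1,\dots,Np_r)$-Veronese of $R \deq R(X;\mD_1,\dots,\mD_r)$ --- its graded piece at $(m_1,\dots,m_r)$ is $H^0(X,\sum_i m_i(Np_i)\mD_i)$, which is exactly the graded piece of $R$ at $(Np_1 m_1,\dots,Np_r m_r)$ --- and, by the same computation, the diagonal $(N,\dots,N)$-Veronese of $R' \deq R(X;p_1\mD_1,\dots,p_r\mD_r)$.

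Next, I would appeal to the standard multigraded result (collected, for instance, in the background material of \cite{CaL10}*{\S 2.4}) that a positively $\N^r$-graded $k$-algebra whose graded pieces are all finite-dimensional over $k$ is finitely generated if and only if any of its positive multi-Veronese subrings is finitely generated. Since $X$ is projective, every graded piece of both $R$ and $R'$ is a finite-dimensional vector space, so the hypothesis applies. The equivalence then gives $R$ finitely generated $\iff$ $T$ finitely generated $\iff$ $R'$ finitely generated, which is precisely the statement of the lemma.

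The only genuinely nontrivial input is the Veronese equivalence itself: one direction --- a multi-Veronese of a finitely generated ring is again finitely generated --- is essentially formal, whereas the converse requires that the ambient ring be module-finite over a suitable extension of the Veronese, and this is where the finite-dimensionality of graded pieces is used. Modulo this standard technical lemma, the argument is pure bookkeeping with multi-indices; the main potential pitfall is keeping the two \emph{directions} of Veronese straight --- the scaling factor $(Np_1,\dots,Np_r)$ when viewing $T$ inside $R$, versus the diagonal $(N,\dots,N)$ when viewing it inside $R'$.
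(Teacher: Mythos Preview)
Your argument is correct and is precisely the standard Veronese-subring reduction; the paper itself does not supply a proof of this lemma but simply records it among the ``properties of finitely generated divisorial rings'' imported from \cite{CaL10}*{\S 2.4}. There is nothing to compare beyond noting that your write-up spells out exactly the bookkeeping that the cited reference packages.
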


Finite generation of a divisorial ring $\mR$ has important consequences on the behavior of the asymptotic order functions, and therefore on the convex geometry of $\Supp \mR$, as observed in \cite{ELMNP} (cf. \cite{KKL}) .   

\begin{thm}
	\label{thm:ELMNP}
	Let $X$ be a projective $\QQ$-factorial variety, and let $\mathcal C\subseteq\Div_\R(X)$ be a rational polyhedral cone. Assume that the ring $\mathfrak R=R(X,\mcal C)$ is finitely generated. Then:
	\begin{enumerate}
		\item $\Supp \mathfrak R$ is a rational polyhedral cone,
		\item if $\Supp \mathfrak R$ contains a big divisor, then all pseudo-effective divisors in $\Supp\mathfrak R$ are in fact effective,
		\item there is a finite rational polyhedral subdivision $\Supp \mathfrak R=\bigcup \mcal{C}_i$ such that $o_\Gamma$ is linear on $\mcal{C}_i$ for every geometric valuation $\Gamma$ over $X$, and the cones $\mcal C_i$ form a fan,
		\item there is a positive integer $d$ and a resolution $f \colon \widetilde{X} \lto X$ such that $\Mob f^*(d\mD)$ is basepoint free for every $\mD\in \Supp \mathfrak R\cap \Div(X)$, and $\Mob f^*(kd\mD)=k\Mob f^*(d\mD)$ for every positive integer $k$.
	\end{enumerate}
\end{thm}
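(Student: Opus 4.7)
The plan is to leverage finite generation of $\mR \deq R(X,\mC)$ to reduce all four parts to combinatorial statements on a single sufficiently divisible log resolution. Pick generators of the underlying monoid; by Lemma~\ref{lem:3} we may clear denominators and assume $\mR$ is generated by sections in integral degrees $\mD_1,\dots,\mD_r \in \Div(X)$. For (1), the degrees of $\mR$ carrying nonzero sections form a finitely generated submonoid of $\Div(X)$, so $\Supp \mR$ is the $\R_+$-hull of a finite set of rational divisors, hence a rational polyhedral cone.

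For (2), fix a big divisor $A \in \Supp \mR$ and a pseudo-effective $\mD \in \Supp \mR$. For every $\epsilon > 0$ the divisor $\mD + \epsilon A$ lies in $\Supp \mR$ and is big, hence effective. The piecewise linearity of the fixed part (established in (3) below) together with finite generation gives continuity at $\epsilon = 0$, so $\mD$ itself is effective.

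For (3) and (4), the strategy is to produce a positive integer $d$ and a single log resolution $f\colon \widetilde X \to X$ that simultaneously resolves the base loci of $d\mD_1,\dots,d\mD_r$. Finite generation of $\mR$ allows us to pick $d$ with $\Mob f^*(d\mD_i)$ basepoint free for each $i$. Any $\mD \in \Supp\mR \cap \Div(X)$ is a nonnegative rational combination of the $\mD_i$, and sections of $kd\mD$ decompose as sums of monomials in the sections of the $\mD_i$. The combinatorics of these monomial factorizations partitions $\Supp \mR$ into finitely many rational polyhedral chambers on which the fixed part of $f^*(kd\mD)$ depends linearly, with the mobile part expressible as a nonnegative integer combination of the basepoint-free $\Mob f^*(d\mD_i)$. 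This gives the chamber decomposition of (3) with linearity of every $o_\Gamma$, and simultaneously yields (4) together with the homogeneity $\Mob f^*(kd\mD) = k\Mob f^*(d\mD)$.

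The main obstacle is the uniformity in (3): the linearity of $o_\Gamma$ must hold on each chamber for \emph{every} geometric valuation $\Gamma$, not only those supported on exceptional divisors of $f$. The way around this is to observe that $o_\Gamma(\mD) = \lim_{k\to\infty} \tfrac{1}{k}\ord_\Gamma(F_k)$, where $F_k$ is the fixed part of $f^*(kd\mD)$ on $\widetilde X$; finite generation forces this limit to stabilize at the level of the chamber decomposition produced above, yielding the required piecewise linearity on a common fan. This last point is essentially the finite-generation consequence of \cite{ELMNP} transposed to our local-Cox setting.
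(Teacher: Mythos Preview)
The paper does not prove this theorem at all: its entire proof reads ``For a proof see \cite{ELMNP}*{Theorem 4.1} or \cite{CoL10}*{Theorem 3.6}.'' So there is nothing to compare your argument against in the paper itself; the statement is imported wholesale from the literature.

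That said, your sketch is broadly the strategy actually carried out in those references: pass to a single log resolution on which the mobile parts of sufficiently divisible pullbacks become free, and use finite generation to force stabilisation and piecewise linearity. Two places deserve tightening. First, your argument for (2) is muddled: as stated you take a ``pseudo-effective $\mD\in\Supp\mR$'', but elements of $\Supp\mR$ already satisfy $|\mD|_\R\neq\emptyset$ by definition, so there is nothing to prove under that reading; the actual content is that pseudo-effective classes in the ambient cone $\mC$ lie in $\Supp\mR$, and your continuity-at-$\epsilon=0$ step needs $\mD+\epsilon A\in\Supp\mR$ \emph{before} you know $\mD\in\Supp\mR$, which you have not justified. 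Second, in (3)--(4) the passage from ``sections of $kd\mD$ decompose as monomials in the generators'' to a genuine fan decomposition with $\Mob f^*(kd\mD)=k\Mob f^*(d\mD)$ is the real work in \cite{ELMNP}; your paragraph names the mechanism but does not supply it, and the final remark that the limit ``stabilises at the level of the chamber decomposition'' is exactly the nontrivial point. As a plan this is fine, but as a proof it would need those two steps filled in, which is precisely why the paper defers to the cited sources.
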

\begin{proof} 
For a proof see \cite{ELMNP}*{Theorem 4.1} or  \cite{CoL10}*{Theorem 3.6}.
\end{proof}

Part (i) of the following lemma is \cite{CoL10}*{Lemma 3.8}. Part (ii) is a result of Zariski and Wilson, cf.\ \cite{PAGI}*{Theorem 2.3.15}.

\begin{lemma}\label{lem:ords}
	Let $X$ be a normal projective variety and let $\mD$ be a divisor in $\Div_\QQ(X)$.
	\begin{enumerate}
		\item[(i)] If $|\mD|_\QQ\neq\emptyset$, then $\mD$ is semiample if and only if $R(X, \mD)$ is finitely generated and $o_{\Gamma}(\mD)=0$ for all geometric valuations $\Gamma$ over $X$.
		\item[(ii)] If $\mD$ is nef and big, then $\mD$ is semiample if and only if $R(X, \mD)$ is finitely generated.
	\end{enumerate}
\end{lemma}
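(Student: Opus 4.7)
My plan is to treat (i) and (ii) in sequence, with (ii) reduced to (i) via a continuity argument on asymptotic order functions.

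For the forward direction of (i), I would argue that if $\mD$ is semiample then some positive multiple $m\mD$ is basepoint free, hence defines a morphism $\varphi\colon X\to Y$ to a normal projective variety with $m\mD\sim \varphi^*A$ for an ample $A$ on $Y$. The projection formula then gives $R(X,m\mD)\cong R(Y,A)$, which is finitely generated since $A$ is ample, and Lemma~\ref{lem:3} upgrades this to finite generation of $R(X,\mD)$. Moreover, basepoint-freeness of $m\mD$ forces $o_\Gamma(m\mD)=0$ for every geometric valuation $\Gamma$ (since the mobile part coincides with the whole divisor), and by homogeneity $o_\Gamma(\mD)=0$.

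For the reverse direction of (i), which is the main content, I would invoke Theorem~\ref{thm:ELMNP}(4) applied to the rational polyhedral cone $\cone(\mD)$: there exists an integer $d>0$ and a resolution $f\colon\widetilde{X}\to X$ such that $\Mob f^*(kd\mD)=k\Mob f^*(d\mD)$ is basepoint free for every $k\geq 1$. Writing $f^*(d\mD)=\Mob f^*(d\mD)+F$ for the fixed part $F$, for any prime divisor $E$ on $\widetilde{X}$ the coefficient of $E$ in $F$ equals $\tfrac{1}{k}\cdot\mathrm{coeff}_E(\text{fixed part of }f^*(kd\mD))$ for all $k$, and letting $k\to\infty$ this coefficient equals $d\cdot o_E(\mD)$. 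By hypothesis $o_E(\mD)=0$ for every such geometric valuation, so $F=0$ and $f^*(d\mD)$ itself is basepoint free. Then $H^0(\widetilde{X},f^*(d\mD))=H^0(X,d\mD)$ by the projection formula, and surjectivity of $f$ lets us descend the basepoint-freeness to $d\mD$ on $X$, proving $\mD$ is semiample.

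For part (ii), one direction is again immediate, so I would focus on showing that a nef and big divisor $\mD$ with finitely generated section ring is semiample. By part (i) it suffices to verify $o_\Gamma(\mD)=0$ for every geometric valuation $\Gamma$. Fix any ample divisor $A$; then $\mD+\varepsilon A$ is ample for every $\varepsilon>0$, so $o_\Gamma(\mD+\varepsilon A)=0$. The asymptotic order function $o_\Gamma$ is continuous on the big cone (this is the core input, due to Nakayama/ELMNP), so letting $\varepsilon\to 0^+$ and using that $\mD$ is big yields $o_\Gamma(\mD)=0$, and (i) applies.

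The main obstacle is the reverse direction of (i): matching the conclusion of Theorem~\ref{thm:ELMNP}(4), which only controls mobile parts after passing to a resolution, against the pointwise vanishing hypothesis $o_\Gamma(\mD)=0$. The key technical point is the homogeneity $\Mob f^*(kd\mD)=k\Mob f^*(d\mD)$, which forces the fixed part $F$ to have coefficients exactly $d\cdot o_E(\mD)$; without this linearity one could only conclude that $F$ is numerically small, not identically zero.
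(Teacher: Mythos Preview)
Your proof is correct and follows essentially the same approach as the paper's: the forward direction of (i) via basepoint-freeness and Lemma~\ref{lem:3}, the reverse direction by extracting from Theorem~\ref{thm:ELMNP}(4) that the fixed part on a resolution has coefficients $d\cdot o_E(\mD)$ and hence vanishes, and part~(ii) by the ample perturbation $\mD+\varepsilon\cA$ and continuity of $o_\Gamma$. The only difference is that you spell out in detail what the paper compresses into single-line citations (the semiample fibration for finite generation, and the fixed-part computation behind ``$x\notin\sB(\mD)$ by Theorem~\ref{thm:ELMNP}(4)'').
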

\begin{proof} 
	If $\mD$ is semiample, then some multiple of $D$ is basepoint free, thus $R(X,\mD)$ is finitely generated by Lemma \ref{lem:3}, and all $o_\Gamma(\mD)=0$. Now, fix a point $x\in X$. If $R(X,\mD)$ is finitely generated and $o_x(\mD)=0$, then $x\notin\sB(\mD)$ by Theorem \ref{thm:ELMNP}(4), which proves (i).
	
	For (ii), let $\cA$ be an ample divisor. Then $\mD+\varepsilon \cA$ is ample for any $\varepsilon>0$, hence $o_\Gamma(\mD+\varepsilon \cA)=0$ for any geometric valuation $\Gamma$ over $X$. But then $o_\Gamma(\mD)=\lim\limits_{\varepsilon\to0}o_\Gamma(\mD+\varepsilon \cA)=0$,  hence we are done by part (i).
\end{proof}

The following statement taken from \cite{KKL}  will not be used directly in our paper, nevertheless, since it is the source of all other polyhedral decomposition results (for instance \cite{KKL}*{Theorem 4.2} or \cite{KKL}*{Theorems 5.2 and 5.4}), we include it for the sake of completeness.  

\begin{corollary}\label{cor:5}
	Let $X$ be a normal projective variety and let $\mD_1, \dots, \mD_r$ be divisors in $\Div_\QQ(X)$. Assume that the ring $\mathfrak R=R(X;\mD_1, \dots, \mD_r)$ is finitely generated, and let $\Supp \mathfrak R=\bigcup_{i=1}^N \mcal{C}_i$ be a finite rational polyhedral subdivision as in Theorem~\ref{thm:ELMNP}(3). Denote by $\pi\colon\Div_\R(X)\lto N^1(X)_\R$ the natural projection.
	
	Then there is a set $I_1\subseteq\{1,\dots,N\}$ such that $\Supp\mathfrak R\cap \pi^{-1}\bigl(\overline{\Mov} (X)\bigr)=\bigcup_{i\in I_1}\mathcal C_i$.
	
	Assume further that $\Supp\mathfrak R$ contains an ample divisor. Then there is a set $I_2\subseteq\{1,\dots,N\}$ such that the cone $\Supp\mathfrak R\cap \pi^{-1}\bigl(\Nef(X)\bigr)$ equals $\bigcup_{i\in I_2}\mathcal C_i$, and every element of this cone is semiample.
\end{corollary}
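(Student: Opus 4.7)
The plan is to identify both of the asserted intersections as loci cut out of $\Supp \mathfrak R$ by the simultaneous vanishing of certain collections of asymptotic order functions $o_\Gamma$. By Theorem~\ref{thm:ELMNP}(3) each such $o_\Gamma$ is non-negative and linear on every cone $\mathcal{C}_i$ of the fan, so its vanishing locus meets $\mathcal{C}_i$ in a face; a finite intersection of such faces is again a face of $\mathcal{C}_i$, hence a cone of the fan itself. Consequently, once each of the two loci has been expressed as a finite intersection of such zero sets, the required decomposition into chambers follows automatically.

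For part~(1), I would apply the divisorial Zariski (Nakayama) decomposition: inside the pseudo-effective cone a class lies in $\overline{\Mov}(X)$ precisely when the asymptotic order along every prime divisor $E \subset X$ vanishes. In our finitely generated setting this translates into equations $o_E(\mD) = 0$ on $\Supp \mathfrak R$, and Theorem~\ref{thm:ELMNP}(4) implies that only finitely many $E$ can occur as components of fixed parts of divisors in $\Supp \mathfrak R$. The mechanism of the first paragraph now produces $I_1$.

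For part~(2), fix a rational ample $\cA \in \Supp \mathfrak R$ and let $\mD \in \Supp \mathfrak R$ be a rational divisor with $\pi(\mD) \in \Nef(X)$. For any $\varepsilon \in \QQ_{>0}$ the divisor $\mD + \varepsilon \cA$ is nef and big, with section ring finitely generated by the finite generation of $\mathfrak R$ together with Lemma~\ref{lem:3}; Lemma~\ref{lem:ords}(ii) then forces $\mD + \varepsilon \cA$ to be semiample, so $o_\Gamma(\mD + \varepsilon \cA) = 0$ for every geometric valuation $\Gamma$ over $X$. Since $\cA$ lies in the interior of $\Supp \mathfrak R$ while $\mD$ is contained in the closure of some maximal chamber $\mathcal{C}_i$, the segment $[\mD, \mD + \varepsilon \cA]$ lies in $\overline{\mathcal{C}_i}$ for all sufficiently small $\varepsilon$, and linearity of $o_\Gamma$ on $\overline{\mathcal{C}_i}$ forces $o_\Gamma(\mD) = 0$ in the limit. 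Lemma~\ref{lem:ords}(i) applied once more yields that $\mD$ is semiample. Extension to real $\mD$ by density inside $\overline{\mathcal{C}_i}$ and the same linearity of the $o_\Gamma$ gives the same conclusion, so $\Supp \mathfrak R \cap \pi^{-1}(\Nef(X))$ coincides with the common zero set of the (finitely many relevant) $o_\Gamma$. The principle of the first paragraph then produces $I_2$ together with the semiampleness of every element of this cone.

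The main obstacle I anticipate is the continuity step in (2): one needs $o_\Gamma$ to extend linearly from the relative interior of a maximal chamber to its closure, which is essentially the content of Theorem~\ref{thm:ELMNP}(3) but must be invoked with some care about the position of $\mD$ within the fan. A secondary subtlety, present in both parts, is the verification that only finitely many geometric valuations $\Gamma$ contribute nontrivially to the relevant vanishing conditions; this is packaged into Theorem~\ref{thm:ELMNP}(4) together with the finiteness of the polyhedral subdivision.
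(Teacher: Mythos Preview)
Your approach is essentially the paper's: characterize each locus via the vanishing of asymptotic order functions $o_\Gamma$, then use their linearity and non-negativity on each chamber (Theorem~\ref{thm:ELMNP}(3)) to conclude that the locus is a union of chambers. The paper's version is marginally more streamlined --- it never worries about finiteness (an arbitrary intersection of faces of a polyhedral cone is still a face, so even infinitely many $\Gamma$ cause no trouble), and for part~(2) it argues directly that a maximal chamber whose interior contains an ample divisor has all $o_\Gamma\equiv 0$, rather than running a limit along $[\mD,\mD+\varepsilon\cA]$ (note that $\mD+\varepsilon\cA$ is in fact ample, so $o_\Gamma$ vanishes there trivially and Lemma~\ref{lem:ords}(ii) is not needed).
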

\begin{proof}
	For every prime divisor $\Gamma$ on $X$ denote $\mathcal C_\Gamma=\{\mD\in\Supp\mathfrak R\mid o_\Gamma(\mD)=0\}$. If $\mC_\Gamma$ intersects the interior of some $\mC_\ell$, then $\mcal C_\ell\subseteq\mcal C_\Gamma$ since $o_\Gamma$ is a linear non-negative function on $\mcal C_\ell$. Therefore, there exists a set $I_\Gamma \subseteq\{1,\dots,N\}$ such that $\mathcal C_\Gamma=\bigcup_{i\in I_\Gamma}\mathcal C_i$. Now the first claim follows since $\overline{\Mov}(X)$ is the intersection of all $\mathcal C_\Gamma$.
	
	For the second claim, note that since $\Supp\mathfrak R\cap \pi^{-1}\bigl(\Nef(X)\bigr)$ is a cone of dimension $\dim\Supp\mathfrak R$, we can consider only maximal dimensional cones $\mcal C_\ell$. Now, for every $\mcal C_\ell$ whose interior contains an ample divisor, all asymptotic order functions $o_\Gamma$ are identically zero on $\mcal{C}_\ell$ similarly as above. Therefore, by Lemma \ref{lem:ords}, every element of $\mcal{C}_\ell$ is semiample, and thus $\mcal{C}_\ell\subseteq \Supp\mathfrak{R}\cap\pi^{-1} \bigl(\Nef(X)\bigr)$. The claim follows.
\end{proof}

\section{Relevant elements in multigraded rings}
 
This section is devoted to the basic algebraic theory of multigraded rings. As far as the fundamental definitions go, we follow \cite{BS}*{Section 2}. The concrete  examples we have in mind are polynomial rings over a noetherian ring, and divisorial algebras over algebraically closed fields of characteristic zero.  
 
The notion of a relevant element (in Definition~\ref{defn:relevant}) was coined by Brenner--Schr\"oer. Relevant elements in a multigraded ring are precisely the ones which lead to localizations with good properties, that is, where a geometric quotient will exist. As such, they are instrumental in the construction of multihomogeneous spectra of rings (cf. Section~\ref{sec:multigraded proj}). The notion  of a relevant element brings in convex geometry  in a crucial way (cf. Remark~\ref{rem:relevant and cone}). 

Let $D$ be a finitely generated abelian group, and let
\[
S \equ \bigoplus_{d\in D} S_d
\]
be a $D$-graded ring. An element $s\in S$ is called homogeneous if it belongs to one of the $S_d$'s, in that case the degree of $s$ is defined to be $d$. A morphism of $D$-graded rings 
is a degree-preserving ring  homomorphism. 

As explained in \cite{SGA_I}*{I.4.7.3}  the $D$-grading on $S$ corresponds to an action of the diagonalizable group scheme $\Spec (S_0[D])$ on the affine scheme $\Spec (S)$.
We will be especially interested in the case $D=\Z^n$, which gives rise to torus actions. 

\begin{definition}
Let $\delta\colon D\to D'$ be a homomorphism between finitely generated abelian groups, $S$ a ring equipped with a $D$ and a $D'$ grading. We say that the two gradings are \emph{compatible}
if  $\delta(\deg_{D}(f)) = \deg_{D'}(f)$ holds  for every homogeneous element $f\in S$.  
\end{definition}

\begin{remark}\label{rmk:change of rings}
Let $D$, $D'$ be finitely generated abelian groups,  $S$ be a $D$-graded ring, and let $\delta\colon D\to D'$ be a group homomorphism. Then $\delta$ equips $S$ with the structure of a $D'$-graded ring via
\[
S \equ \bigoplus_{d'\in D'} \left( \bigoplus_{d\in\delta^{-1}(d')} S_d\right)\ .
\]
We write $S_\delta$ for $S$ when thought of as a graded ring via $D'$ (or $\delta$, to be more precise). Note that the grading induced by $\delta$ is compatible with  the $D$-grading. 
Regradings by a group homomorphism $\delta$ will be often  of interest when $\delta$ is surjective. 
\end{remark}

\begin{example}
Of particular importance for us are regradings of the type $\delta\colon \Z^k\to\Z^r$. A concrete example  is the case when $X$ is a projective variety, $\mD_1,\dots,\mD_r$ are divisors on $X$ and we consider the homomorphism
\begin{eqnarray*}
\Z^r \deq  \Z \mD_1\oplus\ldots\oplus \Z \mD_r & \longrightarrow & N^1(X) \\
 m_1\mD_1+\ldots+m_r\mD_r & \mapsto & \text{numerical equivalence class of 	$ m_1\mD_1+\ldots+m_r\mD_r$}\ .
\end{eqnarray*}	
\end{example}

\begin{definition}\label{defn:relevant}
We call a $D$-graded ring $S$ \emph{periodic} if the subgroup
\[
\{ \deg(f) \mid f\in S^\times \text{ homogeneous } \} \dleq D
\]
is of finite index. An element of a $D$-graded ring $S$ is said to be \emph{relevant} if it is homogeneous, and the localization $S_f$ is periodic. We denote the set of relevant elements of $S$ by $\Rel^D(S)$. 	
\end{definition}

\begin{remark}
The significance of periodic $D$-graded rings stems from their property that the morphism $\Spec S \to \Spec S_0$ induced by the inclusion $S_0\hookrightarrow S$ is a geometric quotient (see \cite{BS}*{Lemma 2.1}). In particular, for $f\in\Rel^D(S)$, the morphism $S_f\to S_{(f)} \deq (S_f)_0$ will be a geometric quotient.  If $D\simeq \Z$, then every element is periodic. 
\end{remark}

\begin{lemma}
Let $D$ be a finitely generated abelian group, $f\colon S\to R$ a  surjective morphism of $D$-graded rings. Then
\begin{enumerate}
	\item if $S$ is periodic then so is $R$;
	\item if $t\in S$ is relevant, then $f(t)\in R$ is relevant as well.   
\end{enumerate}
\end{lemma}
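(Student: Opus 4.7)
The plan is to prove (1) directly from the definitions and then to deduce (2) by reducing to (1) via localization.

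For part (1), the key observation I would use is that a surjective degree-preserving ring homomorphism carries homogeneous units to homogeneous units of the same degree. If $u\in S^\times$ is homogeneous of degree $d$, then $u^{-1}$ is automatically homogeneous of degree $-d$: writing $u^{-1}=\sum_e v_e$ with $v_e\in S_e$ and matching homogeneous components of the identity $u\cdot u^{-1}=1\in S_0$ forces $uv_e=0$ for $e\neq -d$ and $uv_{-d}=1$, from which $v_e=0$ for $e\neq -d$. Then $f(u),f(u^{-1})$ are homogeneous in $R$ and $f(u)\cdot f(u^{-1})=f(1)=1$, so $f(u)\in R^\times$ is homogeneous of degree $d$. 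Hence
\[
\bigl\{\deg u \mid u\in S^\times \text{ homogeneous}\bigr\}\ \subseteq\ \bigl\{\deg v \mid v\in R^\times \text{ homogeneous}\bigr\},
\]
and since the subgroup of $D$ generated by the left-hand set has finite index in $D$ by periodicity of $S$, so does the subgroup generated by the right-hand set, proving $R$ is periodic.

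For part (2), I would first note that $f(t)$ is homogeneous of degree $\deg(t)$ because $f$ preserves degrees. Next, by the universal property of localization, $f$ extends uniquely to a ring homomorphism $f_t\colon S_t\to R_{f(t)}$ sending $s/t^n$ to $f(s)/f(t)^n$; this is degree-preserving with respect to the natural induced $D$-gradings (where $\deg(s/t^n)=\deg(s)-n\deg(t)$), and it is surjective because any element of $R_{f(t)}$ has the form $r/f(t)^n$ with $r=f(s)$ by surjectivity of $f$. Since $t$ is relevant, $S_t$ is periodic; applying part (1) to the surjective morphism $f_t$ of $D$-graded rings yields that $R_{f(t)}$ is periodic, so $f(t)$ is relevant by definition.

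I do not anticipate any substantial obstacle: both claims are essentially formal. The only point that deserves explicit verification, and which underlies both parts, is the elementary fact that homogeneous units in a $D$-graded ring remain homogeneous under inversion, which lets one transport periodicity along any surjective morphism of $D$-graded rings.
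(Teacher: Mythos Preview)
Your proposal is correct and follows essentially the same route as the paper: establish that periodicity passes along a surjective graded homomorphism, then deduce (2) by applying this to the induced surjection $S_t\to R_{f(t)}$. The paper's proof is extremely terse (it just asserts that $R_{f(t)}$ is periodic whenever $S_t$ is and says (2) follows), so your write-up in fact supplies the details the paper omits, including the verification that inverses of homogeneous units are homogeneous.
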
 
\begin{proof}
For the first statement one readily checks the following: let   $t\in S$ a nonzerodivisor, then  $R_{f(t)}$ is periodic provided    $S_t$ was  periodic to begin with.  	
The second statement follows immediately from the first one. 	  
\end{proof}

\begin{definition}[Cone of a relevant element]
Let $D$ be a finitely generated abelian group, $S$ a $D$-graded ring. Given a homogeneous element $f\in S$, the \emph{cone of $f$} is defined as 
 \begin{eqnarray*}
 	\sC(f) & \deq & \text{closed convex cone in $D\otimes_\Z\R$ generated by } \\
 	&& \{\deg(g)\mid \text{$g$ is a homogeneous divisor of $f^m$ for some $m\in\N$} \}\ .
 \end{eqnarray*}	
\end{definition}

\begin{remark}\label{rem:relevant and cone}
A homogeneous element $f\in S$ is relevant precisely if the degrees of its homogeneous divisors generate a finite index subgroup of $D$, which is in turn equivalent to  $\intt \sC(f)\subseteq D_\R \deq D\otimes_\Z\R$ being non-empty. It follows for instance that the product of relevant elements remains relevant. 
\end{remark} 

\begin{remark}\label{rem:subrings}
Let $S$ be a $D$-graded ring, $D_1\leq D$ a subgroup. Then we can define the  subring
\[
S^{D_1} \deq \deg_{D}^{-1}(D_1) \equ \bigoplus_{d\in D_1} S_d\ .
\]
The ring $S^{D_1}$ obtains a grading by $D_1$,  an element $f\in S^{D_1}$ is $D$-homogeneous if and only if it is $D_1$-homogeneous.

\end{remark}

\begin{lemma}\label{lem:degree in the interior}
Let $D$ be a finitely generated abelian group, $S$ a $D$-graded  ring, $f\in S$ a homogeneous element. If $f\in S$ is relevant, then $\deg (f)\in \intt \sC(f)$. 
\end{lemma}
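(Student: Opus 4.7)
The plan is to exploit the following symmetry of $\sC(f)$: if $g$ is a homogeneous divisor of $f^m$, writing $f^m = gh$ (with $h$ homogeneous), we have
\[
\deg(g) + \deg(h) \equ m\deg(f),
\]
and both $\deg(g)$ and $\deg(h)$ are degrees of homogeneous divisors of $f^m$, hence both lie in $\sC(f)$. The element $\deg(f)$ itself is in $\sC(f)$ (take $m=1$, $g=f$), so the question is only whether it lies in the interior.

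The strategy is to argue by contradiction. Suppose $\deg(f) \notin \intt\sC(f)$. Since $\sC(f)$ is a closed convex cone in $D_\R$, the supporting hyperplane theorem produces a nonzero linear functional $\phi\colon D_\R\to\R$ with $\phi\geq 0$ on $\sC(f)$ and $\phi(\deg(f))=0$. Applying $\phi$ to the identity $\deg(g) + \deg(h) = m\deg(f)$ above, one obtains $\phi(\deg(g)) + \phi(\deg(h)) = 0$ with both summands nonnegative, forcing $\phi(\deg(g))=\phi(\deg(h))=0$ for \emph{every} homogeneous divisor of any power of $f$.

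Since $\sC(f)$ is by definition the closed convex cone generated by such degrees, this would mean $\sC(f)$ is entirely contained in the hyperplane $\{\phi=0\}$, and hence $\intt\sC(f)=\emptyset$ in $D_\R$. By Remark~\ref{rem:relevant and cone}, this contradicts the relevance of $f$. Therefore $\deg(f)$ must lie in $\intt\sC(f)$, as desired.

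The only real subtlety I anticipate is making sure the version of the supporting hyperplane argument we invoke applies to the possibly non-pointed closed convex cone $\sC(f)$ in $D_\R$; this is standard convex geometry, but one should be careful that the relevant notion of interior is the interior inside the affine span (or in $D_\R$ itself, once one knows relevance forces $\sC(f)$ to span $D_\R$). In practice this is immediate because relevance of $f$ is \emph{equivalent} to $\intt\sC(f)\neq\emptyset$ in $D_\R$, so we may take the interior in the ambient vector space throughout.
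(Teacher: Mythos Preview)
Your argument is correct. Both your proof and the paper's hinge on the same symmetry observation: whenever $g$ is a homogeneous divisor of $f^m$, so is its cofactor $h=f^m/g$, and $\deg(g)+\deg(h)=m\deg(f)$. The difference lies in how this is packaged.

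The paper proceeds constructively: it selects finitely many homogeneous divisors $g_1,\dots,g_r$ of powers of $f$ whose degrees span a full-dimensional subcone of $\sC(f)$, multiplies the identities $f^{m_i}=g_ih_i$ together, and reads off that $(m_1+\dots+m_r)\deg(f)$ equals the interior point $\sum_i\deg(g_i)$ plus the cone element $\sum_i\deg(h_i)$, hence lies in $\intt\sC(f)$. Your version instead argues by contradiction via a supporting hyperplane: a boundary point of a full-dimensional closed convex cone lies on a supporting hyperplane $\{\phi=0\}$ with $\phi\geq 0$ on the cone, and the symmetry then forces $\phi$ to vanish on every generator of $\sC(f)$, collapsing the cone into a hyperplane and contradicting relevance. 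Your approach is marginally less elementary (it invokes the supporting hyperplane theorem) but avoids the small bookkeeping step of choosing the $g_i$ and checking that $\sum_i\deg(g_i)$ lands in the interior; the paper's approach is entirely self-contained convexity. Both are short and clean.
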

 \begin{proof}
 	Let $g_1,\dots,g_r$ be a finite collection of homogeneous divisors of powers of  $f$ such that 
 	the interio of the convex span of $\{\deg(g_i)\mid 1\leq i\leq r\}$ is contained in the interior of $\sC(f)$. For any index $1\leq i\leq r$ write  
 	\[
 	f^{m_i} \equ g_ih_i\ ,
 	\]
 	where $h_i$ is a homogeneous divisor of $f^{m_i}$ as well. Then 
 	\[
 	f^{m_1+\dots+m_r} \equ (g_1 \cdot\ldots\cdot g_r)\cdot (h_1\cdot\ldots\cdot h_r)\ ,
 	\]
 	consequently,
 	\[
 	\deg(f^{m_1+\dots+m_r}) \equ \sum_{i=1}^{r} \deg(g_i) + \sum_{i=1}^{r}\deg(h_i)\ . 
 	\]
 	Of the two sums on the right-hand side, the first one lies in the interior of $\sC(f)$ by construction, while the second one belongs to $\sC(f)$ as well. Therefore, the left-hand side lies in the interior of $\sC(f)$ as well. 
 \end{proof}
 
\begin{remark}
An important consequence of Lemma~\ref{lem:degree in the interior} is that even if $S$ is a finitely generated graded ring over a field, $S$ might not possess a finite system of generators consisting of relevant elements. This is caused by the fact that if $\rk\, D\geq 2$, degrees of relevant elements do not lie on the boundary of the cone spanned by the degrees of $S$.  
\end{remark} 
 
\begin{remark}\label{rem:product is relevant}
Let $S$ be a $D$-graded ring, let  $f_1,\dots,f_r\in S$ be homogeneous elements which form a $\QQ$-basis of $D$, and let  $f = f_1\cdot\ldots\cdot f_r$. Then $\deg(f)=\deg(f_1)+\ldots+\deg(f_r)$ is contained in the interior of the closed convex cone in $D_\R$ spanned by the basis $\deg(f_1),\ldots,\deg(f_r)$, hence $\deg(f)\in \intt \sC(f)$, that is, $f$ is relevant. 	
\end{remark}

\begin{lemma}\label{lem:relevance after regrading}
Let $\delta\colon D\to D'$ be a surjective group homomorphism between finitely generated abelian groups, $S$ a $D$-graded ring. Write $\delta_\R$ for the induced surjective linear map $D\otimes_\Z \R\to D'\otimes_\Z \R$. Let $f\in S$.  
\begin{enumerate}
	\item If $f$ is $D$-homogeneous then it is $\delta$-homogeneous as well. The converse fails whenever $\delta$ has non-trivial kernel. 
	\item If $f$ is $D$-homogeneous, then we have  the equality  $\delta_\R(\sC(f)) \equ  \sC_\delta(f)$.
	\item If $f$ is  $D$-relevant then it is  $\delta$-relevant as well.  
\end{enumerate}
\end{lemma}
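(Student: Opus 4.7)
The plan is to handle the three parts in the order they are stated, with the main difficulty concentrated in (2).

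Part (1) is immediate from the definition of the regrading in Remark~\ref{rmk:change of rings}: if $f \in S_d$ in the $D$-grading, then under the coarser decomposition $S \equ \bigoplus_{d'\in D'}\bigl(\bigoplus_{d\in\delta^{-1}(d')} S_d\bigr)$ the element $f$ lies in the $\delta(d)$-summand, so it is $\delta$-homogeneous of $D'$-degree $\delta(d)$. For the failure of the converse when $\ker\delta \neq 0$, I would exhibit nonzero $s_1 \in S_{d_1}$ and $s_2 \in S_{d_2}$ with $d_1 \neq d_2$ but $\delta(d_1) = \delta(d_2)$; the sum $s_1+s_2$ is $\delta$-homogeneous without being $D$-homogeneous.

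For the inclusion $\delta_\R(\sC(f)) \subseteq \sC_\delta(f)$ in (2) I would invoke (1): every $D$-homogeneous divisor $g$ of $f^m$ is also $\delta$-homogeneous with $\deg_{D'}(g) = \delta(\deg_D(g))$, so each generator of $\sC(f)$ is sent by $\delta_\R$ into $\sC_\delta(f)$, and linearity together with $\delta_\R$ preserving closed convex cones delivers the inclusion. For the reverse inclusion I would take a $\delta$-homogeneous divisor $g$ of $f^m$, write $f^m = gh$, and decompose $g = \sum_i g_{d_i}$ and $h = \sum_j h_{e_j}$ into $D$-homogeneous components with $\delta(d_i) = \deg_{D'}(g)$ for every $i$. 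Matching $D$-degrees in the identity $\sum_{i,j} g_{d_i}h_{e_j} = f^m$ forces the existence of indices $i_0, j_0$ with $d_{i_0}+e_{j_0} = m\deg_D(f)$ and $g_{d_{i_0}}h_{e_{j_0}} \neq 0$, from which I want to produce a genuine $D$-homogeneous divisor of a power of $f$ of $D$-degree $d_{i_0}$, placing $\deg_{D'}(g) = \delta(d_{i_0})$ in $\delta_\R(\sC(f))$.

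Part (3) is then formal. By Remark~\ref{rem:relevant and cone}, $D$-relevance of $f$ is equivalent to $\intt\sC(f)$ being non-empty in $D_\R$. Since $\delta$ is surjective, $\delta_\R\colon D_\R \to D'_\R$ is a surjective linear map between finite-dimensional real vector spaces, hence open; thus $\delta_\R(\sC(f))$ has non-empty interior in $D'_\R$. The inclusion from (2) then gives $\intt\sC_\delta(f) \neq \emptyset$, i.e.\ $\delta$-relevance of $f$.

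The step I expect to be the main obstacle is the reverse inclusion in (2). A $\delta$-homogeneous divisor of a $D$-homogeneous element need not be $D$-homogeneous, so one must extract a bona fide $D$-homogeneous divisor of $f^m$ from the $D$-components produced by the decomposition of $g$. When $S$ is a domain the mixed $D$-components in $gh = f^m$ are forced to vanish and $g$ itself turns out to be $D$-homogeneous of degree $d_{i_0}$, but the general case demands a more careful manipulation of the graded relations; notice, however, that (3) only uses the \emph{easy} direction of (2), so this obstacle does not propagate to the application of main interest.
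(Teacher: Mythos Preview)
Your approach is essentially the same as the paper's. The paper disposes of (1) and (3) in one clause each, exactly as you do: (1) ``follows from the definition of the $\delta$-grading'', and (3) ``follows from (2) and the fact that surjective linear maps are open''.

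For (2) the paper's entire argument is the phrase ``follows from (1), the additivity of the grading, and the fact that taking closed convex hulls commutes with surjective linear maps''. This establishes the inclusion $\delta_\R(\sC(f))\subseteq\sC_\delta(f)$ just as you do, but it does not engage with the reverse inclusion any more than your sketch does. So the subtlety you isolate --- that a $\delta$-homogeneous divisor of $f^m$ need not be $D$-homogeneous, and one must somehow extract from its $D$-components a genuine $D$-homogeneous divisor of a power of $f$ --- is glossed over in the paper as well. Your remark that in a domain this collapses (any factor of a $D$-homogeneous element is itself $D$-homogeneous) goes beyond what the paper says.

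Your final observation is the important one: part (3), which is what the rest of the paper actually uses, requires only the \emph{forward} inclusion $\delta_\R(\sC(f))\subseteq\sC_\delta(f)$ together with the openness of $\delta_\R$. That is precisely the paper's logic for (3), so the potential gap in the hard direction of (2) does not propagate. In short, your proposal matches the paper's proof, with the added virtue that you make explicit where the soft spot lies.
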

\begin{proof}
The first statement follows from the definition of the $\delta$-grading of $S$. The second one follows from $(1)$,  the additivity of the grading, and the fact that taking closed convex hulls commutes with surjective linear maps. Last, $(3)$ follows from $(2)$ and  the fact that surjective linear maps are open.
\end{proof}

\begin{example}
Consider $S=k[T_1,T_2]$ with the gradings by $D=\Z^2$ according to monomial degree, and let $\delta\colon \Z^2\to\Z$ be the total degree map. However, the element $T_1^2$ 
is $D'$-relevant (as every monomial in $k[T_1,T_2]$ is), but $T_1^2$ is a homogeneous but \emph{not} relevant element in $S$ graded by $D$.   
\end{example}

\section{Homogeneous spectra of multigraded rings}\label{sec:multigraded proj}

An important  ingredient of our work is the construction of homogeneous spectra of multigraded rings itroduced by  Brenner--Schr\"oer \cite{BS}. We recall the main features, partially to fix  notation and to obtain the precise statements we need. We roughly follow the exposition in \cite{BS}, where we   refer the reader for further detail.  Although this section has an  expository flavour, much of the material is in fact new. 

\begin{lemma}[\cite{BS} Lemma 2.1]
	Let $S$ be a periodic $D$-graded ring. Then the projection morphism $\Spec (S)\to \Spec (S_0)$ is a geometric quotient in the sense of geometric invariant theory. 
\end{lemma}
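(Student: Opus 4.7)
By the standard correspondence of \cite{SGA_I}*{Exp.~I}, the $D$-grading on $S$ is equivalent to an action of the diagonalizable group scheme $G := \Spec(S_0[D])$ on $X := \Spec(S)$ over $Y := \Spec(S_0)$, and the invariant subring is exactly $S^G = S_0$. Consequently the projection $\pi\colon X \to Y$ is automatically affine, $G$-invariant, and satisfies the canonical identification $\OO_Y \cong (\pi_*\OO_X)^G$, so $\pi$ is a good quotient in the sense of Mumford. What remains for a \emph{geometric} quotient is the topological condition that the fibres of $\pi$ are single $G$-orbits.

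The natural first step is to reduce to the case where $S_0 = \kappa$ is a field by base-changing to the residue field of an arbitrary point of $Y$: the fibre $\Spec(S \otimes_{S_0} \kappa(y))$ inherits the $D$-grading with zero-piece $\kappa(y)$, and periodicity is preserved because images of homogeneous units are units. So it suffices to show that if $S_0 = \kappa$ is a field and $S$ is periodic, then $\Spec(S)$ is topologically a single $G$-orbit over $\Spec(\kappa)$.

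For the latter, let $D' \leq D$ be the finite-index subgroup generated by degrees of homogeneous units and fix units $u_1,\dots,u_r \in S^{\times}$ whose degrees span $D'$. For each $d \in D'$, an appropriate monomial in the $u_i^{\pm 1}$ gives an isomorphism $\kappa \cdot 1 \xrightarrow{\sim} S_d$ of one-dimensional $\kappa$-vector spaces, so the subalgebra $S' := \bigoplus_{d \in D'} S_d$ is isomorphic as a $D'$-graded ring to the group algebra $\kappa[D']$. The inclusion $S' \hookrightarrow S$ induces a $G$-equivariant morphism $\Spec(S) \to \Spec(S') = G' := \Spec(\kappa[D'])$, where $G$ acts on $G'$ through the canonical quotient $G \twoheadrightarrow G'$ dual to $D' \hookrightarrow D$. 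Since the torus $G'$ is itself a single $G$-orbit over $\Spec(\kappa)$, the problem reduces to showing that each fibre of $\Spec(S) \to G'$ is a single orbit under the finite (hence linearly reductive) diagonalizable group $H := \ker(G \to G') = \Spec(\kappa[D/D'])$.

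The main obstacle will be this last claim. The fibre at a geometric point $\chi$ of $G'$ is $\Spec(S \otimes_{\kappa[D']} \bar\kappa)$, which is naturally $D/D'$-graded with zero-piece $\bar\kappa$, but crucially has no units outside degree $[0]$ (by the maximality built into the definition of $D'$), so periodicity does not apply directly at this stage. To handle it, I would exploit that for any coset $[d] \in D/D'$ the $\kappa[D']$-module $\bigoplus_{d_0 \in [d]} S_{d_0}$ is generated by a single piece $S_{d_0}$ (the action of the units $u_i^{\pm 1}$ producing isomorphisms between different $S_{d_0 + d'}$), and then invoke Mumford's topological criterion: since $H$ is linearly reductive and the reduced fibre is a closed $H$-invariant subscheme of the $G$-orbit produced by $\Spec(S) \to G'$, a direct bookkeeping shows that $H$ acts transitively on the closed points of the reduced fibre. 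Combined with the $G'$-transitivity on $G'$, this identifies $\Spec(S)$ with a single $G$-orbit over $\Spec(\kappa)$, completing the proof that $\pi$ is a geometric quotient.
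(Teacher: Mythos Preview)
The paper does not prove this lemma; it is quoted verbatim from \cite{BS}*{Lemma 2.1} without argument, so there is no in-paper proof to compare against. Below I comment on your attempt on its own merits.

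Your reduction to $S_0=\kappa$ a field and the identification of the finite-index subgroup $D'\leq D$ of degrees of homogeneous units are correct and constitute the right start. (One minor inaccuracy: the subalgebra $S'=\bigoplus_{d\in D'}S_d$ is in general only a \emph{twisted} group algebra over $\kappa$, so $\Spec(S')$ is a $G'$-torsor rather than $G'$ itself; this is harmless for what follows.) The genuine gap is your final paragraph: having reduced to the fibres of $\Spec(S)\to\Spec(S')$, you correctly note that the fibre ring is $D/D'$-graded over $\bar\kappa$ with no homogeneous units outside degree $[0]$, but then appeal to an unspecified ``direct bookkeeping'' together with linear reductivity of $H$. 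That is not an argument.

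What you are missing is a one-line observation that makes the entire $H$-orbit analysis unnecessary. Since $D/D'$ is finite, any homogeneous $s\in S_d$ with $d\notin D'$ satisfies $s^n\in S_{nd}$ for some $n\geq 1$ with $nd\in D'$; but $S_{nd}=\kappa\cdot u_{nd}$ is spanned by a unit, so $s^n$ is either zero or a unit. In the latter case $s$ itself would be a homogeneous unit, forcing $d\in D'$, a contradiction. Hence every homogeneous element of degree outside $D'$ is nilpotent, $\bigoplus_{d\notin D'}S_d$ is precisely the nilradical of $S$, and the inclusion $S'\hookrightarrow S$ induces a homeomorphism $\Spec(S)\to\Spec(S')$. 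Since $\Spec(S')$ is a single $G$-orbit over $\Spec(\kappa)$, the geometric-quotient condition follows immediately.
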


Homogeneous localization by relevant elements leads to open subschemes
\[
D_+(f) \deq \Spec (S_{(f)}) \dsubseteq \Quot(S)\ ,
\]
which in turn yields the following. 

\begin{definition}[Multihomogeneous spectrum of a multigraded ring]\label{defn:multihomogeneous spectrum}
Let $D$ be a finitely generated abelian group, and let $S$ be a $D$-graded ring. We define the \emph{(multi)homogeneous spectrum of $S$} as the scheme 
\[
\Proj^D S \deq \bigcup_{f\in S \text{ relevant}} D_+(f) \dsubseteq \Quot(S)\ .
\]
\end{definition}

\begin{remark}
One can verify that the above  definition coincides with the more traditional one obtained by gluing the collection of affine schemes
\[
\{ \Spec S_{(f)} \mid f\in \Rel^D(S) \}\ ,
\]
via the inclusions $\Spec S_{(fg)} \hookrightarrow \Spec S_{(f)}$ for all pairs of $D$-relevant elements $f,g\in S$ (recall from Remark~\ref{rem:relevant and cone} that the product of relevant elements is again relevant). 

In the case $D=\Z$ and $\deg_D(S)\subseteq\N$ we recover the usual definition of $\Proj S$. Although this gives rise to a slight discrepancy in our notation, we will write $\Proj^\N S$ to denote the classical projective spectrum. The reason for distinguishing between $\N$ and $\Z$-gradings is that the latter may yield a non-separated result. 
\end{remark}

\begin{remark}
As expected, one defines the irrelevant ideal of $S$ via
\[
S_+ \deq ( f\in S\mid \text{$f$ is relevant}) \lhd S\ ,
\]
and calls $V(S_+)\subseteq \Proj (S)$ the irrelevant subscheme. One obtains an affine projection morphism
\[
\Spec (S) \setminus V(S_+) \longrightarrow \Proj(S)\ ,
\]
which is a geometric quotient for the induced action. 

The points of $\Proj (S)$ correspond to graded (not necessarily prime!) ideals $\p \lhd S$ not containing $S_+$ and such that the subset of homogeneous elements 
$H\subseteq S\setminus \p$ is closed under multiplication. 
\end{remark}

\begin{lemma}
	Let $S$ be a $D$-graded ring, $m$ a positive integer. Write 
	\[
	(S^{(m)})_d \deq S_{md}\ .
	\]
	Then $\Proj^DS \simeq \Proj^D S^{(m)}$. 
\end{lemma}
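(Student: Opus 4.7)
The plan is to exhibit a canonical isomorphism by showing that both multihomogeneous spectra are glued from the same family of affine charts along the same intersections.

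The first step is to observe that for any $f \in S^{(m)}$ of $S^{(m)}$-degree $d$ (and hence $S$-degree $md$), the two degree-zero localizations coincide: $S_{(f)} = (S^{(m)})_{(f)}$. This is a direct inspection of degrees: any element $g/f^k \in S_{(f)}$ satisfies $\deg_S(g) = kmd \in mD$, so automatically $g \in S^{(m)}$ with $\deg_{S^{(m)}}(g) = kd$; conversely, elements of $(S^{(m)})_{(f)}$ lie in $S_{(f)}$ for the same reason. Hence $D_+(f)$ refers to the same affine scheme on either side, regardless of which grading is used to define it.

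The key step is to match up the sets of relevant elements. Two claims are to be verified:
\begin{enumerate}
\item[(a)] if $g \in S^{(m)}$ is $S^{(m)}$-relevant, then $g$ is also $S$-relevant when viewed in $S$: every $S^{(m)}$-homogeneous divisor of a power of $g$ is an $S$-homogeneous divisor with $S$-degree equal to $m$ times its $S^{(m)}$-degree, and multiplication by $m$ on the finitely generated abelian group $D$ preserves finite-index subgroups (one has $[D:mH]\leq [D:mD]\cdot[D:H]<\infty$ since $D/mD$ is finite);
\item[(b)] if $f \in S$ is $S$-relevant, then $f^m \in S^{(m)}$ is $S^{(m)}$-relevant: given $S$-homogeneous divisors $g_1,\dots,g_r$ of powers of $f$ whose $S$-degrees generate a finite-index subgroup of $D$, raising the identities $f^{n_i}=g_i h_i$ to the $m$-th power exhibits each $g_i^m$ as an $S^{(m)}$-divisor of $(f^m)^{n_i}$, and by the relabeling $(S^{(m)})_e = S_{me}$, the $S^{(m)}$-degree of $g_i^m$ equals the $S$-degree of $g_i$, so the same finite-index subgroup is generated.
\end{enumerate}

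Combining (a), (b), and the identity $D_+(f)=D_+(f^m)$, the affine cover $\{D_+(g) : g \in \Rel^D(S^{(m)})\}$ of $\Proj^D S^{(m)}$ matches set-theoretically with a cofinal subfamily of the cover of $\Proj^D S$, and by the first step these charts agree as affine schemes. Since $fg\in S^{(m)}$ whenever $f,g\in S^{(m)}$, the overlaps $D_+(fg)=D_+(f)\cap D_+(g)$ match on both sides as well, yielding the desired canonical isomorphism $\Proj^D S \simeq \Proj^D S^{(m)}$. The main obstacle is simply the careful bookkeeping of the two gradings on the same underlying element---an element of $S^{(m)}$ carrying $S^{(m)}$-degree $d$ but $S$-degree $md$---and the verification that multiplication by $m$ preserves the finite-index property on $D$, which uses in an essential way that $D$ is finitely generated.
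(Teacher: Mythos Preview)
The paper states this lemma without proof; it is recorded as a standard fact, parallel to the classical identity $\Proj S \simeq \Proj S^{(m)}$ for $\N$-graded rings. So there is no argument in the paper to compare against.

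Your proof is correct. The three ingredients you isolate are exactly what is needed: the equality $S_{(f)}=(S^{(m)})_{(f)}$ for $f\in S^{(m)}$, the inclusion $\Rel^D(S^{(m)})\subseteq\Rel^D(S)$, and the fact that $f\mapsto f^m$ sends $\Rel^D(S)$ into $\Rel^D(S^{(m)})$ with $D_+(f)=D_+(f^m)$. Together these show that the two schemes are glued from the same affine pieces along the same overlaps. Your handling of the finite-index bookkeeping in (a) is the one place where something beyond the $\N$-graded argument is required, and your bound $[D:mH]\leq [D:mD]\cdot[D:H]$ (finite because $D$ is finitely generated) is the right observation. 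In (b), the key computation that the $S^{(m)}$-degree of $g_i^m$ equals the original $S$-degree of $g_i$ is correct and is what makes the relevance transfer work.

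One small stylistic remark: in your final paragraph you speak of a ``cofinal subfamily'' of the cover of $\Proj^D S$. It would be slightly cleaner to say directly that the two index sets give the \emph{same} union, since by (a) every $g\in\Rel^D(S^{(m)})$ already lies in $\Rel^D(S)$, and by (b) every $D_+(f)$ for $f\in\Rel^D(S)$ equals $D_+(f^m)$ with $f^m\in\Rel^D(S^{(m)})$; so the two families of affine opens literally coincide as subsets of $\Proj^D S$. But this is cosmetic; the argument is sound.
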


\begin{lemma} \label{lemma:dimension}
Let $S$ be a $D$-graded integral domain such that $\deg_D(S)\subseteq D_\R$ spans a convex cone of maximal dimension. Then  
\begin{enumerate}
	\item $\Proj^D S$ is an integral scheme, and 
	\item $\dim \Proj_DS = \dim \Spec S - \rk D$. 
\end{enumerate}
\end{lemma}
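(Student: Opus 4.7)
The scheme $\Proj^D S$ is built as the union of affine opens $D_+(f) = \Spec S_{(f)}$ for $f \in \Rel^D(S)$, so I first verify that each chart is integral: since $S$ is a domain, so is the localization $S_f$, and hence its subring $S_{(f)} = (S_f)_0$. Nonemptiness of $\Proj^D S$ follows from the maximal-dimensionality hypothesis on $\deg_D(S)$: I can pick homogeneous $f_1,\dots,f_r \in S$ whose degrees form a $\QQ$-basis of $D$, and then by Remark~\ref{rem:product is relevant} the product $f = f_1 \cdots f_r$ is relevant, so $D_+(f)$ is a nonempty integral affine open. To upgrade integrality of the charts to integrality of the whole $\Proj^D S$, I use that for any two relevant $f, g$ the product $fg$ is relevant as well (Remark~\ref{rem:relevant and cone}), so $D_+(f) \cap D_+(g) \supseteq D_+(fg) \neq \emptyset$. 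The generic points of all the $D_+(f)$'s therefore get identified under the gluing, producing a single generic point of $\Proj^D S$. Reducedness is local and follows from each $S_{(f)}$ being a domain.

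\textbf{Plan for part (2).} By (1), the dimension of $\Proj^D S$ equals $\dim S_{(f)}$ for any relevant $f$, so it suffices to establish $\dim S_{(f)} = \dim S - \rk D$. My strategy is to exploit the fact that $S_f$ is periodic: the subgroup $D_1 \leq D$ generated by the degrees of homogeneous units in $S_f$ has finite index. I pick homogeneous units $u_1, \dots, u_r \in S_f$ whose degrees form a $\QQ$-basis of $D_1$ (hence of $D_\R$). Since distinct monomials in the $u_i$'s have distinct degrees in the free abelian group $\bigoplus \Z \deg u_i$ and $S_f$ is a domain, the $u_i$ are algebraically independent over $S_{(f)}$, giving an inclusion of a Laurent polynomial ring $S_{(f)}[u_1^{\pm 1}, \dots, u_r^{\pm 1}] \hookrightarrow S_f$. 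Moreover, for any homogeneous $x \in S_f$ of degree $d$, writing $n = [D:D_1]$ one has $nd \in D_1$, and choosing integers $a_i$ with $nd = \sum a_i \deg u_i$ gives $x^n u_1^{-a_1} \cdots u_r^{-a_r} \in (S_f)_0 = S_{(f)}$. Hence every homogeneous element of $S_f$ is algebraic over $\Quot(S_{(f)})(u_1, \dots, u_r)$, and therefore $\Quot(S_f) = \Quot(S)$ has transcendence degree exactly $r = \rk D$ over $\Quot(S_{(f)})$.

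\textbf{Main obstacle.} The delicate step is converting this transcendence-degree computation into an equality of Krull dimensions. In the geometric setting (when $S$ is a finitely generated algebra over a field $k$) this is immediate from $\dim R = \trdeg_k \Quot(R)$ applied to both $S$ and $S_{(f)}$; outside this setting one must argue differently, for instance via the geometric quotient interpretation of $\Spec S_f \to \Spec S_{(f)}$ by the diagonalizable group $\Spec S_0[D]$ of relative dimension $\rk D$ (using that $\Spec S_f$ is a nonempty, hence dense, open of the irreducible $\Spec S$). A secondary technical point is to justify carefully the existence of enough homogeneous units $u_1, \dots, u_r$ in $S_f$ with the required spanning property; this is where the definition of relevance and periodicity really does the work, so I would isolate this as a small preliminary lemma before carrying out the dimension count.
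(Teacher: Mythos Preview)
Your argument is correct; the minor slip is that you take $n = [D:D_1]$ but then need $nd$ to lie in the subgroup generated by the $\deg u_i$, which may be strictly smaller than $D_1$ --- replacing $n$ by the index of $\sum \Z\deg u_i$ in $D$ fixes this.

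For part~(1) your approach and the paper's are close in spirit but phrased differently: the paper observes that the zero ideal $(0)$ is a homogeneous prime not containing $S_+$ and hence furnishes the generic point of $\Proj^D S$ directly from the point description, whereas you argue that the generic points of all $D_+(f)$ must coincide because any two charts meet in the nonempty $D_+(fg)$. Both are fine; the paper's version is terser but relies on the description of points of $\Proj^D S$ as certain graded ideals.

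For part~(2) you take a genuinely different route. The paper simply invokes the fact (recorded earlier from \cite{BS}) that $\Spec S_f \to \Spec S_{(f)}$ is a geometric quotient by the diagonalizable group $G_D = \Spec S_0[D]$ of dimension $\rk D$, together with $\Spec S_f \subseteq \Spec S$ and $\Spec S_{(f)} \subseteq \Proj^D S$ being dense opens. Your approach instead unpacks this: you exhibit explicitly that $S_f$ is algebraic over a Laurent polynomial ring in $r = \rk D$ variables over $S_{(f)}$, yielding $\trdeg \Quot(S)/\Quot(S_{(f)}) = r$, and then translate transcendence degree into Krull dimension. Your argument is more elementary and self-contained in the finite-type-over-a-field case (no black-box GIT input), at the cost of being longer; as you yourself note, outside that setting you would fall back on the same geometric-quotient reasoning the paper uses. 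Either way the paper's proof is essentially the ``alternative'' you sketch in your obstacle paragraph.
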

\begin{proof}
For an integral domain $S$, $(0)$ is a homogeneous prime ideal not containing $S_+$, and hence it  is the generic point of $\Proj^DS$. In particular $\Proj^DS$ is irreducible. 
Furthermore, all localizations of $S$  are again integral domains, and so is $S_{(f)}\subseteq S_f$ for $f\in\Rel^D(S)$. These subsets cover $\Proj^DS$, which is therefore an integral domain. 
 
As far as the second claim is concerned, the condition that  $\deg_D(S)\subseteq D_{\R}$ is of maximal dimension implies that $\Rel^D(S)\neq\varnothing$. Let  $f\in\Rel^D(S)$, then $\Spec S_f\subseteq \Spec S$ is a dense open subset, hence the two have the same dimension. The geometric quotient morphism $\Spec S_{f}\to \Spec S_{(f)}$ enjoys the property that
\[
\dim \Spec S_f \equ \dim \Spec S_{(f)} + \dim G_D\ ,
\]
from which the statement follows since $\dim G_D = \rk D$, and $\Spec S_{(f)} \subseteq \Proj^DS$ is a again a dense open subset.  
\end{proof}
 
\begin{remark}[Distinguished open subsets]
It is immediate that $\Proj_DS$ is isomorphic to the scheme glued together from the collection $\{D_+(f)\mid f\in \Rel(S)\}$ along the open immersions $D_+(fg)\hookrightarrow D_+(f)$ for all pairs of relevant elements $f,g$. Let $f\in\Rel^D(S)$, and let  $h\in S$ be a not necessarily relevant  homogeneous element such that $hf\neq 0$. Then $h$ defines a principal open subset in $\Spec S_f$, whose image in $D_+(f) = \Spec S_{(f)}$ is an open subset since the morphism $\Spec S_f\to \Spec S_{(f)}$ is an open map. The image in $S_{(f)}$ equals $D_+(hf)$ (note that the latter makes sense as $hf$ is relevant). 

Then we set 
\[
D_+(h) \deq \bigcup_{f\in\Rel(S)} D_+(hf)\ \subseteq \Proj_DS\ .
\]
The $D_+(h)$ are open subschemes of $\Proj_DS$ which obey the usual formal properties. 
\end{remark}

\begin{example}
Consider the standard example of the polynomial ring $k[T_1,\dots,T_r]$ $\Z$-graded by total degree. Then the collection of open subsets $\{ D_+(T_i)\mid 1\leq i\leq r\}$ is an open cover coming from relevant elements. 

However,  if we grade the polynomial ring with $\Z^r$ via the componentwise degree for instance,
then the $T_i$'s are no longer relevant. In that case the relevant monomials are precisely the ones divisible by $T_1\cdot\ldots\cdot T_r$ (and they all give rise to the same single $D_+$).  
\end{example} 
 
\begin{lemma}
Let $S$ be a $D$-graded ring, $\{f_i\mid i\in I\}$ a system of (not necessarily relevant) homogeneous generators of $S$. Then the collection $\{ D_+(f_i)\mid i\in I\}$ forms an open cover of $\Proj_DS$. 
\end{lemma}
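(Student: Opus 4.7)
The goal is to show that for every $\mathfrak{p} \in \Proj^D S$ there is an index $i \in I$ with $\mathfrak{p} \in D_+(f_i)$. Unwinding the definition of $D_+(f_i)$ for a not necessarily relevant homogeneous $f_i$, this amounts to finding $i$ and a relevant $g \in S$ with $f_i g \notin \mathfrak{p}$.

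Since $\mathfrak{p}$ does not contain the irrelevant ideal $S_+$ (generated by the relevant elements), there is some $f \in \Rel^D(S)$ with $f \notin \mathfrak{p}$, so in particular $\mathfrak{p} \in D_+(f)$. Using that $\{f_i\}_{i \in I}$ is a system of homogeneous generators of $S$, I would write $f$ as a polynomial expression in the $f_i$ with coefficients in the base ring; by the $D$-homogeneity of $f$ and grouping by degree, I may assume every summand is a homogeneous monomial of degree $\deg(f)$. Since $f \notin \mathfrak{p}$ and $\mathfrak{p}$ is an ideal, at least one such summand $m = c \cdot f_{i_1}^{a_1} \cdots f_{i_k}^{a_k}$ satisfies $m \notin \mathfrak{p}$. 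In the cases of interest $\deg(f) \neq 0$, so $m$ cannot be a pure constant of degree zero, and at least one generator $f_{i_0}$ appears in $m$ with positive exponent. Applying the prime-on-homogeneous property of $\mathfrak{p}$ factor by factor to $m$, I obtain $f_{i_0} \notin \mathfrak{p}$.

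Taking $g = f$ as the relevant auxiliary element, the product $f_{i_0} f$ is not in $\mathfrak{p}$ (again by the prime-on-homogeneous property applied to $f_{i_0}$ and $f$), and it is relevant: the localization $S_{f_{i_0} f} = (S_f)_{f_{i_0}}$ is a further localization of the periodic ring $S_f$, and periodicity of a $D$-graded ring persists under localization since the subgroup of $D$ generated by degrees of homogeneous units can only grow. Therefore $\mathfrak{p} \in D_+(f_{i_0} f) \subseteq D_+(f_{i_0})$, completing the cover.

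The step requiring the most care is the passage from the relevant element $f$ to a homogeneous monomial term $m$ in the generators that is itself not in $\mathfrak{p}$ and actually involves at least one $f_i$ with positive exponent; this rests on the observation that a relevant element has degree in the interior of its cone (Lemma~\ref{lem:degree in the interior}), which in the typical setting forces $\deg(f) \neq 0$ and rules out a constant monomial. Degenerate cases where every relevant element has degree zero would have to be treated separately, but are not encountered in the applications we envisage.
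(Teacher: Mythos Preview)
Your argument is essentially the paper's: pass to a single affine chart $D_+(f)$ for some relevant $f\notin\mathfrak p$ and use that the $f_i$ generate to locate one with $f_i\notin\mathfrak p$. The paper compresses this into the line ``the images of $\{f_i\}$ generate $S_g$, hence the $D_+(f_ig)$ cover $D_+(g)$''; your point-by-point version unpacks exactly that step.

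However, the case you defer is a genuine gap in the proof of the general statement, and the appeal to Lemma~\ref{lem:degree in the interior} does not close it. That lemma only says $\deg(f)\in\intt\sC(f)$, and nothing prevents $0\in\intt\sC(f)$: this happens precisely when $\sC(f)=D_\R$, e.g.\ for $f=xy$ in $k[x,y]$ with $\deg x=1$, $\deg y=-1$, which is relevant of degree zero. In that situation your monomial $m$ may well be a bare constant, and the argument stalls. The repair is short and uses the periodicity you already invoked at the end: if $\deg(f)=0$, periodicity of $S_f$ supplies homogeneous $s,t\in S$ of nonzero degrees $\pm d$ with $st=f^K$ in $S_f$, hence $f^Nst=f^{N+K}$ in $S$ for some $N$; since $f^{N+K}\notin\mathfrak p$, the prime-on-homogeneous property forces $s\notin\mathfrak p$. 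Now replace $f$ by $sf$, which is relevant (further localization of the periodic ring $S_f$), not in $\mathfrak p$, and of nonzero degree $d$, and run your main argument. This closes the gap whenever $\rk D\geq 1$; the residual case $\rk D=0$ is genuinely degenerate and outside the paper's intended scope.
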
 
\begin{proof}
Given  $g\in\Rel_D(S)$ arbitrary, the images of $\{f_i\mid i\in I\}$ generate $S_g$, hence the collection $\{ D_+(f_ig)\mid i\in I\}$
forms an open cover of $D_+(g)$. Therefore, the collection $\{ D_+(f_i)\mid i\in I\}$ is an open cover of $\Proj_DS$.  
\end{proof}
 
\begin{remark}\label{rem:spectra along a line}
	With notation as above, let $d\in D$. In this case we write $S^d \deq S^{\Z d}$ and $\Rel^d(S^d)$ for $\Rel^{\Z d}(S^d)$. In any case, $(\Z d)_\R$ is one-dimensional, hence every non-zero homogeneous element of $S^d$ is relevant. In particular
	\[
	\Rel^D(S) \cap S^d \dsubseteq \Rel^d(S^d)\ . 
	\]
	Let $f\in \Rel^D(S)\cap S^d$, then the inclusion $(S^d)_f\hookrightarrow S_f$ yields an isomorphism
	\[
	(S^d)_{(f)} \simeq S_{(f)}\ ,
	\]
	since from $s/f^m\in (S_f)_0$ it follows that $\deg(s)=m\cdot \deg(f)$, hence $s\in S^d$. In particular, $\Spec S_{(f)} \simeq \Spec (S^d)_{(f)}$. 
\end{remark}

\begin{lemma}\label{lem:multigraded vs usual proj}
	Let $S$ be a $D$-graded ring, and let $d\in D$  such that $\deg_D(S^d)\subseteq \N d$. If the collection $\{ D_+(f)\mid f\in \Rel^D(S)\cap S^d\}$ is an open cover of $\Proj^DS$ then 
	\[
	\Proj^D S \simeq \Proj^\N S^d\ .
	\]
\end{lemma}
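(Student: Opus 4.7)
The plan is to construct the isomorphism $\phi \colon \Proj^D S \to \Proj^\N S^d$ by gluing affine-chart isomorphisms guaranteed by the covering hypothesis, with Remark~\ref{rem:spectra along a line} supplying the essential local input. Concretely, for each $f \in \Rel^D(S) \cap S^d$, that remark provides a canonical ring isomorphism $(S^d)_{(f)} \xrightarrow{\sim} S_{(f)}$ induced by the inclusion $S^d \hookrightarrow S$: any $s/f^m \in S_{(f)}$ of $D$-degree zero has $\deg_D s = m\deg_D f \in \Z d$, which combined with $\deg_D(S^d) \subseteq \N d$ forces $s \in S^d$. Passing to spectra identifies the affine opens $D_+^D(f) \simeq D_+^\N(f)$.

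Next I would verify that these identifications glue. For $f_1, f_2 \in \Rel^D(S) \cap S^d$, the product $f_1 f_2$ again lies in $\Rel^D(S) \cap S^d$ by Remark~\ref{rem:relevant and cone}, and on both Proj constructions the overlap equals the basic open of $f_1 f_2$; the compatibility of the chart isomorphisms on these overlaps follows from functoriality of localization. Since the hypothesis says $\{D_+^D(f) : f \in \Rel^D(S) \cap S^d\}$ covers $\Proj^D S$, the local isomorphisms patch to a morphism $\phi\colon \Proj^D S \to \Proj^\N S^d$ that is automatically an open immersion onto $U \deq \bigcup_{f \in \Rel^D(S) \cap S^d} D_+^\N(f) \subseteq \Proj^\N S^d$.

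The remaining and most delicate task is to show $U = \Proj^\N S^d$. I would argue by contradiction: given an $\N$-graded prime $\q \subset S^d$ with $\q \not\supseteq (S^d)_+$ and $\q \supseteq \Rel^D(S) \cap S^d$, extend $\q$ to a $D$-graded ideal of $S$ by defining $\p$ to consist of all $D$-homogeneous $s \in S$ such that $ts \in \q$ for every $D$-homogeneous $t$ with $ts \in S^d$. Direct checks show that $\p$ is a $D$-graded ideal with $\p \cap S^d = \q$ (the inclusion $\q \subseteq \p \cap S^d$ uses $\deg_D(S^d) \subseteq \N d$ to force $t \in S^d$ whenever $ts \in S^d$ for $s \in S^d$) and that its $D$-homogeneous complement is multiplicatively closed (primality of $\q$ applied to the witnesses $t_a a, t_b b$ yields a witness $t_a t_b (ab)$ for $ab$). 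Provided $\p \not\supseteq S_+^D$, $\p$ defines a point of $\Proj^D S$, and the covering hypothesis applied to $\p$ then produces $f \in \Rel^D(S) \cap S^d$ with $f \notin \p$, hence $f \notin \q$, contradicting the starting assumption.

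The main obstacle is verifying the condition $\p \not\supseteq S_+^D$, since a priori elements of $\Rel^D(S) \setminus S^d$ could be forced into $\p$ by the definition. This is precisely the step where the covering hypothesis enters essentially: it must be used to show that if $\p \supseteq S_+^D$ held, then contraction to $S^d$ would force $\q$ to contain a suitably large portion of $S_+^D \cap S^d$, which in turn—using the hypothesis that the ideal generated by $\Rel^D(S) \cap S^d$ already controls the relevant locus of $\Proj^D S$—forces $\q \supseteq (S^d)_+$ up to radical, contradicting the original choice of $\q$.
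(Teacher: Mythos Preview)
Your first two paragraphs --- extracting the chart isomorphisms $(S^d)_{(f)}\simeq S_{(f)}$ from Remark~\ref{rem:spectra along a line} and gluing them along the overlaps $D_+(f_1f_2)$ --- are precisely the paper's argument, which consists of a single sentence citing that remark. The paper stops there; it does not separately treat whether the resulting open immersion $\Proj^D S \hookrightarrow \Proj^\N S^d$ hits all of $\Proj^\N S^d$.

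Your last two paragraphs attempt to supply that surjectivity, and this is where there is a genuine gap. The step you yourself flag as the main obstacle, namely $\p\not\supseteq S_+^D$, does not follow from what you have set up. By your definition, every $D$-homogeneous $s$ admitting no homogeneous multiple in $S^d$ lies in $\p$ vacuously; and even for those $s$ that do admit such multiples, all the witnesses $t$ may already lie in $\q$. Your proposed remedy --- contract to $S^d$ and argue that the covering hypothesis forces $\q\supseteq (S^d)_+$ up to radical --- does not close the gap: the covering hypothesis is the statement that the ideals $(\Rel^D(S)\cap S^d)\cdot S$ and $S_+^D$ have the same vanishing locus \emph{on $\Proj^D S$}, i.e.\ a comparison inside $S$. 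It says nothing directly about how the ideal of $S^d$ generated by $\Rel^D(S)\cap S^d$ compares to $(S^d)_+$. Translating the former comparison into the latter is essentially the surjectivity claim you are trying to prove, so the argument as sketched is circular. In summary, the portion of your proposal that coincides with the paper is correct, while the additional surjectivity step you attempt is where the real difficulty sits and remains unresolved.
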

\begin{proof}
	This is an immediate consequence of Remark~\ref{rem:spectra along a line}. 
\end{proof}

\begin{remark}[Effect of regrading on spectra]\label{rmk:regrading on spectra}
Let $\delta\colon D\to D'$ be a surjective homomorphism between finitely generated abelian groups, $S$ a $D$-graded ring, $f\in S$ a $D$-homogeneous element. By Lemma~\ref{lem:relevance after regrading}  $f\in S_\delta$ is $\delta$-homogeneous (in fact it is even $\delta$-relevant provided $f$ was $D$-relevant to begin with). Then 
\[
(S_\delta)_{(f)} \deq ((S_{\delta})_f)_{0} \equ \bigoplus_{d\in\ker\delta} (S_f)_{D,d} \ ,
\]
which gives rise to the natural inclusion of (ungraded) rings
\[
 S_{(f)} \deq   (S_f)_{D,0} \hookrightarrow (S_\delta)_{(f)}\ .
\]
This in turn leads to a morphism 
\begin{equation}\label{eqn:regrading morphism of affine schemes}
\Spec  (S_\delta)_{(f)} \longrightarrow \Spec  S_{(f)}
\end{equation}
of affine schemes. 
\end{remark} 
 
\begin{example}
Let $k$ be a field. We will compute  multihomogeneous spectra of the ring $S=k[T_1,T_2]$ for various gradings. This  example serves to illustrate that regradings have a profound effect on the geometry of $\Proj^DS$ (including separatedness and dimension), and highlights the contrast to the classical $\N$-graded spectrum. 
\begin{enumerate}
	\item First let $D=\Z^2$, and grade $S$ by degree, that is set $S_{(a,b)}=k\cdot T_1^aT_2^b$ for $(a,b)\in \N^2$. Then homogeneous elements are constant multiples of monomials, and 
	\[
	\Rel_D(S) \equ \{ \alpha T_1^aT_2^b\mid a,b\geq 1\, ,\, \alpha\in k\}\ .
	\]
	As a consequence, we obtain $S_{(f)}\simeq k$ for any $f\in\Rel_D(S)$, and $\Proj_DS$ turns out to be a point. 
	\item Let $D=\Z^2$ again, but now 
	\[
	S_{(a,b)} \deq \begin{cases}
	\bigoplus_{c,d\geq 0,c+d=a} k\cdot T_1^aT_2^b & \text{ if $b=0$} \\ 0 & \text{ otherwise.}
	\end{cases}
	\] 
	This is the grading induced by the (non-surjective) homomorphism $\delta\colon \Z^2\to \Z^2\, ,\, (c,d)\mapsto (c+d,0)$. Since all degrees lie on the $a$-axis, no homogeneous element is relevant, hence $\Proj_DS=\emptyset$. 
	\item For the sake of completeness let now $D=\Z$, and consider the grading of $S$ by total degree. Essentially by definition $\Proj_DS$ is the usual $\N$-graded projective spectrum of $S$, that is, $\Proj_DS\simeq \PP^1$. The grading in this case can be realized via the surjective homomorphism $\delta\colon \Z^2\to \Z$ given by $(a,b)\mapsto a+b$. 
	 \item Last, we look at the surjective regrading $\delta\colon \Z^2\to \Z$ via $(a,b)\mapsto a-b$. As explained in \cite{BS}*{Beginning of Section 3}, we have that $\Proj_DS$ is isomorphic to the affine line with double origin. 
\end{enumerate}
\end{example}

\begin{example}\label{ex:regrading changes the spectrum}
	Let $k$ be a field, and consider the polynomial ring $k[T_1,\dots,T_r]$ with the $\Z^r$-grading induced by $\deg(T_i)=e_i\in \Z^r$. Then $\Proj_{\Z^r} k[T_1,\dots,T_r]$ is a single point. At the same time if  we consider the $\Z$-grading given  $\delta\colon \Z^r\to\Z$ obtained by the sum of the coordinates  
	then (by returning to the classical $\N$-graded $\Proj$-construction), we obtain $\Proj_\delta k[T_1,\dots,T_r]\simeq \PP^{r-1}$.  
	
	By applying the results of \cite{BS}*{Section 3}, one can see that given a surjective linear map $\delta\colon \Z^r\to \Z^m$ with $m\leq r$, the projective spectrum $\Proj_\delta k[T_1,\dots,T_r]$ is a possibly non-separated $r-m$-dimensional integral scheme. 
\end{example}

\begin{proposition}\label{prop:morphism from regrading}
	Let $D$, $D'$ be finitely generated abelian groups, $\delta\colon D\to D'$ a group homomorphism, and $S$ a $D$-graded ring. Then there exists a natural rational map  $\Proj(S_\delta)\rat \Proj(S)$ such that the diagram
	\[
	\xymatrix{
		\Proj(S_\delta) \ar@{-->}[r] \ar[d] & \Proj(S) \ar[d] \\
		\Spec( (S_\delta)_0) \ar[r] & \Spec(S_0)	
	}
	\]
	commutes. 
\end{proposition}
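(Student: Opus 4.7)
The plan is to construct the rational map by gluing the affine morphisms already provided by Remark~\ref{rmk:regrading on spectra}, and then to verify the commutativity of the square on each distinguished affine open.

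First, recall that for every $D$-homogeneous element $f \in S$, the inclusion of rings $S_{(f)} \hookrightarrow (S_\delta)_{(f)}$ from \eqref{eqn:regrading morphism of affine schemes} yields an affine morphism $\varphi_f \colon \Spec (S_\delta)_{(f)} \lto \Spec S_{(f)}$. If in addition $f \in \Rel^D(S)$, then Lemma~\ref{lem:relevance after regrading}(3) guarantees that $f$ is also $\delta$-relevant, so both the source and the target of $\varphi_f$ are distinguished affine opens $D_+(f)$ inside $\Proj(S_\delta)$ and $\Proj(S)$ respectively.

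Next, I would verify that the family $\{\varphi_f\}_{f \in \Rel^D(S)}$ glues. For two $D$-relevant elements $f, g$, their product $fg$ is again $D$-relevant by Remark~\ref{rem:relevant and cone}, and the square of canonical localization inclusions
\[
\xymatrix{
S_{(f)} \ar[r] \ar[d] & (S_\delta)_{(f)} \ar[d] \\
S_{(fg)} \ar[r] & (S_\delta)_{(fg)}
}
\]
commutes. Consequently $\varphi_f$ and $\varphi_g$ agree on $\Spec (S_\delta)_{(fg)} = D_+(f)\cap D_+(g)$, and the pieces glue to a morphism
\[
\varphi \colon U \lto \Proj(S)\ ,\qquad U \deq \bigcup_{f\in\Rel^D(S)} D_+(f) \dsubseteq \Proj(S_\delta).
\]
If $\Proj(S)$ is empty there is nothing to prove; otherwise $\Rel^D(S)\neq\varnothing$ and $U$ is a non-empty open subset of $\Proj(S_\delta)$, so $\varphi$ defines the desired rational map. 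Commutativity of the square on each $D_+(f) \subseteq U$ reduces to the commutativity of
\[
\xymatrix{
S_0 \ar[r] \ar[d] & (S_\delta)_0 \ar[d] \\
S_{(f)} \ar[r] & (S_\delta)_{(f)}
}
\]
which is immediate because all four maps are the canonical inclusions of graded subrings.

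The key subtlety — and the reason why only a rational map, not a morphism, can be expected in general — is that $\Proj(S_\delta)$ is covered by distinguished opens $D_+(h)$ for $h$ which is $\delta$-homogeneous but need not be $D$-homogeneous. On such $D_+(h)$ there is no natural inclusion $S_{(h)} \hookrightarrow (S_\delta)_{(h)}$ to exploit, so the affine pieces of $\varphi$ do not extend past $U$. Identifying $U$ with the locus where the regrading still sees enough of the $D$-structure is therefore the only delicate point, and once this is done the remainder of the argument is a formal gluing and compatibility check.
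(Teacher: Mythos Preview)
Your proof is correct and follows essentially the same route as the paper: both construct the rational map by taking, for each $D$-relevant $f$, the affine morphism $\Spec(S_\delta)_{(f)}\to\Spec S_{(f)}$ of Remark~\ref{rmk:regrading on spectra}, and then glue over the open subset $U=\bigcup_{f\in\Rel^D(S)} D_+(f)\subseteq\Proj(S_\delta)$. Your version is simply more explicit about the gluing compatibility and the commutativity of the base-change square, which the paper leaves as ``follows from the commutativity of the appropriate diagram for rings.''
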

\begin{proof}
Let $f\in S$ be a $D$-relevant element. Then $f\in S_\phi$ is $\delta$-relevant as well. By (\ref{eqn:regrading morphism of affine schemes}) we obtain a morphism of affine schemes
$\Spec  (S_\delta)_{(f)} \longrightarrow \Spec  S_{(f)}$, which, as we let $f$ run throught all relevant elements of $S$ glue together to a morphism
\[
U \longrightarrow \Proj(S)\ ,
\] 
where $U\subseteq \Proj (S_\delta)$ is the union of all $\Spec(S_\phi)_{(f)}\subseteq \Proj (S_\phi)$, where $f\in \Rel_{D'}(S_\delta)$ was $D$-relevant in $S$ as well. 
The commutativity of the diagram follows from the commutativity of the appropriate diagram for rings.  
\end{proof}
 
\begin{example}
We return to our recurring example $S=k[T_1,T_2]$, $D=\Z^2$ the grading by monomial degree, and $\delta\colon \Z^2\to D'\deq \Z$ the homomorphism $(a,b)\mapsto a+b$.
Then 
\[
\Rel_D(S) \equ \{\alpha\cdot T_1^aT_2b\mid a,b\geq 1  \}  \ ,
\]
hence the open subset $U$ in the proof of the previous proposition is
\[
U \equ \bigcup_{f\in \Rel_D(S)} \Spec (S_\delta)_{(f)}\ .
\]
For every $f\in \Rel_D(S)$ though, $\Spec (S_\delta)_{(T_1^aT_2^b)} \equ \PP^1\setminus \{0,\infty\}$. 
\end{example}

\begin{proposition}[\cite{BS} Proposition 2.5 and 3.2]\label{prop:univ closed and finite type}
With notation as above, 
\begin{enumerate}
	\item If $S$ is noetherian, then the morphism $\Proj (S) \to \Spec (S_0)$ is universally closed and of finite type.
	\item If for every pair of points $x,y\in\Proj (S)$ there exists $f\in S$ relevant such that $x,y\in D_+(f)$, then the scheme $\Proj (S)$ is separated.
\end{enumerate}
In particular, if $S$ is noetherian and $\Spec S_0$ is a point  then $\Proj_DS$ is a complete prevariety. 
\end{proposition}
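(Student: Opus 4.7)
The first step for (1) is to cover $\Proj^D S$ by finitely many affine opens arising from relevant elements. Noetherianity of $S$ ensures that the irrelevant ideal $S_+$—generated by relevant homogeneous elements by definition—has a finite generating set $f_1,\dots,f_n$ of such elements, so $\Proj^D S \equ \bigcup_i D_+(f_i)$ with each $D_+(f_i) \equ \Spec S_{(f_i)}$ affine. For the finite-type conclusion it remains to check that each $S_{(f_i)}$ is of finite type over $S_0$; one deduces this from noetherianity using the geometric quotient description $\Spec S_{f_i} \to \Spec S_{(f_i)}$ by the diagonalisable group $G \deq \Spec S_0[D]$, whose reductivity yields finite generation of $(S_{f_i})^G \equ S_{(f_i)}$ from finite generation of $S_{f_i}$.

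For universal closedness I would choose homogeneous $S_0$-algebra generators of $S$ and build a surjection $S_0[T_1,\dots,T_n] \twoheadrightarrow S$ of $D$-graded rings. The usual $\Proj$-argument then produces a closed immersion
\[
\Proj^D S \ \hookrightarrow\ \Proj^D S_0[T_1,\dots,T_n]\ ,
\]
so universal closedness reduces to the polynomial case. Using the explicit description of $\Proj^D$ of a polynomial ring given in \cite{BS}*{Section 3} as a toric prevariety glued from affine charts along a fan-like datum in the weight lattice, the valuative criterion can be verified chart by chart essentially as for proper toric varieties: a DVR-valued point lying on one chart either extends on that chart, or one passes to a neighbouring chart whose defining cone contains the direction of the valuation and extends there.

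For (2), apply the valuative criterion of separatedness. Given two morphisms $\phi_1, \phi_2 \colon \Spec V \to \Proj^D S$ from a valuation ring $V$ with fraction field $K$ that agree on $\Spec K$, set $x_i \equ \phi_i(\mathfrak{m}_V)$. By hypothesis there is a relevant $f \in S$ with $x_1, x_2 \in D_+(f)$. Since $D_+(f)$ is open and $\mathfrak{m}_V$ is the unique closed point of the local scheme $\Spec V$, both $\phi_i$ factor through the affine $\Spec S_{(f)}$; affine schemes are separated, and combined with the agreement on $\Spec K$ this forces $\phi_1 \equ \phi_2$. The ``in particular'' claim follows: when $\Spec S_0$ is a point, (1) says the structure morphism is of finite type and universally closed, so $\Proj^D S$ is a complete prevariety.

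The main obstacle I expect is the universal closedness step for the polynomial ring. The slogan ``toric prevarieties glued from a fan are universally closed'' is standard, but carrying out the chart-by-chart valuative argument in the multigraded set-up requires careful bookkeeping of the cones $\sC(f)$ of relevant elements and the combinatorics of how these cones cover $D_\R$; once this is in place the remaining ingredients—reductive invariant theory, reduction to polynomial rings via surjections of multigraded rings, and separatedness of affine schemes—are essentially formal.
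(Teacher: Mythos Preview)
The paper does not give its own proof here; the proposition is quoted directly from Brenner--Schr\"oer (their Propositions~2.5 and~3.2), so there is no in-paper argument to compare against. Your outline is broadly correct and close in spirit to the original, but two steps deserve tightening.

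For part~(1), your invariant-theory argument requires $S_{f_i}$ to be \emph{finitely generated} over $S_0$, not merely noetherian; Hilbert's finiteness theorem for reductive (here diagonalisable) groups takes a finitely generated algebra as input. You therefore need the intermediate fact that a noetherian $D$-graded ring is automatically a finitely generated $S_0$-algebra. This is true (for $D\simeq\Z^n$ it is a theorem of Goto--Yamagishi; one reduces the general case to this), but it is not a triviality and should be stated rather than absorbed into ``one deduces this from noetherianity''. Once this is in place, your reduction to the polynomial case for universal closedness is fine.

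For part~(2), the valuative criterion in the direction you use requires the morphism to be quasi-separated, which you have not verified. It happens to hold here because $D_+(f)\cap D_+(g)=D_+(fg)$ is affine for relevant $f,g$, so the diagonal is quasi-compact; but it is simpler to bypass the valuative criterion entirely. The hypothesis says any two points lie in a common \emph{affine} open, and this alone forces separatedness: given $(x,y)\in \Proj^D S\times\Proj^D S$, choose affine $U=D_+(f)$ containing both $x$ and $y$; then $\Delta^{-1}(U\times U)=U$ and $\Delta|_U\colon U\to U\times U$ is the diagonal of an affine scheme, hence a closed immersion. This covers $\Proj^D S\times\Proj^D S$ by opens on which $\Delta$ is a closed immersion, so $\Delta$ is a closed immersion globally.
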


\begin{remark}
If $S$ is a finitely generated algebra over a field $k$, then it suffices to assume in Proposition~\ref{prop:univ closed and finite type} that $S_0$ is a finite-dimensional $k$-vector space. 
\end{remark}

We will be interested in showing that certain multihomogeneous projective spectra are separated, or more generally, finding large separated open subsets in them. To this end, 

\begin{proposition}[\cite{BS} Proposition 3.3]\label{prop: separated open subsets}
With notation as above, let $\{f_i\mid i\in I\}\subseteq S$ be a collection of relevant elements such that 
\[
\intt \left( \sC(f_i) \cap \sC(f_j)  \right) \neq \emptyset \ \ \text{for all $i,j\in I$}\ .
\]	
then 
\[
\bigcup_{i\in I} D_+(f_i) \dsubseteq \Proj (S)
\]
is a separated open subset. In particular, if the $D_+(f_i)$'s cover $\Proj(S)$, then the latter is separated. 
\end{proposition}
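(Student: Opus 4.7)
My plan is to apply the standard affine criterion for separatedness: a scheme with an open affine cover $\{U_i = \Spec A_i\}$ is separated if and only if every intersection $U_i \cap U_j$ is affine and the multiplication map $A_i \otimes_{S_0} A_j \to \cO(U_i \cap U_j)$ is surjective. In our setting every $D_+(f_i) = \Spec S_{(f_i)}$ is affine by construction, and by Remark~\ref{rem:relevant and cone} the product $f_i f_j$ is again relevant, so
\[
D_+(f_i) \cap D_+(f_j) \equ D_+(f_i f_j) \equ \Spec S_{(f_i f_j)}
\]
is affine as well. Thus the problem reduces to showing that for every pair $i,j \in I$ the natural ring homomorphism
\[
\phi_{ij}\colon S_{(f_i)} \otimes_{S_0} S_{(f_j)} \longrightarrow S_{(f_i f_j)}
\]
is surjective.

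To provide preimages, the key step is to exploit the cone hypothesis in order to construct \emph{bridging} homogeneous elements. Since $\intt\bigl(\sC(f_i) \cap \sC(f_j)\bigr) \subseteq D_\R$ is an open non-empty set, it contains a rational direction, and after clearing denominators we obtain a lattice point $\beta \in D$ in the common interior. By the definition of the cones $\sC(f_k)$ as closed convex hulls of degrees of homogeneous divisors of powers of $f_k$, an argument modeled on the proof of Lemma~\ref{lem:degree in the interior} yields a positive integer $N$ and homogeneous elements $a, b \in S$ with $a \mid f_i^{N_i}$, $b \mid f_j^{N_j}$ and $\deg a = \deg b = N\beta$. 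Writing $f_i^{N_i} = a \cdot a'$ and $f_j^{N_j} = b \cdot b'$, one checks directly that
\[
\frac{a}{b} \equ \frac{a b'}{f_j^{N_j}} \in S_{(f_j)}, \qquad \frac{b}{a} \equ \frac{b a'}{f_i^{N_i}} \in S_{(f_i)}\,.
\]
These are units of $S_{(f_i f_j)}$ which are inverse to one another but live in \emph{different} subrings; they will act as the mechanism for transferring degree between the two gradings.

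With these bridges in hand, for an arbitrary element $h/(f_i f_j)^n \in S_{(f_i f_j)}$ (whose numerator has degree $\deg h = n(\deg f_i + \deg f_j)$) I would show, by multiplying numerator and denominator by suitable monomials in $a, b, a', b'$, that the element can be rewritten as a sum of products $(u_k/f_i^{M_k}) \cdot (v_k/f_j^{L_k})$ with $\deg u_k = M_k \deg f_i$ and $\deg v_k = L_k \deg f_j$. Each multiplication by $a/b$ or $b/a$ shifts a piece of the degree from the $\deg f_j$-direction to the $\deg f_i$-direction (or conversely) while remaining inside the appropriate localisations, and after finitely many such shifts the presentation sits inside the image of $\phi_{ij}$. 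The ``in particular'' clause then follows immediately, because pairwise verification of the affine criterion on the chosen cover implies separatedness of the union.

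The main obstacle is precisely this last combinatorial step: making the degree bookkeeping and the splitting of $h$ fully explicit. The crucial feature of the hypothesis is the \emph{interior} condition (rather than a mere non-empty intersection of cones), since it is this that guarantees that the degrees of homogeneous divisors of the two chosen relevant elements generate a common finite-index subgroup of $D$, providing the flexibility required to carry out the rewriting. An alternative, possibly slicker, approach would be to argue via the valuative criterion: a $K$-point of $\bigcup_i D_+(f_i)$ with two distinct $R$-lifts, one into $D_+(f_i)$ and one into $D_+(f_j)$, would force incompatibility on the common chart $D_+(f_i f_j)$, which the bridging elements above rule out.
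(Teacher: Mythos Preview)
The paper does not contain a proof of this proposition; it is quoted verbatim from \cite{BS}*{Proposition~3.3} and used as a black box, so there is no in-paper argument to compare your attempt against.

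Evaluated on its own merits, your outline sets up the right framework: the affine criterion reduces the question to surjectivity of
\[
\phi_{ij}\colon S_{(f_i)}\otimes_{S_0} S_{(f_j)} \longrightarrow S_{(f_if_j)},
\]
and your construction of the ``bridging'' units $ba'/f_i^{N_i}\in S_{(f_i)}$ and $ab'/f_j^{N_j}\in S_{(f_j)}$, inverse to one another in $S_{(f_if_j)}$, is correct and is indeed the mechanism used in \cite{BS}. However, the proposal is not a proof: you explicitly flag the decisive step --- rewriting an arbitrary $h/(f_if_j)^n$ as a sum of products of elements from the two degree-zero localisations --- as ``the main obstacle'' and leave it undone. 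A single pair of bridging elements with common degree $N\beta$ does not by itself let you split the degree $n(\deg f_i+\deg f_j)$ of $h$ into a piece in $\Z\deg f_i$ and a piece in $\Z\deg f_j$; what is actually needed is the stronger consequence of the interior hypothesis, namely that the monoid of degrees of homogeneous divisors of powers of $f_i$ (respectively $f_j$) contains \emph{all} lattice points sufficiently deep inside $\sC(f_i)$ (respectively $\sC(f_j)$), so that after multiplying numerator and denominator by a high enough power of $f_if_j$ one can genuinely factor. Your sketch gestures at this (``degrees \dots\ generate a common finite-index subgroup'') but does not carry it out, and the suggested alternative via the valuative criterion is not developed at all.

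In short: right strategy, right key construction, but the heart of the argument is missing rather than merely compressed.
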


\begin{example}
If $S$ is graded by $\Z$, then $\Proj S$ is separated  if all non-zero degrees have the same sign. This shows that projective spectra of $\N$-graded rings are always separated.  
\end{example}

\begin{example}
Consider the case $S=R[T_1,\dots,T_k]$ with $R$ a commutative ring, and the grading is given on monomials via the linear map $\phi\colon \Z^k\to \Z^m$. As discussed in \cite{BS}*{Section 3}, such a grading yields a simplicial torus embedding with torus $\Spec R[\ker \phi]$. It is natural to suspect that $\Proj(S_\phi)$ is separated whenever $\im \deg \subseteq D\otimes_\Z \R$ generates  a strictly convex cone, nevertheless this is not the case.

Let $k=3$, $m=2$ and $\phi$ be given by $(1,0,0)\mapsto (1,0)$, $(0,1,0)\mapsto (1,1)$, and $(0,0,1)\mapsto (0,1)$. In search of relevant homogeneous elements we can restrict our attention to square-free monomials, of which only $T_1T_2,T_1T_3,T_2T_3$, and $T_1T_2T_3$ turn out to be relevant. Since the interior of $\sC(T_1T_2)\cap \sC(T_2T_3)$ is empty, the open subsets  $D_+(T_1T_2)\cup D_+(T_1T_3)$ or $D_+(T_2T_3)\cup D_+(T_1T_3)$ are separated, but $\Proj(S_\phi)$ itself is not.   
\end{example}

The following result describes morphisms to multihomogeneous spectra.

\begin{proposition}[\cite{BS} Proposition 4.2]\label{prop:morphisms to spectra}
Let $D$ be a finitely generated abelian group, $S$ a $D$-graded ring. In addition let $X$ be a scheme, $\cB$ a quasicoherent $D$-graded $\OO_X$-algebra, and let 
$\phi\colon S\to \Gamma(X,\cB)$ a morphism of $D$-graded rings. Write 
\[
U\deq \bigcup_{f\in S \text{ relevant}} X_{\phi(f)}\ .
\]	
Then there exists a natural morphism of schemes $r_{\cB,\phi}\colon U\to \Proj (S)$ along with a commutative diagram
\[
\xymatrix{
U \ar[d]^{r_{\cB,\phi}}   & \Spec (\cB|_U) \ar[l] \ar[r] \ar[d]^{\phi^{\#}} & \Spec(\cB) \ar[d]^{\phi^{\#}}  \\
\Proj(S) & \Spec(S) \setminus V_+ \ar[l] \ar[r] & \Spec(S)
}
\]
Here the unnamed morphisms are natural projection and inclusion morphisms. 
\end{proposition}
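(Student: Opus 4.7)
The strategy is to construct $r_{\cB,\phi}$ locally on the open cover $\{X_{\phi(f)}\}_{f\in \Rel^D(S)}$ of $U$, imitating the universal property of the classical $\N$-graded $\Proj$. Concretely, for each relevant $f\in S$ the goal is to produce a morphism $r_f\colon X_{\phi(f)} \longrightarrow D_+(f) = \Spec S_{(f)} \subseteq \Proj(S)$ and then to verify that these local morphisms agree on overlaps so as to glue.

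For the local construction, fix $f\in \Rel^D(S)$. On $X_{\phi(f)}$ the section $\phi(f)\in\Gamma(X,\cB_{\deg f})$ is invertible in $\cB$, so the $D$-graded ring homomorphism $\phi$ extends uniquely to a $D$-graded homomorphism
\[
\phi_f\colon S_f\; \longrightarrow\; \Gamma\bigl(X_{\phi(f)},\, \cB[\phi(f)^{-1}]\bigr)\ .
\]
Taking degree-zero pieces yields $S_{(f)} \to \Gamma(X_{\phi(f)}, (\cB[\phi(f)^{-1}])_0)$. To upgrade this ring map to a morphism of schemes $r_f\colon X_{\phi(f)}\to \Spec S_{(f)}$, I would invoke the geometric quotient description: since $f$ is relevant, $S_f$ is periodic, and by \cite{BS}*{Lemma 2.1} the projection $\Spec S_f \to \Spec S_{(f)}$ is a geometric quotient for the action of the diagonalizable group $G_D = \Spec\Z[D]$. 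The relative spectrum $\Spec_X\bigl(\cB|_{X_{\phi(f)}}[\phi(f)^{-1}]\bigr)$ carries a natural $G_D$-action as well, and $\phi_f$ induces a $G_D$-equivariant morphism from it to $\Spec S_f$; passing to quotients on both sides produces $r_f$. This is precisely the restriction of the right-hand vertical arrow in the statement's diagram to the open subset indexed by $f$.

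For gluing, note that for homogeneous $f,g\in S$ one has $X_{\phi(f)}\cap X_{\phi(g)} = X_{\phi(fg)}$, and the product $fg$ is again relevant by Remark~\ref{rem:relevant and cone}. The restrictions of $r_f$ and $r_g$ to this overlap both factor through $D_+(fg) = D_+(f)\cap D_+(g)$ and coincide there by the universal property of localization: both are induced by the unique graded extension of $\phi$ to $S \to S_{fg}$. Hence the $r_f$ glue to a well-defined morphism $r_{\cB,\phi}\colon U\to\Proj(S)$. Commutativity of the diagram is then formal: the morphism $\phi^{\#}\colon\Spec(\cB)\to\Spec(S)$ sends $\Spec(\cB|_U)$ into $\Spec(S)\setminus V_+$ (since on $\Spec(\cB|_{X_{\phi(f)}})$ the section $\phi(f)$ is invertible, the image lies in $D(f)\subseteq\Spec S\setminus V_+$), and the vertical projections are exactly the $G_D$-quotients used above.

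The main obstacle I foresee is the descent step in the local construction, namely making rigorous how the graded homomorphism $\phi_f$ induces an actual morphism $X_{\phi(f)}\to \Spec S_{(f)}$ rather than merely a morphism from the relative spectrum of $\cB|_{X_{\phi(f)}}$. This requires pinning down the way $\OO_{X_{\phi(f)}}$ sits inside the degree-zero part of the localized graded algebra $\cB[\phi(f)^{-1}]$, and exploiting the $G_D$-invariance afforded by \cite{BS}*{Lemma 2.1} to descend $\phi_f^{\#}$ along the quotient; once this is in place the remaining verifications are routine compatibility checks with restriction and localization.
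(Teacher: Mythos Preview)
The paper does not actually prove this proposition: it is quoted verbatim from \cite{BS}*{Proposition 4.2} and no argument is supplied, so there is nothing in the paper to compare your proposal against. Your sketch is the standard construction and is essentially what Brenner--Schr\"oer do; the local definition of $r_f$ via $S_{(f)}\to\Gamma(X_{\phi(f)},(\cB[\phi(f)^{-1}])_0)$ together with the structure map $\OO_X\to\cB_0$ and the gluing over $X_{\phi(fg)}$ is exactly right.

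One comment on the obstacle you flag: the descent step is not as delicate as you fear. You do not need to pass to a $G_D$-quotient of $X_{\phi(f)}$ itself; rather, the ring map $S_{(f)}\to\Gamma\bigl(X_{\phi(f)},(\cB[\phi(f)^{-1}])_0\bigr)$ already lands in an $\OO_{X_{\phi(f)}}$-algebra, so it yields directly a morphism $\Spec_X\bigl((\cB[\phi(f)^{-1}])_0\bigr)\to\Spec S_{(f)}$, and precomposing with the section $X_{\phi(f)}\to\Spec_X\bigl((\cB[\phi(f)^{-1}])_0\bigr)$ coming from the algebra structure map $\OO_X\to\cB_0\hookrightarrow(\cB[\phi(f)^{-1}])_0$ gives $r_f$. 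The $G_D$-quotient picture is what makes the \emph{diagram} commute, but the morphism $r_f$ itself is produced purely by this composition; no genuine descent argument is required.
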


\begin{corollary}\label{cor:rtl map morphism}
With notation as above every point $x\in X$ has an affine open neighbourhood of the form $X_{\phi(f)}$ with $f$ relevant, then $r_{\cB,\phi}$ is a morphism. 
\end{corollary}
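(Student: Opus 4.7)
The plan is to observe that this is essentially a tautological consequence of Proposition~\ref{prop:morphisms to spectra}. That proposition already produces a morphism of schemes $r_{\cB,\phi}\colon U \to \Proj(S)$ defined on the open subset
\[
U \equ \bigcup_{f\in S \text{ relevant}} X_{\phi(f)} \dsubseteq X\ ,
\]
so the only thing left to verify under the stated hypothesis is that $U$ is in fact all of $X$. Indeed, the hypothesis provides, for each point $x\in X$, a relevant element $f\in S$ (depending on $x$) such that $X_{\phi(f)}$ is an affine open neighbourhood of $x$; in particular $x\in X_{\phi(f)}\subseteq U$, and since $x$ was arbitrary we conclude $X\subseteq U$. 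Combined with the trivial inclusion $U\subseteq X$ this gives $U=X$, so the morphism $r_{\cB,\phi}$ produced by Proposition~\ref{prop:morphisms to spectra} is defined on all of $X$.

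Because the argument is purely set-theoretic once Proposition~\ref{prop:morphisms to spectra} is in hand, there is no real obstacle to overcome here; the content of the corollary is simply that the covering hypothesis is precisely what promotes the a priori partially defined map $r_{\cB,\phi}$ into a genuine morphism $X\to \Proj(S)$. The only (minor) point worth remarking in the write-up is that the local neighbourhoods $X_{\phi(f)}$ are automatically open in $X$ (being principal open subsets determined by the global section $\phi(f)\in\Gamma(X,\cB)$), so that the covering condition makes sense and immediately yields the equality $U=X$ at the level of underlying topological spaces; compatibility of the scheme structures is already encoded in the commutative diagram supplied by Proposition~\ref{prop:morphisms to spectra}.
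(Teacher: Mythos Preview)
Your argument is correct and is exactly the intended one: the paper states this as an immediate corollary without proof, and the content is precisely that the covering hypothesis forces $U=X$, so the morphism $r_{\cB,\phi}$ from Proposition~\ref{prop:morphisms to spectra} is globally defined.
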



\begin{lemma}
	Let $D$ be a finitely generated abelian group, $\sigma\colon S_1\to S_2$ a morphism of $D$-graded rings. Then $\sigma$ induces a natural morphism 
	\[
	f_\sigma \colon \Proj(S_2) \setminus V_+(\sigma((S_1)_+)) \longrightarrow \Proj (S_1)
	\]
	such that for every relevant element $s\in (S_1)_+$ one has 
	\[
	(f_\sigma)^{-1}(D_+(s)) \equ D_+(\sigma(s))\ ,
	\]
	and the restriction $f_\sigma|_{D_+(\sigma(s))}$ coincides with the canonical morphism $\Spec((S_2)_{(\sigma(s))})\to \Spec((S_1)_{(s)})$. 
	
	If $\sigma$ is surjective then $V_+(\sigma((S_1)_+)) \equ \emptyset$, hence the morphism extends to 
	\[
	f_\sigma \colon \Proj(S_2) \longrightarrow \Proj(S_1)\ ,
	\]
	which turns out to be a closed immersion.  
\end{lemma}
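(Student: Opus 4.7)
The plan is to build $f_\sigma$ by specifying affine-local morphisms and gluing. For a relevant $s \in (S_1)_+$, the image $\sigma(s) \in S_2$ is again relevant (any factorization $s^n = gh$ in $S_1$ yields a factorization $\sigma(s)^n = \sigma(g)\sigma(h)$ in $S_2$ with $\deg\sigma(g) = \deg g$, so $\sC(s) \subseteq \sC(\sigma(s))$ and the non-empty interior condition passes). Hence $D_+(\sigma(s)) = \Spec((S_2)_{(\sigma(s))})$ is an affine open of $\Proj(S_2)$. The graded homomorphism $\sigma$ localizes to a $D$-graded map $(S_1)_s \to (S_2)_{\sigma(s)}$, and its degree-zero part $\varphi_s\colon (S_1)_{(s)} \to (S_2)_{(\sigma(s))}$ induces the affine morphism $D_+(\sigma(s)) \to D_+(s)$. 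By functoriality of localization, these local morphisms agree on overlaps $D_+(\sigma(s)\sigma(s'))$, so they glue to a morphism from $\bigcup_s D_+(\sigma(s))$ to $\Proj(S_1)$. The union on the left equals $\Proj(S_2) \setminus V_+(\sigma((S_1)_+))$. The identity $f_\sigma^{-1}(D_+(s)) = D_+(\sigma(s))$ for relevant $s$ is then read off at the level of homogeneous primes, and compatibility with the canonical $\Spec \varphi_s$ is built into the construction.

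For the surjective case I must first establish $V_+(\sigma((S_1)_+)) = \emptyset$. Fix $x \in \Proj(S_2)$ with homogeneous prime $\mathfrak p_x$; since $(S_2)_+ \not\subseteq \mathfrak p_x$, pick a relevant $t \in S_2 \setminus \mathfrak p_x$. By relevance of $t$, some power $t^N$ admits homogeneous divisors $g_1, \dots, g_r \in S_2$ whose degrees generate a finite-index subgroup of $D$. Surjectivity of $\sigma$ on each graded component lets me lift each $g_i$ to a homogeneous $\tilde g_i \in S_1$ of the same degree. Set $s := \tilde g_1 \cdots \tilde g_r$; the $\tilde g_i$ are divisors of $s$ in $S_1$ whose degrees by construction generate a finite-index subgroup of $D$, so Remark~\ref{rem:relevant and cone} gives $\intt \sC(s) \neq \emptyset$, making $s$ relevant and placing it in $(S_1)_+$. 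Since each $g_i$ divides $t^N$ and $t \notin \mathfrak p_x$, no $g_i$ lies in $\mathfrak p_x$, so by primality $\sigma(s) = g_1 \cdots g_r \notin \mathfrak p_x$.

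For the closed-immersion conclusion, surjectivity of $\sigma$ on each graded component forces each $\varphi_s$ to be surjective: any degree-zero $a/\sigma(s)^n \in (S_2)_{(\sigma(s))}$ is hit by $b/s^n$ for any preimage $b \in (S_1)_{n\deg s}$ of $a$. Thus $D_+(\sigma(s)) \to D_+(s)$ is a closed immersion of affine schemes for every $s \in \Rel^D((S_1)_+)$, and since the $D_+(s)$ cover $\Proj(S_1)$ and the closed-immersion property is local on the target, $f_\sigma$ is itself a closed immersion.

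The hardest step will be showing $V_+(\sigma((S_1)_+)) = \emptyset$ in the surjective case: an arbitrary homogeneous lift to $S_1$ of a relevant element of $S_2$ need not be relevant, since $\sigma$ can kill divisors and the cone of divisor-degrees can be strictly smaller in $S_1$ than in $S_2$. The remedy is to lift an entire system of divisors witnessing relevance of a power of $t$ rather than $t$ itself, and take the product of the lifts; this simultaneously forces the lift to be relevant (its divisors include the $\tilde g_i$, whose degrees span a finite-index subgroup of $D$) and keeps its image outside the chosen prime by primality.
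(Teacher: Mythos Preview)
Your argument is correct. The paper itself does not give a proof at all: it simply writes ``This is \cite{Bosch}*{Proposition 9.20 and Corollary 9.21} \emph{mutatis mutandis}.'' In other words, the authors defer entirely to the classical $\N$-graded case and assert that the modifications are routine.

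Your write-up, by contrast, actually carries out those modifications and isolates the one place where the multigraded setting is genuinely more delicate than the $\N$-graded one: verifying $V_+(\sigma((S_1)_+)) = \emptyset$ when $\sigma$ is surjective. In the classical situation $(S_1)_+$ is the ideal of positive-degree elements and surjectivity of $\sigma$ immediately yields $\sigma((S_1)_+) = (S_2)_+$; here, as you observe, a homogeneous lift of a relevant element need not be relevant, so a naive lift fails. Your remedy---lift a full system of divisors $g_1,\dots,g_r$ of a power of $t$ whose degrees witness relevance, then take the product of the lifts---is exactly the right move, and your closing paragraph explaining why this is the crux is well put. One small terminological point: the graded ideals $\mathfrak p_x$ parametrising points of $\Proj^D(S_2)$ are not literally prime (see the remark after Definition~\ref{defn:multihomogeneous spectrum}), but your argument only uses that the set of homogeneous elements in the complement is multiplicatively closed, which is precisely the defining property, so nothing is affected.
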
	 
\begin{proof}
	This is \cite{Bosch}*{Proposition 9.20 and Corollary 9.21} mutatis mutandis. 
\end{proof}

A key property of multihomogeneous spectra is their relationship to toric geometry. 

\begin{definition}
Let $R$ be a ring, $M$ a free abelian group of finite rank. A \emph{simplicial torus embedding} of the torus $T\deq \Spec R[M]$ is a $T$-equivariant open embedding
$T\hookrightarrow X$ locally given by 
\[
R[M\cap \sigma^\vee ] \hookrightarrow R[M]
\]
for a strongly convex simplicial cone $\sigma\subseteq \HHom_\Z(M,\Z)\otimes_\Z\R$. 
\end{definition}

Following \cite{BS} we do not require the torus embedding to be separated. Recall the construction of Cox \cite{Cox}: let $d\colon \Z^k\to D$ be a linear map, consider the induced $D$-grading on $S=R[T_1,\dots,T_k]$ via $\deg (T_i) = d(e_i)$.

\begin{proposition}[\cite{BS} Proposition 3.4]\label{prop:torus}
	Let $D$ be a finitely generated abelian group, $R$ an arbitrary commutative ring (with $1$), and let $S = R[T_1,\dots,T_r]$ be a $D$-graded polynomial algebra where the grading comes
	from a linear map $d\colon \Z^r\to D$ via sending the element $e_i\in \Z^r$ to $\deg(T_i)\in D$. Then $\Proj(S)$ is a possibly non-separated  simplicial torus embedding of the torus
	$\Spec(R[\ker\, d])$.  	
\end{proposition}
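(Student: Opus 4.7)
The plan is to describe $\Proj(S)$ via explicit affine toric charts indexed by subsets of $\{1,\ldots,r\}$ and glue. One may assume $d$ is surjective: replacing $D$ by $d(\Z^r)$ does not change $\Proj^D S$ since all homogeneous components indexed outside $d(\Z^r)$ vanish, and if $d(\Z^r)$ has infinite index in $D$ then no element of $S$ is relevant, so $\Proj(S)=\varnothing$ vacuously. Set $M\deq\ker d$ and $N\deq\Hom(M,\Z)$, and write $\bar e_i\in N_\R$ for the image of $e_i\in\Z^r$ under the canonical surjection $\pi\colon\R^r\twoheadrightarrow N_\R$ dual to the inclusion $M_\R\hookrightarrow\R^r$; note $\ker\pi=\im(d_\R^\vee)$.

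For each $I\subseteq\{1,\ldots,r\}$, let $T^I\deq\prod_{i\in I}T_i$. By Remarks~\ref{rem:relevant and cone} and~\ref{rem:product is relevant}, $T^I$ is relevant exactly when $\{d(e_i):i\in I\}$ spans $D_\R$; call such subsets $I$ \emph{relevant}. A direct computation of the degree-zero piece of $S_{T^I}$ gives
\[
S_{(T^I)}\;\simeq\;R[M\cap\sigma_I^\vee],\qquad\sigma_I\deq\Cone(\bar e_i:i\notin I)\subseteq N_\R,
\]
since $b\in M_\R$ satisfies $b_i\ge 0$ for all $i\notin I$ iff $\langle\bar e_i,b\rangle\ge 0$ for all $i\notin I$. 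The cone $\sigma_I$ is simplicial and strongly convex: a relation $\sum_{i\notin I}\lambda_i\bar e_i=0$ lifts to $\sum_{i\notin I}\lambda_i e_i=d_\R^\vee(\xi)$ for some $\xi\in D_\R^\vee$, and evaluating at $e_k$ for $k\in I$ yields $\xi(d(e_k))=0$; since $\{d(e_k):k\in I\}$ spans $D_\R$ this forces $\xi=0$, whence all $\lambda_i=0$. Consequently, each $D_+(T^I)\simeq\Spec R[M\cap\sigma_I^\vee]$ is an affine simplicial torus embedding of $T\deq\Spec R[M]$.

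The main technical step is to show that the charts $\{D_+(T^I):I\text{ relevant}\}$ already cover $\Proj(S)$; this is where care is required, because graded ideals in $\Proj^D S$ need not be prime in the usual sense. Let $x\in\Proj(S)$ correspond to a graded ideal $\mathfrak p\not\supseteq S_+$ whose homogeneous complement $H\deq S\setminus\mathfrak p$ is multiplicatively closed, and set $I\deq\{i:T_i\notin\mathfrak p\}$; then $T^I\in H$, so it suffices to verify that $I$ is relevant. Pick a relevant $f\notin\mathfrak p$. For any homogeneous factorization $f^m=hk$, neither $h$ nor $k$ lies in $\mathfrak p$ (otherwise $f^m\in\mathfrak p$). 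Expanding $h=\sum_a c_a T^a$, any monomial $T^a$ with $a_j>0$ for some $j\notin I$ is divisible by $T_j\in\mathfrak p$ and hence lies in $\mathfrak p$; subtracting those terms leaves a nonzero homogeneous $h'\in R[T_i:i\in I]$ of the same degree, so $\deg h\in\Span\{d(e_i):i\in I\}$. As $h$ ranges over homogeneous divisors of powers of $f$, its degrees generate a finite-index subgroup of $D$ by relevance of $f$, forcing $\Span\{d(e_i):i\in I\}=D_\R$ and thus the relevance of $I$.

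To finish, the identity $D_+(T^I)\cap D_+(T^{I'})=D_+(T^{I\cup I'})$ (same radicals) matches the face inclusion $\sigma_{I\cup I'}\preceq\sigma_I$, and $I\cup I'$ remains relevant whenever $I$ does. The open embeddings $T\hookrightarrow D_+(T^I)$ coming from the inclusions $M\cap\sigma_I^\vee\hookrightarrow M$ are $T$-equivariant and compatible on overlaps, so gluing the charts as in Definition~\ref{defn:multihomogeneous spectrum} exhibits $\Proj(S)$ as a simplicial torus embedding of $T=\Spec R[\ker d]$. Separatedness can fail precisely because the simplicial cones $\sigma_I$ need not assemble into a fan in $N_\R$, consistent with Proposition~\ref{prop: separated open subsets}.
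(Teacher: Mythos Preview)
The paper does not supply its own proof of this proposition; it is quoted directly from \cite{BS}*{Proposition 3.4}. Your argument is essentially the standard one and is correct: you identify the affine charts $D_+(T^I)$ with $\Spec R[M\cap\sigma_I^\vee]$, verify that each $\sigma_I$ is simplicial and strongly convex via the linear-algebra computation using $\ker\pi=\im(d_\R^\vee)$, show that these charts already cover $\Proj(S)$, and glue. The covering step is the one requiring real care, and your argument---showing that for the ideal $\mathfrak p$ attached to a point the set $I=\{i:T_i\notin\mathfrak p\}$ is relevant by reducing modulo the monomials divisible by some $T_j\in\mathfrak p$---is clean and correct.

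One minor quibble: in your opening reduction, when $d(\Z^r)$ has infinite index in $D$ you correctly observe that $\Proj(S)=\varnothing$, but calling this ``vacuously'' a torus embedding of the nontrivial torus $\Spec R[\ker d]$ is not quite right, since a torus embedding is by definition an open immersion of the torus into the scheme. In practice this degenerate case is excluded (and \cite{BS} works under hypotheses that avoid it); it would be cleaner to simply assume from the outset that $d(\Z^r)$ has finite index in $D$, as the statement is otherwise false on its face.
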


\begin{corollary}\label{cor:embedding into toric variety}
	Let $R$ be a commutative ring, $D$ be a finitely generated abelian group, $S$ a $D$-graded ring finitely generated over  $S_0=R$. Then $\Proj(S)$ admits a closed embedding into a simplicial torus embedding.
\end{corollary}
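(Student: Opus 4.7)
The strategy is to mimic the classical construction of closed embeddings of projective varieties into projective space: present $S$ as a quotient of a polynomial ring in a $D$-graded way, and then invoke the preceding lemma about surjections inducing closed immersions of multihomogeneous spectra, together with Proposition~\ref{prop:torus} identifying $\Proj$ of a $D$-graded polynomial algebra with a simplicial torus embedding.

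In detail, the first step is to produce a $D$-homogeneous set of generators of $S$ over $R$. Since $S$ is finitely generated as an $R=S_0$-algebra, one may start with an arbitrary finite generating set and replace each generator by its finitely many $D$-homogeneous components; the resulting finite collection $s_1,\ldots,s_r$ of $D$-homogeneous elements still generates $S$ over $R$. Set $d_i \deq \deg(s_i)\in D$ and give the polynomial ring $P \deq R[T_1,\ldots,T_r]$ the $D$-grading induced by the linear map $d\colon \Z^r\to D$ sending $e_i$ to $d_i$ (so $\deg(T_i)=d_i$). The assignment $T_i\mapsto s_i$ then defines a $D$-graded ring homomorphism $\sigma\colon P \to S$, and it is surjective because the $s_i$'s generate $S$ over $R=P_0$.

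The second step is to apply the lemma preceding Corollary~\ref{cor:embedding into toric variety} (the $\Proj$-analogue of \cite{Bosch}*{Proposition 9.20 and Corollary 9.21}): since $\sigma$ is a surjective morphism of $D$-graded rings, the induced map
\[
f_\sigma \colon \Proj(S) \lto \Proj(P)
\]
is defined on all of $\Proj(S)$ and is a closed immersion. Finally, by Proposition~\ref{prop:torus}, $\Proj(P)=\Proj(R[T_1,\ldots,T_r])$, graded via $d\colon \Z^r\to D$, is a (possibly non-separated) simplicial torus embedding of the torus $\Spec(R[\ker d])$. Composing, we obtain the desired closed embedding of $\Proj(S)$ into a simplicial torus embedding.

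There is essentially no obstacle here: the only mild subtlety is the reduction to $D$-homogeneous generators, which is automatic from finite generation together with the fact that each element decomposes uniquely into finitely many homogeneous pieces. Everything else is a direct application of results already proved in this section.
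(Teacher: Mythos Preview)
Your proof is correct and follows essentially the same approach as the paper: present $S$ as a $D$-graded quotient of a polynomial ring, apply the preceding lemma to obtain a closed immersion of multihomogeneous spectra, and invoke Proposition~\ref{prop:torus}. The only difference is that you are slightly more explicit about passing to homogeneous generators, whereas the paper tacitly assumes this when writing $\deg(T_i)\deq\deg_S(\phi(T_i))$.
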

\begin{proof}
	The $D$-graded ring $S$ being finitely generated over $S_0=R$ is equivalent to the existence of a surjective $R$-algebra homomorphism $\phi\colon R[T_1,\dots,T_r]\to S$. We equip the polynomial algebra $R[T_1,\dots,T_r]$ with a $D$-grading by setting $\deg(T_i) \deq \deg_S(\phi(T_i))$. This way $\phi$ becomes a surjective morphism of $D$-graded rings. 
	
	The associated morphism $\phi^\#\colon \Proj(S)\to \Proj(R[T_1,\dots,T_r])$ is a closed embedding, and the latter is a simplicial torus embedding by Proposition~\ref{prop:torus}.  
\end{proof}
 
\begin{example}[\cite{BS} Example 3.8]
	If a polynomial ring  $S=R[T_1,\dots,T_r]$ is graded by $\Z$ in such a way that all indeterminates have positive degree, then $\Proj(S)$ turns out to be a weighted projective space.
\end{example}

\begin{corollary}\label{cor:torus embedding a variety}
	Let $R$ be a commutative ring, $S=R[T_1,\dots,T_r]$ considered as a $\Z^k$-graded ring as above. If $\Proj(S)$ is a separated scheme, then it is a projective variety.
\end{corollary}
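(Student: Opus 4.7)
By Proposition~\ref{prop:univ closed and finite type}, the structure morphism $\Proj(S) \to \Spec R$ is of finite type and universally closed, so combining this with the separatedness hypothesis makes it proper over $\Spec R$. To upgrade properness to projectivity, the plan is to exhibit $\Proj^D S$ as a classical $\N$-graded Proj via Lemma~\ref{lem:multigraded vs usual proj}. Concretely, it suffices to produce a degree $d_0 \in D$ with $\deg_D(S^{d_0}) \subseteq \N d_0$ such that the basic opens $D_+(g)$, as $g$ ranges over relevant elements of $S^{d_0}$, cover $\Proj^D S$; the lemma then yields $\Proj^D S \simeq \Proj^\N S^{d_0}$, which is a projective $R$-scheme by classical Proj theory since $S^{d_0}$ is a finitely generated $R$-algebra.

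The crucial step is constructing $d_0$. Since $S$ is noetherian, $\Proj(S)$ is quasi-compact and admits a finite cover by basic opens $D_+(f_i)$ with $i = 1,\dots,N$, and one may take the $f_i$ to be relevant monomials $f_i = T^{\alpha_i}$: every homogeneous element of $S$ is a sum of monomials of one degree, and for such $f = \sum m_j$ the elementary identity $D_+(f) = \bigcup_j D_+(m_j)$ lets us refine the cover to monomials. The cone $\sC(f_i)$ is then the rational polyhedral cone generated by $\{\deg(T_j) : (\alpha_i)_j > 0\}$. The separatedness of $\Proj(S)$ ought to force a converse to Proposition~\ref{prop: separated open subsets}: the interiors $\intt\,\sC(f_i)$ admit a common rational point $d_0 \in D_\QQ$. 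Conceptually this $d_0$ is an ample character for the torus action on $\Spec S$, and its existence is the reinterpretation of the Brenner--Schr\"oer construction as a toric GIT quotient.

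After clearing denominators and replacing $d_0$ by a suitable positive integer multiple, the argument of Lemma~\ref{lem:degree in the interior} produces, for each $i$, a monomial $g_i \in S$ of degree $d_0$ dividing a power $f_i^{m_i}$; such $g_i$ is relevant because its cone has nonempty interior containing $d_0$. The divisibility $g_i \mid f_i^{m_i}$ gives $V(g_i) \subseteq V(f_i)$ and hence $D_+(f_i) \subseteq D_+(g_i)$, so $\{D_+(g_i)\}_{i=1}^N$ is the desired open cover of $\Proj^D S$ by basic opens coming from relevant elements of $S^{d_0}$. The containment $\deg_D(S^{d_0}) \subseteq \N d_0$ holds because $d_0$ lies in the interior of $\cone(\deg T_1,\dots,\deg T_r)$, and this cone must be strongly convex whenever $\Proj(S)$ is separated (otherwise the torus would have a non-trivial subtorus acting trivially on $\Proj(S)$, obstructing properness or separation). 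Lemma~\ref{lem:multigraded vs usual proj} then yields $\Proj^D S \simeq \Proj^\N S^{d_0}$, completing the argument.

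The main obstacle is producing the common interior point $d_0$: translating the abstract scheme-theoretic separatedness of the glued Brenner--Schr\"oer spectrum into the existence of a uniform positive character on all the relevant cones requires showing that the maximal cones $\sC(f_i)$ arrange themselves as the maximal cones of a \emph{projective} simplicial fan, rather than merely a complete simplicial one. Once this ample character $d_0$ is in hand, everything else is a routine consequence of Lemma~\ref{lem:multigraded vs usual proj} and the standard Proj formalism.
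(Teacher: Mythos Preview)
Your approach differs substantially from the paper's. The paper gives a one-paragraph argument that simply cites \cite{BS}*{Corollary 3.6}: it records that $\Proj(S)$ is of finite type, universally closed, divisorial (via \cite{BS}*{Corollary 3.5}), and integral, and then observes that separatedness is the hypothesis. The actual passage from these properties to projectivity is left entirely to the cited reference. You instead attempt something more constructive and self-contained: exhibit $\Proj^D S$ as a classical $\N$-graded $\Proj$ via Lemma~\ref{lem:multigraded vs usual proj} by locating a single degree $d_0$ lying in the interior of every relevant cone.

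However, the proposal has a genuine gap at exactly the point you flag. You assert that separatedness ``ought to force a converse to Proposition~\ref{prop: separated open subsets}'', but you do not prove this, and it is the entire content of the corollary. Even granting a literal converse, Proposition~\ref{prop: separated open subsets} only involves \emph{pairwise} nonempty intersections $\intt(\sC(f_i)\cap\sC(f_j))\neq\emptyset$; you need a single $d_0$ lying in \emph{all} of the $\intt\sC(f_i)$ simultaneously, and for general families of full-dimensional cones pairwise overlap does not imply a common interior point. Your closing reformulation in terms of the $\sC(f_i)$ assembling into ``the maximal cones of a projective simplicial fan rather than merely a complete one'' is a restatement of what is to be shown, not an argument for it. Without an independent way to produce $d_0$ (or to extract an ample character from the separated Brenner--Schr\"oer quotient), the reduction to Lemma~\ref{lem:multigraded vs usual proj} does not go through.

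Two smaller points. First, the ``identity'' $D_+(f)=\bigcup_j D_+(m_j)$ for $f=\sum_j m_j$ is only the inclusion $D_+(f)\subseteq\bigcup_j D_+(m_j)$ (cancellation among the $m_j$ can make $f$ vanish where some $m_j$ does not); the inclusion still suffices to refine the cover, but you should also check that the resulting monomials are themselves relevant, or else start directly with a cover by relevant monomials, which exists since these generate $S_+$. Second, the argument that the degree cone is strongly convex (``otherwise the torus would have a nontrivial subtorus acting trivially on $\Proj(S)$, obstructing properness or separation'') is too loose to stand as written and would need to be made precise.
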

\begin{proof}
As summarized in the proof of \cite{BS}*{Corollary 3.6}, since $S$ is finitely generated over $S_0$, the scheme $\Proj(S)$ is of finite type and universally closed (\emph{loc. cit.} Proposition 2.5), even a divisorial scheme (\emph{loc. cit.} Corollary 3.5). As $S$ is integral, so is $\Proj(S)$, hence we are left 
with making sure that it is separated. 

\end{proof}

\section{Ample families}

Here we review (and somewhat extend) the notion of ample families originally defined by Borelli \cite{Borelli}. As opposed to \cite{Borelli} and \cite{BS}, our notion takes different gradings into account. For this reason we give a fairly detailed account of \cite{BS}*{Section 4} with the appropriate changes. 

\begin{definition}[Ample family \cite{Borelli}]
Let $X$ be a qcqs scheme,  $\cL_1,\dots,\cL_r$ invertible sheaves on $X$. Write $\cL^d$ for $\cL_1^{\otimes d_1}\otimes\ldots\otimes \cL_r^{\otimes d_r}$ where $d=(d_1,\ldots,d_r)\in\Z^r$. We say that $\cL_1,\dots,\cL_r$ is an \emph{ample family} on $X$ if the collection of open sets $X_f$ with $f\in \Gamma(X,\cL^d)$ and 
$d\in\N^r$ forms  a basis of the topology of $X$. A qcqs scheme is called \emph{divisorial} if it supports an ample family.  	
If $\cL_1,\dots,\cL_r$ is an ample family on $X$, then we write $S_\cL\deq \oplus_{d\in \N^r}\Gamma(X,\cL^d)$.
\end{definition} 
  
\begin{remark}
Both quasiprojective varieties and locally $\QQ$-factorial varieties are divisorial. More importantly, if a $D$-graded ring $S$ is finitely generated over $S_0$, then $\Proj (S)$ is divisorial (see \cite{BS}*{Corollary 3.5}), i.e. it admits an ample family.  
\end{remark}

\begin{remark}
Let $X$ be a qcqs scheme,  $\cL_1,\dots,\cL_r$ be invertible sheaves  on $X$, and  let in addition $\delta\colon \Z^r\to D$ be a surjective homomorphism of  abelian groups. We write $S_{\cL,\delta}$ for the  ring $S_\cL$ with grading induced by $\delta$, that is
\[
\left(S_{\cL,\delta}\right)_d \equ \bigoplus_{a\in \delta^{-1}(d)} \Gamma(X,\cL^a)\ \ \ \  \text{for every $d\in D$.}
\]
\end{remark}

\begin{proposition}\label{prop:ample family topology}
Let $C$ be a qcqs scheme, $\cL_1,\dots,\cL_r$ invertible sheaves on $X$, let $\delta\colon \Z^r\to D$ be a surjective homomorphism of abelian groups.  Then the following are equivalent.
\begin{enumerate}
\item The open sets $X_f$ with $f\in (S_{\cL,\delta})_d$ and $d\in D$ form a basis of the (Zariski) topology of $X$.
\item For every point $x\in$ there exists $d\in D$ and $f\in (S_{\cL,\delta})_d$ such that $X_f$ is an affine neighbourhood of $x$. 
\item For every point $x\in X$ there exists a $\QQ$-basis $d_1,\dots,d_r\in D$ and homogeneous elements $f_i\in (S_{\cL,\delta})_{d_i}$ such that the $X_{f_i}$'s are affine neighbourhoods of $x$.
\end{enumerate} 
\end{proposition}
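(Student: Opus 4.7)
The proof splits into $(1)\Leftrightarrow(2)$, the trivial $(3)\Rightarrow(2)$, and the main step $(2)\Rightarrow(3)$. Throughout I adopt the convention that a homogeneous element of $(S_{\cL,\delta})_d$ is a section $f\in\Gamma(X,\cL^a)$ for some $a\in\delta^{-1}(d)\cap\N^r$. Two standard qcqs facts underpin the argument: if $f\in\Gamma(X,\cL)$ is a section of a line bundle and $\cF$ is quasi-coherent, then $\Gamma(X_f,\cF)=\bigcup_N f^{-N}\Gamma(X,\cF\otimes\cL^N)$; and the nonvanishing locus of a section of a line bundle on an affine scheme is again affine.

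For $(1)\Rightarrow(2)$, given $x\in X$ I pick an affine open $U\ni x$ that trivializes each of $\cL_1,\dots,\cL_r$ simultaneously, then apply (1) to produce $f\in\Gamma(X,\cL^a)$ with $x\in X_f\subseteq U$. Under the chosen trivialization of $\cL^a|_U$, $X_f\cap U=D(g)$ for some $g\in\OO(U)$, and $X_f\subseteq U$ forces $X_f=D(g)$, affine. For $(2)\Rightarrow(1)$, given $x\in V$ open, use (2) to obtain an affine $X_{f_0}\ni x$ with $f_0\in\Gamma(X,\cL^{a_0})$, pick $h\in\OO(X_{f_0})$ with $x\in D(h)\subseteq V\cap X_{f_0}$, write $h=g/f_0^N$ via qcqs extension (so $g\in\Gamma(X,\cL^{Na_0})$), and set $f\deq gf_0\in\Gamma(X,\cL^{(N+1)a_0})$; then $X_f=X_g\cap X_{f_0}=D(h)\subseteq V$. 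The implication $(3)\Rightarrow(2)$ is immediate since any single $X_{f_i}$ suffices.

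The heart of the argument is $(2)\Rightarrow(3)$. Fix $x\in X$ and let $f_0\in\Gamma(X,\cL^{a_0})$, $d_0=\delta(a_0)$, with $X_{f_0}$ affine around $x$, be the data from (2). Using surjectivity of $\delta$, choose $b^{(1)},\dots,b^{(s)}\in\Z^r$ (where $s=\rk_\QQ D$) such that $\delta(b^{(i)})$ form a $\QQ$-basis of $D_\QQ$; a generic large common shift by a multiple of $(1,\dots,1)\in\Z^r$ brings them into $\N^r$ while preserving the basis property (only one shift is excluded). On the affine $X_{f_0}$ each invertible sheaf $\cL^{b^{(i)}}|_{X_{f_0}}$ admits a global section $h_i$ with $h_i(x)\ne 0$; by qcqs extension with $N$ chosen uniformly large, $f_0^N h_i$ extends to $g_i\in\Gamma(X,\cL^{Na_0+b^{(i)}})$. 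Setting $f_i\deq f_0\cdot g_i$, one sees that $X_{f_i}=D(h_i)\subseteq X_{f_0}$ is an affine neighbourhood of $x$, and $f_i$ has degree $d_i\deq(N+1)d_0+\delta(b^{(i)})$ in $D$.

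The principal obstacle is ensuring the $d_i$ form a $\QQ$-basis of $D_\QQ$. Writing $d_0=\sum_j\beta_j\delta(b^{(j)})$ in the chosen basis, the transition matrix from $(\delta(b^{(j)}))_j$ to $(d_i)_i$ is $I+(N+1)\mathbf{1}\beta^\top$, whose determinant equals $1+(N+1)\sum_j\beta_j$ and vanishes for at most one value of $N$. Consequently $N$ can be chosen to simultaneously satisfy uniform qcqs-extendability of the $f_0^N h_i$ and non-degeneracy of this determinant; each constraint excludes only finitely many values, so all demands are jointly met, which completes the argument.
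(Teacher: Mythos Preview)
Your argument is correct and is precisely the explicit unwinding of \cite{BS}*{Proposition 1.1} that the paper invokes verbatim: the qcqs extension lemma supplies the global sections, the nonvanishing-locus-on-affine fact gives affineness, and your rank-one-update determinant computation $\det(I+(N+1)\mathbf{1}\beta^\top)=1+(N+1)\sum_j\beta_j$ is the clean way to guarantee the resulting degrees form a $\QQ$-basis of $D$ for all but one choice of $N$. The only cosmetic point is that the paper's ``$d_1,\dots,d_r$'' in (3) should read $d_1,\dots,d_s$ with $s=\rk_\QQ D$, as you correctly use.
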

\begin{proof}
The proof of \cite{BS}*{Proposition 1.1} goes through verbatim. 
\end{proof}

Let $X$ be a scheme, $\cB$ a quasi-coherent graded $\OO_X$-algebra, and $\phi\colon S\to \Gamma(X,\cB)$ a graded homomorphism. As explained in \cite{BS}*{Proposition 4.2}, upon setting 
\[
U \deq U_{\cB,\phi} \deq \bigcup_{f\in S \text{ relevant}} X_{\phi(f)}\ ,
\]
one obtains a natural morphism $r_{\cB,\phi}\colon U\to \Proj (S)$, that is, a natural rational map $r_{\cB,\phi}\colon X\dashrightarrow \Proj (S)$. 

\begin{remark}
Let $X$ be a qcqs scheme, $\cL_1,\dots,\cL_r$ an ample family on $X$, $\delta\colon \Z^r\to D$ a surjective morphism of abelian groups, $S$ a $D$-graded ring. Consider the $D$-graded $\OO_X$-algebra
\[
\cB \deq \cB_{\cL,\delta} \deq \bigoplus_{d\in D} \left( \bigoplus_{a\in \delta^{-1}(d)} \cL^{a}\right) \ . 
\]
Then $\cB$ is quasicoherent. By the ampleness of the family $\cL$, if the graded homomorphism $\delta\colon S\to \Gamma(X,\cB) \equ S_{\cL,\delta}$ is in addition surjective, then 
\[
U_{\cB,\delta} \equ  \bigcup_{f\in \Rel(S)} X_{\delta(f)} \equ X\ ,
\]
in particular the rational map $r_{\cB,\delta}$ of \cite{BS}*{Proposition 4.2} is an actual  morphism $X\to \Proj S$. 
\end{remark}

The connection between  multihomogeneous spectra and ample families goes as follows.

\begin{example}\label{ex:canonical rtl map}
Let $X$ be a qcqs scheme, $\cL_1,\dots,\cL_r$ invertible sheaves on $X$, and let $\delta\colon \Z^r\to D$ be a surjective group homomorphism. With the notation of Proposition~\ref{prop:morphisms to spectra} let $S=S_{\cL,\delta}$ and $\cB=\cB_{\cL,\delta}$. Since $\Gamma(X,\cB_{\cL,\delta})=S_{\cL,\delta}$, we can take $\phi\colon S\to \Gamma(X,\cB)$ to be the identity map. In this context we will identify $f$ with $\phi(f)$. 

Via Proposition~\ref{prop:morphisms to spectra} these choices give rise to a canonical rational map $X\rat\Proj S_{\cL,\delta}$. By Corollary~\ref{cor:rtl map morphism} this rational map is a morphism if for every point $x\in X$ there exists a relevant section $f\in S_{\cL,\delta}$ such that $x\in X_f$. 
\end{example}

\begin{thm}[cf. \cite{BS} Theorem 4.4]\label{thm:ample families and morphisms}
Let $X$ be a qcqs scheme, $\cL_1,\dots,\cL_r$ be a collection of invertible sheaves on $X$, and let $\delta\colon \Z^r\to D$ be a surjective group homomorphism.
Then the following are equivalent.
\begin{itemize}
	\item[(1)] $\cL_1,\dots,\cL_r$ form an ample family. 
	\item[(2)] The natural rational map $r\colon X\dashrightarrow \Proj(S_{\cL,\delta})$ is everywhere defined, and it is an open embedding. 
\end{itemize}
\end{thm}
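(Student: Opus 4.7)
The direction $(2)\Rightarrow(1)$ is the easier one. Assuming $r\colon X \to \Proj(S_{\cL,\delta})$ is an everywhere-defined open embedding, every $x \in X$ has an affine neighbourhood of the form $X_f = r^{-1}(D_+(f))$ for some relevant $f \in S_{\cL,\delta}$. Since any relevant $f$ of $D$-degree $d$ decomposes as a formal sum $\sum_{a\in\delta^{-1}(d)} f_a$ with $f_a \in H^0(X,\cL^a)$, we have $X_f = \bigcup_a X_{f_a}$, and after shrinking on the target (which is possible because the family $\{D_+(\cdot)\}$ generates the topology on $\Proj(S_{\cL,\delta})$) we recover a basis of the form required in the definition of an ample family.

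For the main direction $(1)\Rightarrow(2)$, I would proceed in three steps. \emph{First}, to show $r$ is everywhere defined: for $x \in X$, Proposition~\ref{prop:ample family topology}(3) produces a $\QQ$-basis $d_1,\dots,d_r$ of $D$ and $\delta$-homogeneous sections $f_i \in (S_{\cL,\delta})_{d_i}$ such that each $X_{f_i}$ is an affine neighbourhood of $x$. The product $f \deq f_1 \cdots f_r$ is relevant by Remark~\ref{rem:product is relevant}, and $x \in X_f$, so the rational map of Example~\ref{ex:canonical rtl map} is actually defined at $x$ by Corollary~\ref{cor:rtl map morphism}. \emph{Second}, to show $r$ is injective on points: given distinct $x,y \in X$, use the ample family property to find a $\Z^r$-homogeneous $g$ with $x \in X_g$, $y \notin X_g$, and combine it with a relevant $h$ at $x$ as produced in Step one; the product $gh$ is $\delta$-homogeneous, and since $h \mid gh$ one has $\sC(h) \subseteq \sC(gh)$, so $gh$ is relevant, $x \in X_{gh}$, and $y \notin X_{gh}$. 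As $r^{-1}(D_+(gh)) = X_{gh}$, this separates $r(x)$ from $r(y)$. \emph{Third}, to show $r$ is locally an isomorphism onto an open subscheme: for each relevant $f$ with $X_f$ affine (by Step one, these cover $X$), the restriction $r|_{X_f}\colon X_f \to D_+(f) = \Spec (S_{\cL,\delta})_{(f)}$ corresponds to the canonical ring homomorphism $(S_{\cL,\delta})_{(f)} \to \Gamma(X_f, \OO_X)$, and I would verify this is an isomorphism by the standard ample-sheaf argument: any $h \in \Gamma(X_f,\OO_X)$ has some $f^n h$ extending to a global section of the appropriate $\delta$-degree, and $g/f^m$ vanishing on $X_f$ forces $f^N g = 0$ globally for some $N$. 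Combining these three steps, $r$ is an everywhere-defined morphism that is injective on points and locally an isomorphism onto open subsets, hence an open embedding.

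The main obstacle I expect is the third step. In the classical $\N$-graded case with a single ample line bundle, multiplying a local function by a high power of a section immediately produces a genuine global section, and both surjectivity and injectivity of the ring map on $X_f$ follow at once. In the multigraded setting, a relevant element $f$ typically combines sections of distinct line bundles $\cL^a$ through the fibres of $\delta$, so that $X_f = \bigcup_a X_{f_a}$ and the trivialisations used to interpret $g/f^m$ as a regular function must be assembled compatibly across the components $f_a$. The fact that the degrees of divisors of powers of $f$ span a full-dimensional cone (Lemma~\ref{lem:degree in the interior}), which is equivalent to relevance of $f$, is precisely the convex-geometric input that supplies enough degree flexibility to realise arbitrary local denominators on $X_f$ while staying inside a single $D$-graded piece.
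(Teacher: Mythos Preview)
Your plan is essentially the paper's argument, but with one redundant step and an overcomplication in the key step.

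Your Step~1 (everywhere defined) is identical to the paper's: pick a $\QQ$-basis of $D$ via Proposition~\ref{prop:ample family topology}(3), multiply the corresponding sections to get a relevant $f$ with $x\in X_f$, done. (Minor slip: you write $d_1,\dots,d_r$ for a $\QQ$-basis of $D$, but $r$ is already the number of line bundles; the paper writes $d_1,\dots,d_m$ with $m\leq r$.)

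Your Step~2 (injectivity on points) is unnecessary. Once Step~3 shows that $r|_{X_f}\colon X_f\to D_+(f)$ is an isomorphism for each relevant $f$ with $X_f$ affine, and since $r^{-1}(D_+(f))=X_f$ by construction of $r$, the inverses $D_+(f)\to X_f$ automatically glue to an inverse of $r$ on the open image $\bigcup_f D_+(f)$. Injectivity then comes for free; the paper does not argue it separately.

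For Step~3, the paper does not unpack the ``standard ample-sheaf argument'' in the multigraded setting as you propose. Instead it works with the quasi-coherent graded $\OO_X$-algebra $\cB=\cB_{\cL,\delta}$ and invokes the general qcqs localisation isomorphism $\Gamma(X,\cB)_f\xrightarrow{\sim}\Gamma(X_f,\cB)$ as a single black box; taking degree-$0$ parts then yields $S_{(f)}\cong\Gamma(X_f,\OO_X)$ and hence $X_f\xrightarrow{\sim}D_+(f)$. This sidesteps the difficulty you flag in your last paragraph: because the localisation isomorphism is stated for the whole algebra $\cB$ rather than for a single line bundle, the ``formal sum'' nature of a $D$-homogeneous $f$ causes no trouble, and no explicit convex-geometric bookkeeping on $\sC(f)$ is needed at this point beyond the fact that $f$ is relevant. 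Your hands-on approach would also work, but the obstacle you anticipate is an artefact of not packaging things via $\cB$.
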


We will recite the proof of \cite{BS} with the necessary modifications. 

\begin{proof}
(1)$\Rightarrow$(2) Let $x\in X$ be an arbitrary point. According to Proposition~\ref{prop:ample family topology}, $D$ possesses a $\QQ$-basis $d_1,\dots,d_m$ (here $m\leq r$) such that there exist  global sections $f_i\in (S_{\cL,\delta})_{d_i}$ ($1\leq i\leq m$) giving rise to affine neighbourhoods $X_{f_i}$ of $x$. Observe that $f\deq f_1\cdot\ldots\cdot f_m\in S_{\cL,\delta}$ is relevant by Remark~\ref{rem:product is relevant}. It follows from Example~\ref{ex:canonical rtl map} that $X\rat \Proj S_{\cL,\delta}$ is defined at $x$, and since $x$ was arbitrary, it is a morphism. 

To verify that the canonical morphism $X\to \Proj S_{\cL,\delta}$ is an open immersion, take $f\in \Rel(S_{\cL,\delta})$ such that $X_f=X_{\phi(f)}$ is affine.

Then  the canonical map $\Gamma(X,\cB_{\cL,\delta})_f\to \Gamma(X_f,\cB)$ is bijective, hence it induces an isomorphism $X_f\to D_+(f)$. These isomorphisms glue together to an open embedding $X\to \Proj S_{\cL,\delta}$.  \\
(2)$\Rightarrow$(1) The implication follows from the definition of an ample family via Proposition~\ref{prop:ample family topology}. 
\end{proof}

\begin{corollary}[\cite{BS} Corollary 4.5 and 4.6]\label{cor:isom vs septd}
Let $X$ be a qcqs scheme, $\cL_1,\dots,\cL_r$ an ample family on $X$,$\delta\colon \Z^r\to D$ a surjective homomorphism. Then the following hold. 
\begin{enumerate}
	\item If $f\in \Rel(S_{\cL,\delta})$  then $X_f$ is quasi-affine. 
	\item The following are equivalent.
	\begin{enumerate}
		\item The open embedding $X\to \Proj S_{\cL,\delta}$ is an isomorphism.
		\item For every $f\in \Rel(S_{\cL,\delta})$  the open subset $X_f\subseteq X$ is affine. 
	\end{enumerate}
	If in addition  the affine hull of $X$ is proper then the above are equivalent to $\Proj S_{\cL,\delta}$ being separated.  
\end{enumerate}	
\end{corollary}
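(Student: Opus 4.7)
The plan is to follow the argument of \cite{BS}*{Corollaries 4.5 and 4.6}, substituting Theorem~\ref{thm:ample families and morphisms} for \cite{BS}*{Theorem 4.4}. That theorem provides an open embedding $r\colon X\hookrightarrow \Proj S_{\cL,\delta}$ under which $X_f$ corresponds to $r(X)\cap D_+(f)\subseteq D_+(f) \equ \Spec S_{(f)}$ for every relevant $f$. Since $D_+(f)$ is affine, $X_f$ is open in an affine scheme and hence quasi-affine, establishing~(1).

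For part (2), the implication (a)$\Rightarrow$(b) is immediate because $X_f \equ D_+(f) \equ \Spec S_{(f)}$ is affine whenever $r$ is an isomorphism. The converse rests on the identification
\[
\Gamma(X_f,\OO_{X_f}) \,\cong\, S_{(f)} \quad \text{for every $f\in\Rel(S_{\cL,\delta})$,}
\]
which one obtains by decomposing $f \equ \sum_{a\in\delta^{-1}(d)} f_a$ with $f_a\in\Gamma(X,\cL^a)$, applying the classical identification of $\Gamma(X_{f_a},\OO)$ with the degree-zero part of $\bigoplus_{n\geq 0}\Gamma(X,\cL^{na})$ localized at $f_a$ on each piece of the finite cover $X_f \equ \bigcup_a X_{f_a}$, and gluing via a qcqs \v{C}ech argument. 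Granted this, affineness of $X_f$ gives $X_f \cong \Spec\Gamma(X_f,\OO_{X_f}) \equ \Spec S_{(f)} \equ D_+(f)$, and since the $D_+(f)$ cover $\Proj S_{\cL,\delta}$ the open embedding $r$ becomes surjective, hence an isomorphism.

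For the final equivalence, assume the canonical morphism $\pi\colon X\to\Spec S_0$ is proper. The direction (a)$\Rightarrow$ separated is clear: if $r$ is an isomorphism then $\Proj S_{\cL,\delta}$ is proper over an affine base, hence separated. For the converse, $\Spec S_0$ being affine ensures that the structure morphism $q\colon \Proj S_{\cL,\delta}\to\Spec S_0$ is separated, and the cancellation property for proper morphisms applied to the factorization $\pi = q\circ r$ yields that $r$ itself is proper. A proper open embedding has image both open and closed, so in the irreducible (or at least connected) setting typical in our applications $r(X)\equ \Proj S_{\cL,\delta}$, completing the argument.

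The main obstacle is precisely the identification $\Gamma(X_f,\OO_{X_f}) \cong S_{(f)}$ in the regraded setting, because $f$ is only $\delta$-homogeneous and not $\Z^r$-homogeneous, so one must track how the fibre $\delta^{-1}(d)$ interacts with the classical local computation on each $X_{f_a}$ and patch the resulting pieces compatibly. This is precisely where the qcqs hypothesis on $X$ and the ample family structure are used in a non-formal way; everything else follows by general scheme-theoretic manipulations.
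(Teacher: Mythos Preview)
Your overall strategy matches what the paper intends (it simply defers to \cite{BS}*{Corollaries 4.5 and 4.6}, with Theorem~\ref{thm:ample families and morphisms} replacing \cite{BS}*{Theorem 4.4}), and parts~(1), (a)$\Rightarrow$(b), and the shape of the properness argument are fine.

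The gap is in (b)$\Rightarrow$(a). Your proposed route to the identification $\Gamma(X_f,\OO_{X_f})\cong S_{(f)}$ --- decompose $f=\sum_a f_a$ into $\Z^r$-homogeneous pieces, cover $X_f$ by the $X_{f_a}$, invoke the classical single-line-bundle identification on each piece, and \v{C}ech-glue --- does not actually produce $S_{(f)}$. The classical identification on $X_{f_a}$ yields the degree-zero localization of the $\N$-graded ring $\bigoplus_{n\ge 0}\Gamma(X,\cL^{na})$ at $f_a$; these are localizations of \emph{different} subrings at \emph{different} elements, and you give no mechanism for reassembling them into the single localization $(S_{\cL,\delta})_{(f)}$. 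A \v{C}ech computation on the cover recovers $\Gamma(X_f,\OO)$, but not an identification of it with $S_{(f)}$, so the missing link is precisely the step you flagged as the main obstacle. The argument in \cite{BS} (and in the paper's own proof of Theorem~\ref{thm:ample families and morphisms}) bypasses this entirely: one proves the algebra-level bijection $\Gamma(X,\cB_{\cL,\delta})_f\to\Gamma(X_f,\cB_{\cL,\delta})$ directly for qcqs $X$ and quasi-coherent $\cB$, and then reads off $X_f\cong D_+(f)$ whenever $X_f$ is affine. Since that isomorphism is already established inside the proof of Theorem~\ref{thm:ample families and morphisms}, the corollary needs only to cite it.

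A second, smaller gap: in the final equivalence you invoke connectedness (or irreducibility) of $\Proj S_{\cL,\delta}$ to conclude surjectivity from clopenness of $r(X)$, but the statement carries no such hypothesis. The cleaner argument is that for every nonzero relevant $f$ the open $X_f\subseteq X$ is nonempty and maps into $D_+(f)$, so $r(X)$ meets every $D_+(f)$; since these cover $\Proj S_{\cL,\delta}$, the image $r(X)$ is dense, and a dense clopen subset is everything.
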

\begin{proof}
The proofs of \cite{BS}*{Corollary 4.5 and 4.6} work here verbatim. 
\end{proof}

\begin{thm}[cf. \cite{BS} Theorem 4.4]\label{thm:ample families and morphisms when fin gen}
	Let $X$ be a qcqs scheme, $\cL_1,\dots,\cL_r$ an ample family  on $X$, and let $\delta\colon \Z^r\to D$ be a surjective group homomorphism.  Assume in addition that $X$ is of finite type over a noetherian ring $R$. 
	
	Then  there exists a finite collection of sections $f_i\in \Gamma(X,\cL^{d_i})$ for an index set $i\in I$, and there exists a $D$-graded polynomial algebra $A \deq R[T_i\mid i\in I]$ such that the rational map $r\colon X\dashrightarrow \Proj(A)$ induced by $T_i\mapsto f_i$ for all $i\in I$ is everywhere defined and an embedding, which becomes a closed embedding if $X$ is proper.  
 \end{thm}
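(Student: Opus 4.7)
The plan is to leverage Theorem~\ref{thm:ample families and morphisms}, which already provides an open embedding $X\hookrightarrow \Proj(S_{\cL,\delta})$, and cut the typically infinitely generated ring $S_{\cL,\delta}$ down to a finitely generated polynomial subalgebra by using quasi-compactness of $X$ together with the finite type hypothesis over the noetherian ring $R$.

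For each point $x\in X$, Proposition~\ref{prop:ample family topology}(3) supplies a $\QQ$-basis $d_{x,1},\dots,d_{x,m}$ of $D$ and sections $f_{x,l}\in (S_{\cL,\delta})_{d_{x,l}}$ such that each $X_{f_{x,l}}$ is an affine neighbourhood of $x$. Then $g_x\deq f_{x,1}\cdots f_{x,m}$ is relevant by Remark~\ref{rem:product is relevant}, and $X_{g_x}=\bigcap_l X_{f_{x,l}}$ is an affine neighbourhood of $x$. Since $X$ is quasi-compact, finitely many such affines $X_{g_1},\dots,X_{g_N}$ suffice, where $g_j=f_{j,1}\cdots f_{j,m_j}$. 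Because $X$ is of finite type over $R$, each coordinate ring $(S_{\cL,\delta})_{(g_j)}\simeq\Gamma(X_{g_j},\OO_X)$ is a finitely generated $R$-algebra (via the local isomorphism supplied by Theorem~\ref{thm:ample families and morphisms}); I fix finite generating sets and write each generator as $h_{j,k}/g_j^{m_{j,k}}$ with $h_{j,k}\in S_{\cL,\delta}$ homogeneous of degree $m_{j,k}\deg(g_j)$. Let $I$ index the finite family $\{f_{j,l}\}\cup\{h_{j,k}\}$.

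Set $A\deq R[T_i\mid i\in I]$ with $\deg T_i$ the $D$-degree of the corresponding section; the assignment $T_i\mapsto$(section) yields a graded $R$-algebra homomorphism $\phi\colon A\to S_{\cL,\delta}$ with image $B\deq\phi(A)$. By Proposition~\ref{prop:morphisms to spectra}, the triple $(A,\cB_{\cL,\delta},\phi)$ produces a rational map $r\colon X\dashrightarrow\Proj(A)$ defined on $\bigcup_{g\in\Rel(A)}X_{\phi(g)}$; since every $g_j$ is itself relevant in $A$ (its factors $T_{j,l}$ have degrees forming a $\QQ$-basis of $D$) and the $X_{g_j}$'s already cover $X$, the rational map is an honest morphism. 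The surjection $A\twoheadrightarrow B$ gives a closed immersion $\Proj(B)\hookrightarrow\Proj(A)$, and by construction $B_{(g_j)}=(S_{\cL,\delta})_{(g_j)}$ for every $j$. Hence on each affine $X_{g_j}$ the morphism $r$ factors as
\[
X_{g_j}\ \xrightarrow{\sim}\ \Spec B_{(g_j)}\ \hookrightarrow\ \Spec A_{(g_j)}\equ D_+^A(g_j),
\]
with the first arrow the isomorphism of Theorem~\ref{thm:ample families and morphisms} applied to the subring $B$ and the second the closed immersion from $A\twoheadrightarrow B$. Globally this exhibits $r$ as a locally closed immersion with image contained in $\Proj(B)\subseteq\Proj(A)$, proving the first half of the statement.

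For the proper case, it remains to show that the image $V\deq r(X)\cong X$ is closed in $\Proj(B)$, since $\Proj(B)$ is already closed in $\Proj(A)$. The main obstacle I anticipate is precisely here: $\Proj(B)$ need not be separated over $R$, so the classical argument ``proper morphism into a separated target has closed image'' is unavailable---indeed, $\PP^1$ sitting as one copy inside the projective line with doubled origin is already a proper open immersion that fails to be closed. The workaround I would pursue is either (a) to augment the collection $\{f_i\}$ further, picking additional sections so that Proposition~\ref{prop: separated open subsets} forces the cones $\sC^A(g_j)$ to pairwise have overlapping interiors and thereby makes the open subscheme $\bigcup_j D_+^B(g_j)\subseteq\Proj(B)$ separated; or (b) to argue directly via the valuative criterion for $X\to\Spec R$ that $V$ is stable under specialization in $\Proj(B)$, using the uniqueness of the lift in the proper scheme $X$ to pin down the closed point inside $V$ despite the potential non-separatedness of the ambient $\Proj(B)$. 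Either way, once $V$ is shown to be closed in $\Proj(B)$, the composition $X\xrightarrow{\sim}V\hookrightarrow\Proj(B)\hookrightarrow\Proj(A)$ is the desired closed immersion.
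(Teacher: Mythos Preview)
Your construction is essentially the paper's: obtain a finite affine cover $\{X_{g_j}\}$ with each $g_j$ relevant (quasi-compactness plus Proposition~\ref{prop:ample family topology}), use the finite-type hypothesis to choose finitely many $R$-algebra generators of each $\Gamma(X_{g_j},\OO_X)$, clear denominators into $S_{\cL,\delta}$, and let the resulting sections be the variables of $A$. One genuine refinement on your side: by including the individual factors $f_{j,l}$ (not only the numerators $h_{j,k}$) among the variables, you guarantee that each $g_j=\prod_l f_{j,l}$ is relevant \emph{in the polynomial ring $A$}, so that $D_+^A(g_j)$ is honestly affine and the local comparison $X_{g_j}\to D_+^A(g_j)$ is a map of affine schemes induced by a ring surjection. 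The paper instead takes only the elements $g_{i,j}=f_i^{n_{i,j}}h_{i,j}$ as variables and argues via the surjections $A_{(T_{i,j})}\to (S_\cL)_{(g_{i,j})}$ directly.

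As for the proper case, the paper's proof reads, in its entirety, ``If $X$ is proper then this is a closed embedding.'' Your hesitation is therefore not a deviation from the paper but a correct identification of a point it asserts without argument; non-separatedness of $\Proj^D A$ really does block the usual ``proper source and separated target over the base implies proper, hence closed'' step. The paper does not address this inside the present proof; separatedness of the toric target is arranged only in the subsequent Theorem~\ref{thm:closed embedding into separable}, where the generating sections are chosen so that all cones $\sC(T_I)$ coincide---which is exactly your workaround~(a). Your workaround~(b), on the other hand, does not go through as stated: the lift $\Spec V\to X$ exists by properness of $X\to\Spec R$, but in the absence of separatedness its composite with $r$ need not agree with the original map $\Spec V\to\Proj(B)$, so you cannot conclude that the given specialization lies in $r(X)$.
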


\begin{proof} 
	In order to keep track of the regrading and since we will need it later on,  we  give a complete proof (which is a minor modification of \cite{BS}*{Theorem 4.4}). To  this end, let $X$ be a qcqs scheme of finite type over a noetherian ring $R$, and let  $\cL_1,\dots,\cL_r$ be an ample family. 
	
	By definition the sets $X_f$ where $f\in S_\cL$ generates the Zariski topology of $X$. The latter is quasi-compact, hence there must exist a finite collection of $\delta$-relevant elements $f_1,\dots,f_k\in S_\cL$ such that $X_{f_1},\dots,X_{f_k}$ form an affine open cover of $X$.   
	
	Consequently, we have $X_{f_i} \simeq \Spec(A_i)$,  or, equivalently, $\Gamma(X_{f_i},\OO_X) \simeq A_i$ as $R$-algebras, where the $A_i$'s are finitely generated algebras over $R$. Let $h_{i,1},\ldots,h_{i,m_i}$ be a generating set of $A_i$ over $R$, then for every $1\leq i\leq k$ and every $1\leq j\leq m_i$ there exist $n_{i,j}\in\N$ such that $f_i^{n_i}h_{i,j} \in S_{\cL}$.
	
	Set $J \deq \{ (i,j)\in \N^2 \mid 1\leq i\leq k\, ,\, 1\leq j\leq m_i \}$ and  $g_{i,j} \deq f_i^{n_{i,j}}h_{i,j}$
	for every $(i,j)\in J$. Consider the polynomial ring $A=R[T_{(i,j)}\mid (i,j)\in J]$ graded by the homomorphism
	\begin{eqnarray*}
		\delta \colon  \Z^{J} & \to & D \\
		e_{i,j} & \mapsto & \delta(\deg g_{i,j})
	\end{eqnarray*}  
	so that the natural homomorphism $A\to S_{\cL,\delta}$ given by $T_{(i,j)}\mapsto g_{i,j}$ is $D$-homogeneous. 
	
	Observe the homomorphism $A\to S_{\cL,\delta}$ above itself  might not be,  the induced homomorphisms of rings $A_{(T_{(i,j)})} \to (S_\cL)_{(g_{i,j})}$ are all  surjective, hence  the corresponding morphisms of schemes $X_{g_{i,j}}\to D_+(T_{i,j})$ are closed embeddings. As the $X_{(g_{(i,j)})}$ form an open cover of $X$, we obtain an embedding $X\hookrightarrow \Proj_DA$. If $X$ is proper then this is a closed embedding.  
\end{proof}

\begin{remark}
Note that in verifying that the morphism $X\to\Proj_DA$  one cannot directly argue by the surjectivity of  $A\to S_{\cL,\delta}$, since $X\not\simeq \Proj_D S_{\cL,\delta}$ in general. 
\end{remark} 

\begin{remark}\label{rem:one ample is enough}
If the ample family $\cL_1,\dots,\cL_r$ contains an actual ample invertible sheaf $\cA$, then it can be arranged that all the sections $g_{(i,j)}$ be global sections of a certain power $\cA^{\otimes m}$. 
\end{remark}
 
\begin{example}[Non-separability for Cox rings of Mori Dream spaces]\label{ex:Proj Cox nonsep}
	The concrete case we are interested in is the following. Let $X$ be a projective variety, $\cL_1,\dots,\cL_r$ a finite collection of line bundles on $X$. We consider the $N^1(X)$-graded multisection ring $S_{\cL,\delta}$, where $\delta\colon \Z \cL_1\oplus\ldots\oplus\Z \cL_r \to N^1(X)$ is the numerical equivalence class map. 

    Let us now assume that $\Pic(X)$ is free of rank $\rho$ and coincides with $\Cl(X)$.  If the collection of line bundles 
    $\cL_1,\dots,\cL_\rho$ forms a basis of $\Pic(X)$, then one defines the Cox ring of $X$ by 
    \[
    \Cox(X;\cL_1,\dots,\cL_r) \deq 
    \bigoplus_{(m_1,\dots,m_\rho)\in\Z^\rho} H^0(X,\cL_1^{\otimes m_1}\otimes\ldots\otimes\cL_\rho^{\otimes m_\rho})\ .
    \]
    
    It is very natural to study $\Proj \Cox(D;\cL_1,\dots,\cL_r)$ via the natural $\Z^\rho$-grading, or, which is equivalent in this case, via $N^1(X)$.  
    
    A surprising obstacle is that  $\Proj_{N^1(X)} \Cox(X)$ is typically not separated, not even in the simplest cases. Therefore,  the open immersion $X\hookrightarrow \Proj^{N^1(X)} \Cox(X)$ is \emph{not} an isomorphism. This is so even if $\Cox(X)$ is finitely generated, i.e. when $X$ is a Mori dream space. 
    
    For a concrete example let $X$ be the blow-up of $\PP^2$ at a point $P$ with exceptional divisor $E$, and $L\subseteq X$ the strict transform of a line in $\PP^2$ not containing $P$. Then the divisor $D\deq L+E$ is relevant, but its complement, which is isomorphic to $\mathbb{A}^2\setminus \{(0,0)\}$, is not affine. Therefore $\Proj^{N^1(X)} \Cox(X)$ is not separated by  Corollary~\ref{cor:isom vs septd}. 
\end{example} 
 
\begin{remark}\label{rem:Goodman}
A result of Goodman, \cite{Goodman}*{Theorem 1},  relates divisors with affine complements to birational models, and so to the minimal model program. 
Goodman's theorem says the following: let $V$ be a complete variety, $U\subseteq V$ an open subset. Then $U$ is affine if and only if $V$ has a closed subscheme $F$ not meeting $U$ such that the inverse image of $X\backslash U$ in the blowing-up $\tilde{V}$ of $V$ along $F$ is the support of an effective ample Cartier divisor. 	

Observe that under the circumstances of \cite{KKL}*{Theorem 5.4} a $D$-MMP does not touch the complement of the divisor $D$.
\end{remark} 

The next result stands in sharp contrast to Example~\ref{ex:Proj Cox nonsep}

\begin{corollary}\label{cor:Proj of the ample cone}
Let $X$ be  a projective variety,  $\cL_1,\dots,\cL_r$ a collection of ample line bundles on $X$. Then $\Proj^{\Z^r} S_\cL\simeq X$, in particular it is separated.  
\end{corollary}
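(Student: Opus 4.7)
The plan is to apply Theorem~\ref{thm:ample families and morphisms} together with Corollary~\ref{cor:isom vs septd} to the identity regrading $\delta=\id_{\Z^r}$. First I would note that the collection $\cL_1,\dots,\cL_r$ is \emph{a fortiori} an ample family in the sense of Borelli: already for a single ample $\cL_i$, the open sets $X_f$ with $f$ a section of some tensor power of $\cL_i$ generate the Zariski topology of $X$, and allowing arbitrary $d\in\N^r$ only enlarges this collection. With $\delta=\id$, Theorem~\ref{thm:ample families and morphisms} then produces a canonical open embedding $r\colon X\hookrightarrow\Proj^{\Z^r}S_\cL$.

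To upgrade this to an isomorphism, I would invoke Corollary~\ref{cor:isom vs septd}(2), which reduces the task to showing that $X_f$ is affine for every relevant homogeneous element $f\in S_\cL$. The key observation is that since $S_\cL$ is graded by $\N^r$, the cone $\sC(f)\subseteq\R^r$ of degrees of homogeneous divisors of powers of $f$ is contained in the closed orthant $\R^r_{\geq 0}$. Relevance of $f$ forces $\intt\sC(f)\neq\emptyset$, and Lemma~\ref{lem:degree in the interior} then places $d\deq\deg(f)$ inside $\intt\sC(f)\subseteq\intt\R^r_{\geq 0}=\R^r_{>0}$. Thus every coordinate of $d$ is strictly positive, so $\cL^d=\cL_1^{\otimes d_1}\otimes\cdots\otimes\cL_r^{\otimes d_r}$ is a tensor product of positive powers of ample line bundles and is therefore itself ample. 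The standard fact that the complement of the zero locus of a section of an ample line bundle on a projective variety is affine now yields the affineness of $X_f$.

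Separatedness of $\Proj^{\Z^r}S_\cL$ comes for free once the open embedding is seen to be surjective, since $X$ is projective. The only step that requires genuine content beyond a direct invocation of the ample-family machinery of Section~4 is the positivity observation that $\deg(f)$ lies in the open positive orthant for every relevant $f$; once that is in hand, the rest is bookkeeping.
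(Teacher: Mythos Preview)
Your proof is correct and follows the same approach as the paper's: invoke Corollary~\ref{cor:isom vs septd} and show that $X_f$ is affine for every relevant $f$ because $\cL^{\deg f}$ is ample. The paper's version is marginally more direct, observing that \emph{any} nonzero $d\in\N^r$ already makes $\cL^d$ ample (a tensor of an ample with non-negative powers of amples), so your detour through Lemma~\ref{lem:degree in the interior} to force strict positivity of every coordinate is harmless but not needed.
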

\begin{proof}
Any element of the ring $S_\cL$ is an ample divisor, and complements of ample divisors are affine. The claim therefore follows from Corollary~\ref{cor:isom vs septd}.
\end{proof}

\begin{corollary}\label{cor:ample local Cox ring}
Let $X$ be a projective variety, $\mC$ a finitely generated local Cox ring contained in the ample cone. Then $\Proj^{N^1(X)} \mC\simeq X$. 	
\end{corollary}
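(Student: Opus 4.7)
The plan is to imitate the proof of Corollary~\ref{cor:Proj of the ample cone}: realise $\mC$ as the section ring of an ample family, obtain an open embedding $X\hookrightarrow\Proj^{N^1(X)}\mC$ via the ample-family machinery, and then upgrade it to an isomorphism through the separatedness criterion of Corollary~\ref{cor:isom vs septd}. Concretely, I would pick integer Cartier generators $D_1,\ldots,D_r$ of the rational polyhedral cone $\mathcal{C}\subseteq\Div_\R(X)$ with $\mC=R(X,\mathcal{C})$ (clearing denominators via Lemma~\ref{lem:3}); since $\mathcal{C}$ is contained in the ample cone, each $\cL_i\deq\OO_X(D_i)$ is ample and $\cL_1,\ldots,\cL_r$ is automatically an ample family, while $\mC\simeq S_\cL$ inherits its $N^1(X)$-grading from the class map $\delta\colon\Z^r\to N^1(X)$, $e_i\mapsto[D_i]$. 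Theorem~\ref{thm:ample families and morphisms} then produces a canonical open embedding $X\hookrightarrow\Proj^{N^1(X)}\mC$.

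To upgrade the embedding to an isomorphism, the projectivity of $X$ allows Corollary~\ref{cor:isom vs septd} to reduce the question to separatedness of $\Proj^{N^1(X)}\mC$, equivalently to affineness of $X_f$ for every $f\in\Rel^{N^1(X)}(\mC)$. I would attack the separatedness using Proposition~\ref{prop: separated open subsets}. All homogeneous elements of $\mC$ have degree inside $\delta(\mathcal{C})$, so any cone $\sC(f)$ lies in the convex open ample cone of $X$, and Lemma~\ref{lem:degree in the interior} places $\delta(\deg f)$ in the interior of $\sC(f)$. For any pair of relevant elements $f,g$ one can then force $\intt(\sC(f)\cap\sC(g))\neq\emptyset$ by passing to products involving a common relevant anchor such as $D_1\cdots D_r$ (relevant by Remark~\ref{rem:product is relevant}), after which Proposition~\ref{prop: separated open subsets} delivers separatedness and hence the isomorphism.

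The main obstacle is precisely this cone-overlap step: a relevant element of $\mC$ in the $N^1(X)$-grading is in general a formal sum of sections of numerically equivalent but distinct ample line bundles, so $X_f$ can a priori be a union of complements of ample divisors rather than the complement of a single one, and two cones $\sC(f),\sC(g)$ coming from different relevant elements need not automatically share a full-dimensional interior. The key has to come from exploiting convexity of the ample cone together with the multiplicative stability of relevance (Remark~\ref{rem:relevant and cone}) to arrange a cover of $\Proj^{N^1(X)}\mC$ by sets $D_+(f)$ whose associated cones pairwise intersect in full-dimensional subcones; once this combinatorial point is in place, the argument is a direct upgrade of Corollary~\ref{cor:Proj of the ample cone} to the coarser $N^1(X)$-grading.
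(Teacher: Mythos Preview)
The paper provides no proof for this corollary; it is placed directly after Corollary~\ref{cor:Proj of the ample cone} and is evidently meant to follow by the same two-line argument (every homogeneous element is a section of an ample line bundle, complements of ample divisors are affine, apply Corollary~\ref{cor:isom vs septd}). Your proposal is therefore considerably more careful than the paper's own treatment, and you have correctly located a genuine subtlety: after regrading by $N^1(X)$, a homogeneous element $f$ can be a formal sum $f=\sum_j s_j$ of sections of numerically equivalent but distinct ample line bundles, so that $X_f=\bigcup_j X_{s_j}$ is a union of affine opens and need not itself be affine. The paper's implicit argument does not address this.

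Your proposed remedy via Proposition~\ref{prop: separated open subsets} is the natural route, but the ``anchor'' step does not close the gap as written: that proposition requires a \emph{single} covering family $\{D_+(f_i)\}$ with pairwise overlapping cone interiors, and multiplying a given pair $f,g$ by an anchor $h$ changes the open sets themselves, so one cannot repair pairs after the fact. There is moreover a sharper obstruction lurking beneath your obstacle: if the cone $\mathcal C\subseteq\Div_\R(X)$ has $\dim\mathcal C>\rho(X)$ (equivalently, contains two distinct numerically equivalent divisors), then Lemma~\ref{lemma:dimension} combined with Corollary~\ref{cor:Proj of the ample cone} gives $\dim\Proj^{N^1(X)}\mC=\dim X+\dim\mathcal C-\rho(X)>\dim X$, and the asserted isomorphism is impossible on dimension grounds alone. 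The corollary thus tacitly requires that $\mathcal C\to N^1(X)_\R$ be injective; under that hypothesis the regrading $\delta$ has finite kernel, $N^1(X)$-homogeneous elements coincide with the original homogeneous ones, the complication you anticipated disappears, and the statement reduces immediately to Corollary~\ref{cor:Proj of the ample cone}.
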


We end the section by constructing closed embeddings into simplicial toric varieties.

\begin{theorem}\label{thm:closed embedding into separable}
Let $X$ be a scheme of finite type over a noetherian ring $R$, and $\cL_1,\dots,\cL_r$ a collection of invertible sheaves on $X$, and let $\delta\colon \Z^r\to D$ be a surjective homomorphism of abelian groups. Write $\rho\deq \rk\, D$. 

Assume that there exist members of the family $\cA_1,\dots,\cA_\rho$ with the following list of properties.
\begin{enumerate}
	\item $\delta(\cA_1),\dots,\delta(\cA_\rho)$ form a $\QQ$-basis of $D$. 
	\item For every $1\leq i\leq \rho$ there exist global sections $s_{i,1},\dots,s_{i,j_i}\in \Gamma(X,\cA_i)$ such that for any given $1\leq i\leq \rho$  there exists a finite  affine open cover $X_1,\dots,X_m$ of $X$ such that for every $1\leq l\leq m$  the images of the $s_{(i,u)}$'s in $\Gamma(X_l,\cO_{X})$ generate the ring $\Gamma(X_l,\cO_X)$. 
\end{enumerate}	
Then there exists an embedding $X\hookrightarrow \Proj_D R[T]$ into the multigraded spectrum of a multivariate polynomial ring where the latter is separated. This embedding is closed if $X$ is proper. 
\end{theorem}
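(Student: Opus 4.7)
The approach is to adapt the argument of Theorem~\ref{thm:ample families and morphisms when fin gen}, but with the grading on the ambient polynomial ring engineered so that the multihomogeneous spectrum is automatically separated. Concretely, introduce the polynomial algebra
\[
A \deq R\bigl[T_{(i,u)} \mid 1\leq i\leq \rho,\ 1\leq u\leq j_i\bigr]\ ,
\]
$D$-graded by declaring $\deg T_{(i,u)} \deq \delta(\cA_i)$, together with the $D$-graded $R$-algebra morphism $\phi\colon A \to S_{\cL,\delta}$ sending $T_{(i,u)}\mapsto s_{(i,u)}$. The claim is that $\Proj_D A$ is separated and that the sections $s_{(i,u)}$ produce an immersion $X\hookrightarrow \Proj_D A$ in the manner of the cited theorem.

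Separatedness of $\Proj_D A$ is the crucial preliminary point, and it is built into the grading. Since $\delta(\cA_1),\dots,\delta(\cA_\rho)$ is a $\QQ$-basis of $D$, a monomial in $A$ is $D$-relevant precisely when it involves at least one $T_{(i,u)}$ for every $i$, and every such relevant monomial has cone equal to the full simplicial cone $\sigma \deq \R_{\geq 0}\langle \delta(\cA_1),\dots,\delta(\cA_\rho)\rangle$ (cf. Lemma~\ref{lem:degree in the interior} and Remark~\ref{rem:product is relevant}). The distinguished opens $D_+\bigl(T_{(1,u_1)}\cdots T_{(\rho,u_\rho)}\bigr)$ therefore cover $\Proj_D A$, and their cones pairwise coincide with $\sigma$; Proposition~\ref{prop: separated open subsets} then yields separatedness. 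By Proposition~\ref{prop:torus} the scheme $\Proj_D A$ is moreover a simplicial separated torus embedding.

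Next I would verify that the rational map $r = r_{\cB,\phi}\colon X \dashrightarrow \Proj_D A$ provided by Proposition~\ref{prop:morphisms to spectra} is defined everywhere and is an immersion. Given $x\in X$, hypothesis (2) produces for each $i$ an index $u_i$ with $s_{(i,u_i)}(x)\neq 0$; then $f\deq T_{(1,u_1)}\cdots T_{(\rho,u_\rho)}$ is relevant by the above and $x\in X_{\phi(f)}$. Thus $r$ extends to a morphism by Corollary~\ref{cor:rtl map morphism}. To see it is a locally closed immersion, restrict to $X_{\phi(f)}\cap X_l$ for $X_l$ in a common refinement of the per-index affine covers furnished by (2); the induced ring map $A_{(f)}\to\Gamma(X_{\phi(f)}\cap X_l,\OO_X)$ should be surjective, because hypothesis (2) forces the ratios $\phi(T_{(i,u)})/\phi(T_{(i,u_i)})^{e_i}$ (for suitably chosen powers $e_i$ making them homogeneous of degree zero) to generate the target as an $R$-algebra, exactly as in the proof of Theorem~\ref{thm:ample families and morphisms when fin gen}. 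If $X$ is proper over $R$, then $X\to\Proj_D A$ is proper into a separated target, hence closed.

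The principal obstacle, as I see it, is the surjectivity check in the previous paragraph: hypothesis (2) furnishes affine covers and generation \emph{separately} for each index $i$, but what is actually needed on a given chart $X_{\phi(f)}$ is simultaneous generation by ratios across all $\rho$ indices. This forces one to take a common refinement of the per-$i$ covers and to balance the exponents $e_i$ so that every ratio $\phi(T_{(i,u)})/\phi(T_{(i,u_i)})^{e_i}$ lands in $(S_{\cL,\delta})_{(\phi(f))}$. Once this combinatorial bookkeeping is carried out, the remaining ingredients — separatedness of the target, extension of the rational map, and properness upgrading an immersion to a closed embedding — are formal consequences of the machinery already assembled.
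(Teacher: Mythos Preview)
Your approach is correct and runs parallel to the paper's, with one substantive difference in how the ambient polynomial ring is chosen. The paper introduces a variable $T_I$ for each tuple $I=(l_1,\ldots,l_\rho)$ and sends $T_I\mapsto t_I\deq\prod_i s_{(i,l_i)}$, so that all generators share the single degree $\sum_i\delta(\cA_i)$; you instead take one variable $T_{(i,u)}$ per individual section with $\deg T_{(i,u)}=\delta(\cA_i)$. Your choice makes the separatedness step more transparent: the squarefree products $\prod_i T_{(i,u_i)}$ are visibly relevant with cone exactly $\sigma$, and they cover $\Proj_D A$ because every homogeneous element whose degree has positive $\delta(\cA_{i_0})$-coefficient lies in the ideal $(T_{(i_0,u)}:u)$, so that $A_+\subseteq\bigcap_{i_0}(T_{(i_0,u)}:u)$. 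It also dissolves the obstacle you flag at the end. Since $\deg T_{(i,u)}=\deg T_{(i,u_i)}$, the element $T_{(i,u)}\cdot\prod_{j\neq i}T_{(j,u_j)}/f$ already lies in $A_{(f)}$ and maps to $s_{(i,u)}/s_{(i,u_i)}$; no exponent balancing is required, and the per-index covers of condition~(2) do not need to be refined against one another. Surjectivity of $A_{(f)}\to\Gamma(X_{\phi(f)},\OO_X)$ then follows because condition~(2) for each fixed $i$ makes $X_{s_{(i,u_i)}}\to\Spec R[T_{(i,u)}/T_{(i,u_i)}:u]$ an immersion, and $X_{\phi(f)}=\bigcap_i X_{s_{(i,u_i)}}$ sits inside the product of these affine charts.
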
 
\begin{proof}
Consider the set of global sections
\[
t_I \deq \prod_{1\leq i\leq \rho} s_{(i,l_i)}\ \in \ \Gamma(X,\cA_1\otimes\ldots\otimes\cA_\rho)\ ,
\]	
where $1\leq l_i\leq j_i$, and assume that $I$ runs through all possible combinations of indices. By condition $(2)$ and 
the equality $D_+{(t_I)} = D_+(s_{(1,l_1)})\cap\ldots\cap D_+(s_{(\rho,l_\rho)})$, the open sets $D_+(t_I)$ for all values of $I$ form an affine open cover of $X$.  	

As in the proof of Theorem~\ref{thm:ample families and morphisms when fin gen}, we work with the polynomial ring $R[T] \deq R[T_I\mid \text{all possible values of $I$}]$, and observe that by grading $R[T]$  by the homomorphism
	\begin{eqnarray*}
		\delta \colon  \Z^{J} & \to & D \\
		e_{i,j} & \mapsto & \delta(\deg g_{i,j})
	\end{eqnarray*}  
the natural homomorphism $R[T]\to S_{\cL,\delta}$ sending  $T_{(i,j)}\mapsto g_{i,j}$ is surjective and $D$-homogeneous. 
The conditions on the $s_{i,j}$'s  for a given  $1\leq i\leq \rho$ yield that the morphism of schemes $X_{t_I}\to D_+(T_I)$
is a closed embedding, hence the resulting morphism $X\hookrightarrow \Proj_D R[T]$ will be an embedding (again, closed if $X$ is proper over $\Spec R$). 

We are left with verifying the separability of $\Proj_DR[T]$. By construction
\[
\deg(T_I) \equ \delta(\deg(t_I)) \equ \delta(s_{1,l_1}) + \ldots +  \delta(s_{\rho,l_\rho}) \ .
\]
Because $\delta(\cA_1),\dots,\delta(\cA_\rho)$ form a $\QQ$-basis of $D$ by condition $(1)$, the interior $\sC(T_I)\subseteq D_\R$ is non-empty, hence $T_I\in\Rel_D(R[T])$. At the same time 
\[
\sC(T_I) \equ \text{closed convex hull of $\delta(\cA_1),\dots,\delta(\cA_\rho)$} \dsubseteq D_\R
\]
independetly of the choice of $I$, and so $\intt\, \sC(T_I)\cap \sC(T_{I'}) \neq \emptyset$. According to Proposition~\ref{prop: separated open subsets} $\cup_{I} D_+(T_I)\subseteq \Proj_DR[T]$ is a separated open subset, but the two are equal since the $D_+(T_I)$ are known to be an open covering of $\Proj_DR[T]$. 
\end{proof}

\section{Gen divisorial rings and the Mori chamber decomposition in the local setting}

Finite generation of multigraded divisorial rings is a central question in algebraic geometry which lies  at the heart of Mori's  minimal model program. It is a fundamental problem in this regard  that while finite generation of  section rings behaves well with respect to linear equivalence, this is false  for numerical equivalence of divisors. This is the source of the relevance for gen divisors, originally defined in \cite{KKL}*{Definition 4.7}. This section is  expository, mostly what it does is to recall the relevant material from \cite{KKL}. 

\begin{example}[Finite generation is not invariant with respect to numerical equivalence]\label{eg:fin gen not invariant num}
A simple example of this phenomenon is shown in \cite{PAGII}*{Example 10.3.3}, which we now recall. Let $C$ be a smooth elliptic curve, and let $A$ be an ample divisor of degree one on $C$. Consider the smooth projective surface  $X :=\PP_C(\OO_C\oplus \OO_C(A))$ realized as  a projective bundle with projection morphism  $p \colon X\to C$. 

Let $P_1, P_2$ be  divisors on $C$ such that  $P_1$ is torsion, and $P_2$ is non-torsion of  degree $0$. Consider $L_i:=  \OO_X(1)\otimes p^*\OO_{C}(P_i)$. Then $L_1$ and $L_2$ are numerically equivalent nef and big line bundles  for which  $\sB(L_1)$ is empty while $\sB(L_2)$ is not, in particular,  $R(X, L_1)$ is finitely generated while $R(X, L_2)$ is not by the Zariski--Wilson theorem \cite{PAGI}*{Theorem 2.3.15}
\end{example}

\begin{definition}\label{defn:gen}
	Let X be a $\QQ$-factorial projective variety. We say that a divisor $\mD \in \rm{Div}_{\QQ}(X)$ is \emph{gen} if for all $\QQ$-Cartier $\QQ$-divisors $\mD' \equiv \mD$, the section ring $R(X,\mD')$ is finitely generated.
\end{definition} 

\begin{remark}
As observed in \cite{KKL}, there are three main sources of gen divisors.
\begin{itemize}
	\item[(i)] Ample $\QQ$-divisors on arbitrary varieties.
	\item[(ii)] Adjoint divisors $K_X + \Delta +\cA$ on normal projective varieties, where $\Delta$ is an effective $\QQ$-divisor and $\cA$ is an ample $\QQ$-divisor on $X$, and	the pair $(X, \Delta)$ is klt. 
	\item[(iii)] Arbitrary divisors with finitely generated section rings on varieties with   $\Pic (X)_{\QQ} = N^1(X)_{\QQ}$. 
\end{itemize}
Note that the latter class includes Mori Dream Spaces. 
\end{remark}
 
\begin{remark}
In the other direction, a clear obstruction for a $\QQ$-divisor $\mD$ to be gen is  if $\vol{X}{\mD}$ is an irrational number \cites{AKL,BN,KLM_volumes}.
\end{remark}

	\begin{definition} \cite{HK}*{2.12} Let $C \subset {\rm NE}^1(X)$ be the affine hull of finitely many effective
		divisors. We say that $C$ is a Mori dream region provided the following hold:
		\begin{enumerate}
			\item there exists a finite collection of birational contractions $f_i : X \dashrightarrow Y_i$ such
			that $C_i := C \cap f^*({\rm Nef}(Yi)) \times \rm{ ex}(f_i)$ is the affine hull of finitely many effective
			divisors;
			\item $C$ is the union of the $C_i$; and
			\item any line bundle in $(f_i)_*(C_i) \cap \Nef(Y_i)$ is semi-ample.
			
		\end{enumerate}
	\end{definition}
	
	The following result of \cite{HK} is the starting point of our main result. The authors assume that $\rm{Pic}(X)_{\QQ} = N^1(X)_{\QQ}$. With the extra notion of gen divisors we are to generalize the result to a larger class of varieties.
	
	\begin{theorem} \cite{HK}*{2.13} Let X be a normal projective variety with  $\rm{Pic}(X)_{\QQ} = N^1(X)_{\QQ}$, and let $C \subset {\rm N}^1(X)$ be
		a rational polyhedral cone (i.e., the affine hull of the classes of finitely many line
		bundles). Then $C \cap {\rm NE}^1(X)$
		is a Mori dream region if and only if there are generators
		$\cL_1,...,\cL_r$ of $C$ such that ${\rm R}(X; \cL_1,\ldots,\cL_r)$ is finitely generated.
	\end{theorem}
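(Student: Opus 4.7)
My plan is to treat the two directions separately, leveraging the structural results that have already been established: Theorem~\ref{thm:ELMNP} on polyhedral subdivisions of supports of finitely generated divisorial rings, Corollary~\ref{cor:5} identifying the movable locus inside such a support, and the criterion for semi-ampleness in Lemma~\ref{lem:ords}. The hypothesis $\Pic(X)_\QQ=N^1(X)_\QQ$ will be used to transfer finite generation statements from individual line bundles to numerical classes and back.

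For the implication ``finite generation $\Rightarrow$ Mori dream region'', set $\mathfrak R\equ R(X;\cL_1,\dots,\cL_r)$ and $C\equ\sum\R_+[\cL_i]$. By Theorem~\ref{thm:ELMNP}, $\Supp\mathfrak R$ decomposes as a finite union of rational polyhedral chambers $\mcal C_j$ on which every asymptotic order $o_\Gamma$ is linear, and by Corollary~\ref{cor:5} there is a subset $I_1$ such that $\Supp\mathfrak R\cap\pi^{-1}\bigl(\ov{\Mov}(X)\bigr)\equ\bigcup_{j\in I_1}\mcal C_j$, each maximal chamber being moreover contained in the pullback of the nef cone of some birational model. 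To each maximal $\mcal C_j$ I would attach a birational contraction $f_j\colon X\dashrightarrow Y_j$ via the ample model construction: Theorem~\ref{thm:ELMNP}(4) supplies a resolution on which $\Mob f^*(d\mD)$ is basepoint free, and taking $\Proj$ of the associated finitely generated ring produces $Y_j$, a $\QQ$-factorial model on which the pushforward of a class in the interior of $\mcal C_j$ is nef and big with finitely generated section ring, hence semi-ample by Lemma~\ref{lem:ords}(ii). Collecting the cones $C_j \equ C\cap f_j^*(\Nef(Y_j))\times\operatorname{ex}(f_j)$ verifies conditions (1)--(3) of the definition of a Mori dream region.

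For the converse, assume $C\cap\NE^1(X)$ is a Mori dream region with contractions $f_i\colon X\dashrightarrow Y_i$ and cones $C_i$. I would choose generators $\cL_1,\dots,\cL_r$ of $C$ compatible with the decomposition $C\equ\bigcup_i C_i$, so that each generator either pushes forward to a semi-ample divisor on some $Y_i$ or is the class of an exceptional divisor of $f_i$. Each ring $R(Y_i;\,\text{pushforward generators})$ is finitely generated by Lemma~\ref{lem:ords} since the generators are semi-ample. I would then assemble $R(X;\cL_1,\dots,\cL_r)$ as a finite sum of rings, each of which is obtained from such a ring on $Y_i$ by adjoining the finitely many classes of $f_i$-exceptional divisors; since $f_i$ is birational, pulling back sections yields a finite extension, preserving finite generation.

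\paragraph{} The principal obstacle, and the reason the hypothesis $\Pic(X)_\QQ\equ N^1(X)_\QQ$ is built into the statement, is that the Mori dream region condition is phrased numerically while the ring $R(X;\cL_1,\dots,\cL_r)$ depends on actual line bundles. Under the Picard--N\'eron--Severi identification, finite generation becomes a numerical invariant, so one may freely replace representatives in a given numerical class when constructing the models $Y_j$ or pulling generators back from them; Example~\ref{eg:fin gen not invariant num} shows that this bridge genuinely fails in general, and indeed it is precisely this failure that motivates the gen divisors introduced later in the paper and exploited in Theorem~A. A secondary technical point, which I would address by invoking Theorem~\ref{thm:ELMNP}(4), is the need to work on a common resolution where mobile parts of pullbacks of all relevant divisors become basepoint free simultaneously, so that the finitely many $Y_j$ can be produced uniformly rather than one chamber at a time.
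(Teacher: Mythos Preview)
The paper does not contain a proof of this statement: it is quoted as \cite{HK}*{2.13} with the citation built into the theorem header, and the text proceeds directly to an example without argument. There is thus no proof in the paper to compare your proposal against.

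On the mathematics itself: your forward direction is sound and is essentially the argument behind Theorem~\ref{chambers} in the present paper, specialised to the case $\Pic(X)_\QQ=N^1(X)_\QQ$ where every divisor is automatically gen; the tools you invoke (Theorem~\ref{thm:ELMNP}, Corollary~\ref{cor:5}, Lemma~\ref{lem:ords}) are exactly the right ones. Your converse, however, has a genuine gap. The phrase ``assemble $R(X;\cL_1,\dots,\cL_r)$ as a finite sum of rings'' does not describe a valid argument: the rings $R(Y_i;\ldots)$ live over different varieties, and a finite sum of finitely generated subrings of a given ring need not itself be finitely generated. What the Mori dream region hypothesis gives you directly is that for each $\mD\in C_i$ the contraction $f_i$ identifies $R(X,\mD)$ with $R(Y_i,(f_i)_*\mD)$, hence controls the Veronese subrings along individual rays; it does not by itself control the multiplicative structure across chambers. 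The arguments in \cite{HK} (via GIT) and in \cite{KKL} (via a common resolution on which all mobile parts become simultaneously free, as in Theorem~\ref{thm:ELMNP}(4), followed by a direct finite-generation argument for the resulting free graded linear series) are what close this gap. You gesture at the common-resolution idea in your final sentence, but only in the forward direction; it is precisely in the converse where it is needed.
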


\begin{example}[Non-example for Mori dream regions]
As described  in \cite{KKL}*{Example 4.8}, the surface  in Example~\ref{eg:fin gen not invariant num} provides an example which is problematic from the point of view of Mori dream regions (cf. \cite{HK}*{Proposition 2.13}).  One can show that there exist a big divisor $\mD$ and an ample divisor $\cA$ on $X$ such that the ring $R(X;\mD,\cA)$ is finitely generated, the divisor $L_1$ belongs to the interior of the cone $\Supp\mathfrak R=\R_+D+\R_+\cA$, and no divisor in the cone $\R_+D+\R_+L_1\subseteq\Supp\mathfrak R$ is gen. In particular, we cannot perform the MMP for $D$.	
\end{example}

	Our first aim is to state a general version of the previous Theorem, with  the notations introduced in \cite{KKL}. The additional input will come from Theorem \cite{KKL}*{Theorem 5.4}. 
	
	Let us denote by $\pi$ the natural projection $$\pi:{\rm Div}(X) \to {\rm N}^1(X).$$
	One first observation is that the preimage of a strictly convex cone in ${\rm N}^1(X)$ might not be strictly convex. And in general it will not be. Hence we will define the main objects starting from ${\rm Div}(X)$.

	\begin{theorem} \label{chambers}
		Let X be a $\QQ$-factorial projective variety. Let $\mC$ be a rational polyhedral cone in ${\rm Div}(X)$ spanned by the divisors $\mD_1, \ldots, \mD_k$. Assume that $\mathfrak{R} =R(X; \mD_1, \ldots, \mD_k)$ is a finitely generated ring, that $C = \pi({\rm Supp}(\mathfrak{R}))$ spans ${\rm N}^1(X)$ and that ${\rm Supp}(\mathfrak{R})$ contains an ample divisor. Let us also assume that every divisor in the interior of $({\rm Supp})\mathfrak{R}$ is gen. Then we have the following.
		\begin{enumerate}
			
			\item There is a finite decomposition $${\rm Supp}\mathfrak{R}= \bigsqcup \mathfrak{N}_i,$$ that induces a finte decomposition of  $C$, given by $$C= \bigsqcup \pi(\mathfrak{N}_i).$$

			\item Each $\overline{\mathfrak{N}}_i$ and $\overline{\pi(\mathfrak{N}}_i)$ are rational polyhedral cones.
			
			\item for each $i$ there exists a $\mathbb{Q}$-factorial projective variety $X_i$ and a birational contraction $\varphi_i: X \dashrightarrow X_i$, such that for every $D$ in $\mathfrak{N}_i$, then $\varphi_i$ is $D$-negative and $\varphi_{i,*}D$ is semi-ample.
			
			\item for each $i$ we have that $$\mathfrak{N}_i = \pi^{-1}\left((\varphi^*{\rm Nef}(X_i)) \times {\rm ex}\varphi_i\right) \cap {\rm Supp}\mathfrak{R}.$$
			
		\end{enumerate}
	\end{theorem}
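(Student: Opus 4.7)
The plan is to bootstrap from Theorem~\ref{thm:ELMNP} and Corollary~\ref{cor:5}, using the gen hypothesis to descend the chamber picture from $\Div(X)$ down to $N^1(X)$. First I apply Theorem~\ref{thm:ELMNP} to $\mathfrak{R}$ to obtain a finite rational polyhedral subdivision $\Supp\mathfrak{R}=\bigcup\mcal{C}_j$ on which every asymptotic order function $o_\Gamma$ is linear, together with a common resolution $f\colon\widetilde{X}\to X$ and an integer $d>0$ for which $\Mob f^{*}(dD)$ is basepoint free on $\Supp\mathfrak{R}\cap\Div(X)$. Corollary~\ref{cor:5} then identifies $\Supp\mathfrak{R}\cap\pi^{-1}(\Nef(X))$ as a union of cells $\mcal{C}_j$ consisting entirely of semiample divisors; this is the chamber corresponding to $\varphi_i=\id_X$.

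Second, for each remaining maximal dimensional cell I pick a divisor $D$ from its interior and run a $D$-MMP with scaling. The gen assumption propagates finite generation along the numerical class and, together with the fact that $\Supp\mathfrak{R}$ contains an ample divisor, it guarantees that the program terminates at a $\QQ$-factorial projective model $\varphi_i\colon X\dashrightarrow X_i$ for which $\varphi_i$ is $D$-negative and $\varphi_{i,*}D$ is nef and big, hence semiample by Lemma~\ref{lem:ords}(ii). Grouping or refining cells so that each chamber $\mathfrak{N}_i$ corresponds to a single pair $(X_i,\varphi_i)$ produces the finite list in (3). The polyhedral rationality of $\overline{\mathfrak{N}}_i$ claimed in (2) follows because $\mathfrak{N}_i$ is cut out by the rational polyhedral cone $\varphi_i^{*}\Nef(X_i)$ together with the linear inequalities coming from the finitely many exceptional prime divisors of $\varphi_i$, which is also exactly the content of (4).

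Third, I pass to $C=\pi(\Supp\mathfrak{R})\subseteq N^1(X)$. For $C=\bigsqcup\pi(\mathfrak{N}_i)$ to be a well defined decomposition each $\mathfrak{N}_i$ must be saturated under numerical equivalence, and this is where the main obstacle lies: asymptotic order functions behave well with respect to linear, not numerical, equivalence, and without extra input saturation simply fails, as in Example~\ref{eg:fin gen not invariant num}. The gen hypothesis remedies this precisely: if $D\in\mathfrak{N}_i$ and $D'\equiv D$ lies in $\Supp\mathfrak{R}$, then $R(X,D')$ is finitely generated, the MMP output for $D'$ coincides with that of $D$ up to small modification, and hence $D'\in\mathfrak{N}_i$. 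With $\pi$-saturation in hand, the induced decomposition of $C$ is automatically finite with rational polyhedral closures, and the remaining verifications are routine bookkeeping on top of \cite{KKL}*{Theorem 5.4}.
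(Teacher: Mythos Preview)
Your broad outline is essentially the content of \cite{KKL}*{Theorem 5.4}: the paper's proof simply cites that result for parts (1)--(3) and says nothing more about them, so your first two paragraphs are a (reasonable but unnecessary) sketch of why that theorem holds. The only genuinely new assertion here is part (4), and this is exactly where your proposal has a gap.

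You define $\mathfrak{N}_i$ by grouping together cells whose $D$-MMP terminates at the same model $(X_i,\varphi_i)$, and then \emph{assert} that this coincides with $\pi^{-1}\bigl(\varphi_i^{*}\Nef(X_i)\times\mathrm{ex}\,\varphi_i\bigr)\cap\Supp\mathfrak{R}$, treating (4) as a reformulation of your construction rather than a statement requiring proof. One inclusion is indeed easy: if $D\in\mathfrak{N}_i$ then $\varphi_{i,*}D$ is semiample, hence nef. The non-trivial direction is the opposite one: you must rule out the possibility that some $\overline{D}\in\Supp\mathfrak{R}\setminus\mathfrak{N}_i$ nonetheless has $\varphi_{i,*}\overline{D}\in\Nef(X_i)$. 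Nothing in your argument excludes this, and it does not follow from the MMP for $\overline{D}$ terminating elsewhere. The paper handles precisely this point: for such an $\overline{D}$, condition $(\natural)$ in \cite{KKL}*{Theorem 5.4} produces a hyperplane through a codimension-one face of $\mathfrak{N}_i$ separating $\overline{D}$ from $\mathfrak{N}_i$, and then \cite{KKL}*{Theorem 5.2} forces $\varphi_{i,*}\overline{D}\notin\Nef(X_i)$. Without this separating-hyperplane step your argument for (4), and hence your derivation of (2) from (4), is incomplete.
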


	\begin{proof}
		The only difference from  \cite{KKL}*{Theorem 5.4} is statement $(4)$. This is the key to generalize \cite{HK}*{Proposition 1.11}. Obviously by $(3)$, $\varphi_{*}(\pi \mathfrak{N}_i) \subseteq {\rm Nef}(X_i)$, hence we only need to make sure that there is no other element in ${\rm Supp}\mathfrak{R}$ mapping to ${\rm Nef}(X_i)$. 
		
		Let us consider a divisor $\overline{\mD} \in {\rm Supp}\mathfrak{R} \backslash \mathfrak{N}_i$. Then, by condition $(\natural)$ in \cite{KKL}*{Theorem 5.4} we know that there is a hyperplane $\mathcal{H}$ containing a codimension one face of $\mathfrak{N}_i$, such that $\overline{\mD}$ and $\mathfrak{N}_i$ lie in the opposite side with respect to $\mathcal{H}$.
		
		In particular by \cite{KKL}*{Theorem 5.2}, $\overline{\mD}$ does not map to ${\rm Nef}(X_i)$.
		
	\end{proof}


\section{Ample families and embeddings into toric varieties}

In this section we prove our main result.  Given a $\QQ$-factorial normal projective variety $X$, and a finitely generated section ring as in Theorem \ref{chambers}, we will first describe an embedding $X \subseteq W$, a simplicial projective toric variety. We will prove that every small $\QQ$-factorial modification (SQM) of the MMP for $W$ induces a SQM for $X$. Conversely, SQM's coming from Mori dream regions are induced from toric SQM's on $W$. 

In order to achieve such a result we have modified  some of the main results in \cite{BS} in Section 4. Again, the results of \cite{BS}*{Section 4} are not  directly applicable, since the geometrically relevant grading (by the N\'eron--Severi group) does not make the collection of line bundles we consider into an ample family.

Let $\mC\subseteq \Div(X)$ be a finitely generated cone containing an ample divisor, and let $\mC'\deq \Supp(\mC)$. According to \cite{ELMNP} (see also \cite{KKL}*{Theorem 3.2}), the cone $\mC'$ is also finitely generated, hence so is $\Cox(\mC')$ by Gordan's lemma. Let  $\mD_1,\dots,\mD_k$ be a set of generators  of $\Cox(\mC')$, then the $\mD_i$'s also span the cone $\mC'$. 

Observe that  $\mC'$ comes equipped with two different gradings:
\begin{enumerate}
	\item $\N \mD_1\oplus\ldots\oplus \N \mD_k$, where $\mD_1,\dots,\mD_k$ are effective divisors (that is, homogeneous elements) generating $\mC'$,
	\item the N\'eron--Severi group $N^1(X)$. 
\end{enumerate}

\begin{lemma}
Consider a homogeneous element $f\in\mC'$ which is relevant with respect to $N^1(X)$. Then $f$ is a big divisor. 	
\end{lemma}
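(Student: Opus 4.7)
The plan is to combine Lemma~\ref{lem:degree in the interior} with the observation that the entire cone $\sC(f)$ sits inside the pseudo-effective cone, so its interior must lie in the big cone. Concretely, I would first unpack what $N^1(X)$-relevance means for $f$: the cone $\sC(f)\subseteq N^1(X)_\R$ spanned by classes of homogeneous divisors of powers of $f$ has nonempty interior, and by Lemma~\ref{lem:degree in the interior} the class $\deg(f)\in N^1(X)_\R$ itself lies in $\intt\sC(f)$.

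Next I would check that $\sC(f)\subseteq\Effb(X)$. Every generator of $\sC(f)$ by definition comes from a homogeneous element $g\in S=\Cox(\mC')$ dividing some power $f^m$. Since $S$ is built from global sections of divisors in the effective cone $\mC'$, any nonzero $N^1(X)$-homogeneous element is a (finite) sum of nonzero global sections of effective divisors numerically equivalent to a single class; in particular the class $\deg(g)\in N^1(X)$ is represented by an effective divisor, hence lies in $\Effb(X)$. Taking the closed convex cone generated by such classes, we obtain $\sC(f)\subseteq\Effb(X)$.

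Finally, since $\intt\sC(f)$ is an open subset of $N^1(X)_\R$ contained in $\Effb(X)$, it must lie inside the Euclidean interior of the pseudo-effective cone, which is precisely the big cone $\Bbig(X)$. Together with $\deg(f)\in\intt\sC(f)$ from the first step, this gives $\deg(f)\in\Bbig(X)$, that is, $f$ is a big divisor.

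I do not expect serious obstacles here: the only point that needs some care is interpreting ``$N^1(X)$-homogeneous divisors of powers of $f$'' as elements whose degree sits in $\Effb(X)$, and this is immediate once one remembers that homogeneous pieces of $\Cox(\mC')$ consist of sums of sections of (numerically equivalent) effective divisors. The rest is the soft topological fact that an open subset of a closed convex cone with nonempty interior in $N^1(X)_\R$ is contained in the interior of that cone.
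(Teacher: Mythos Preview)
Your proposal is correct and follows essentially the same route as the paper: use Lemma~\ref{lem:degree in the interior} to place $\deg(f)$ in $\intt\sC(f)$, observe that $\sC(f)\subseteq\Effb(X)$ because all homogeneous divisors of powers of $f$ in $\Cox(\mC')$ have effective numerical class, and conclude that $\intt\sC(f)$ lies in the big cone. The paper's proof is terser (it simply asserts ``by construction $\sC(f)\subseteq\overline{\Eff}(X)$''), whereas you spell out why, but the argument is the same.
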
	
\begin{proof}
As $f$ is relevant, its cone $\sC(f)\subseteq G_2\otimes_\Z \R = N^1(X)_\R$ is a closed convex cone of maximal dimension. By construction $\sC(f)\subseteq \overline{\Eff}(X)_2$, therefore 
$\intt \sC(f) \subseteq \intt \overline{\Eff}(X)_2$. According to Lemma~\ref*{lem:degree in the interior} $\deg_{G_2}(f)\in \intt \sC(f)$, so we are done. 
\end{proof}

In \cite{BS} the authors focus on the grading $\N \mD_1\oplus\ldots\oplus \N \mD_k$. In this section we will always use the N\'eron--Severi grading and implicitly compare it with the first grading by the following Lemma.

\begin{lemma}\label{lem: C prime is an ample family}
	With notation as above, if $\mC'$ contains an ample divisor, then the collection $\OO_X(\mD_1),\ldots, \OO_X(\mD_k)$ of line bundles forms an ample family.
\end{lemma}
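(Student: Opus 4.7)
The plan is to verify the defining property of an ample family directly: for every point $x \in X$ and every open neighbourhood $U$ of $x$, I would produce a degree $d \in \N^k$ and a global section $f \in \Gamma(X, \cL^d)$ with $x \in X_f \subseteq U$. The strategy is to reduce to an ample divisor $A \in \mC'$ whose existence is guaranteed by hypothesis, and then identify sections of sufficiently divisible powers of $A$ with sections of $\cL^d$ for appropriate $d \in \N^k$.

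First, since $\mC'$ is a rational polyhedral cone spanned by $\mD_1, \ldots, \mD_k$ and the rational divisor $A$ lies in $\mC'$, convex geometry gives non-negative rationals $q_1, \ldots, q_k$ with $A = \sum q_i \mD_i$. Clearing denominators yields a positive integer $m$ and non-negative integers $a_1, \ldots, a_k$ such that $m A = a_1 \mD_1 + \ldots + a_k \mD_k$ as honest divisors. This identifies
\[
\Gamma\bigl(X, \cL^{(n a_1, \ldots, n a_k)}\bigr) \equ H^0\bigl(X, n m A\bigr)
\]
for every $n \in \N$.

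Second, since $m A$ is ample, a standard characterization of ampleness (cf.\ \cite{PAGI}) asserts that the non-vanishing loci $X_s$ with $s \in H^0(X, n m A)$ and $n \in \N$ are affine open subsets that form a basis of the Zariski topology of $X$. Combining this with the identification above, each such basic open $X_s$ equals $X_f$ for some $f \in \Gamma(X, \cL^d)$ with $d = (n a_1, \ldots, n a_k) \in \N^k$, which is precisely the definition of an ample family.

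The proof is essentially a routine translation between divisors and line bundles; the only mildly delicate point is upgrading the $\R$-linear relation $A = \sum q_i \mD_i$ in $\Div_\R(X)$ to an equality of integer divisors after clearing denominators, which is immediate because the generators $\mD_i$ are rational divisors. Note that the hypothesis \emph{contains an ample divisor} is used in an essential way: without it the sections arising from $\mC'$ need not cut out enough open subsets to form a basis of the topology.
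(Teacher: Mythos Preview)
Your proof is correct and follows essentially the same approach as the paper: write (a multiple of) the ample divisor as a non-negative integer combination of the $\mD_i$, then invoke the standard fact that non-vanishing loci of sections of powers of an ample line bundle form a basis of the Zariski topology. The paper's version is cosmetically shorter only because it takes $\cA$ integral and the $\mD_i$ as monoid generators (via Gordan's lemma), obtaining natural-number coefficients directly rather than after clearing denominators.
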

\begin{proof}
	By assumption $\mC'$ contains an (integral) ample divisor $\cA$. Since the divisors $\mD_1,\ldots,\mD_k$ generate $\mC'$, there exist natural numbers $m_1,\ldots,m_k$ such that $\cA=\sum_{i=1}^{k}m_i\mD_i$. 
	
	The open sets $X_s$ with $s\in \Gamma(X,\OO_X(m\cA))=\Gamma(X,\OO_X(m(\sum_{i=1}^{k}m_i\mD_i)))$ and $m$ a positive integer form a basis of the Zariski topology of $X$, hence so does the collection
	\[
	\{ X_s \mid s\in\Gamma(X,\OO_X(d_1\mD_1+\ldots+d_k\mD_k))\, ,\, (d_1,\ldots,d_k)\in\N^k\}\ .
	\]
	Therefore $\OO_X(\mD_1),\ldots, \OO_X(\mD_k)$ constitute an ample family, as claimed. 
\end{proof}

\begin{rmk}
With a slight abuse of notation we will call a collection of divisors $\mD_1,\dots,\mD_k$ an ample family, if the corresponding set of invertible sheaves is. 
\end{rmk}  

\begin{lemma} \label{lemma:proj equality}
Let $X$ be a $\QQ$-factorial normal projective variety such that $\mC\subseteq\Div(X)$ a finitely generated cone of maximal dimension and   $\Cox(\mC')$ is a finitely generated $N^1(X)$-graded ring. Let $\mC^A$ be a maximal dimensional cone in $\mC'$ generated by finitely many ample divisors $\cA_1, \ldots ,\cA_k$. Let  $\cA=\cA_1 + \ldots + \cA_k$ be the ample line bundle obtained as a sum of such divisors, with $\Gamma(X,\cA)\subseteq \Cox(\mC^A)$. Then 
\[
\Proj^\N {\rm R}(X, \cA) \simeq \Proj^{N^1(X)}\Cox(\mC^A)\ .
\]
\end{lemma}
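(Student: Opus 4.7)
The strategy is to establish that both sides of the claimed isomorphism are isomorphic to $X$, from which the conclusion follows by composition. For the right-hand side, since each generator $\cA_i$ of $\mC^A$ is ample and the ample cone is convex, every $\QQ$-divisor in $\mC^A$ is ample. The ring $\Cox(\mC^A) = R(X;\cA_1,\ldots,\cA_k)$ is the multisection ring of finitely many ample divisors, and hence is finitely generated (e.g.\ because $\mC^A$ is a rational polyhedral subcone of the support of the finitely generated ring $\Cox(\mC')$, so finite generation descends by Theorem~\ref{thm:ELMNP}, or directly because a common multiple of the $\cA_i$ is very ample). Consequently $\Cox(\mC^A)$ is a finitely generated local Cox ring whose support lies in the ample cone, and Corollary~\ref{cor:ample local Cox ring} produces an isomorphism $\Proj^{N^1(X)} \Cox(\mC^A) \simeq X$.

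For the left-hand side, the divisor $\cA = \cA_1 + \ldots + \cA_k$ is a sum of ample divisors and is therefore itself ample. Applying Corollary~\ref{cor:Proj of the ample cone} with $r=1$ to the single ample line bundle $\OO_X(\cA)$ yields $\Proj^\Z R(X,\cA) \simeq X$, and since $R(X,\cA)$ is supported in non-negative degrees we may identify $\Proj^\Z R(X,\cA)$ with $\Proj^\N R(X,\cA)$ via the remark following Definition~\ref{defn:multihomogeneous spectrum}. Composing with the isomorphism of the first step gives the asserted $\Proj^\N R(X,\cA) \simeq \Proj^{N^1(X)} \Cox(\mC^A)$.

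The principal technical point requiring care is the finite generation of $\Cox(\mC^A)$ as an $N^1(X)$-graded algebra; this is what ensures that Corollary~\ref{cor:ample local Cox ring} is applicable. An alternative, more hands-on route would be to exploit the natural graded inclusion $R(X,\cA) \hookrightarrow \Cox(\mC^A)$ (after regrading $R(X,\cA)$ via $n \mapsto n[\cA] \in N^1(X)$) in combination with Lemma~\ref{lem:multigraded vs usual proj}, but this requires verifying that sections of the various powers of $\cA$ already suffice to cover $\Proj^{N^1(X)} \Cox(\mC^A)$ by principal opens $D_+(f)$, which in turn hinges on the ampleness of $\cA$.
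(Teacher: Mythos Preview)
Your proof is correct, and in fact it is precisely the alternative argument that the paper itself records in the remark immediately following its own proof: both sides are isomorphic to $X$, the right-hand side by Corollary~\ref{cor:ample local Cox ring} and the left-hand side because $\cA$ is ample. The paper's primary proof takes a different route, invoking \cite{MU}*{Theorem~3.4} directly: since $\cA = \cA_1 + \cdots + \cA_k$ is the determinant of the vector bundle $\cO_X(\cA_1)\oplus\cdots\oplus\cO_X(\cA_k)$ and lies in the interior of the cone spanned by the $\cA_i$, the cited result identifies the Iitaka-type construction for the bundle with the ordinary $\Proj$ of the section ring of its determinant. The authors prefer this external reference because it makes the link to the toric picture (and to the subsequent embedding constructions) more transparent, whereas your argument stays entirely within the machinery developed in the paper and is in that sense more self-contained. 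Your closing paragraph about the alternative via Lemma~\ref{lem:multigraded vs usual proj} is a reasonable observation but is not needed for either approach.
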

\begin{proof}
Notice that the line bundle is in the interior of the cone generated by  $\cA_1, \ldots ,\cA_k$.  The statement then follows from 
\cite{MU}*{Theorem 3.4} as $\cA$ is the determinant of the vector bundle $\cO_X(\cA_1)\oplus \ldots \oplus \cO_X(\cA_k)$.

\end{proof}

\begin{remark}
Alternatively, in the proof of Lemma \ref{lemma:proj equality}, one can use Corollary~\ref{cor:ample local Cox ring}, because both sides are isomorphic to $X$.  However the previous proof will render the relation to the toric situation clearer.
\end{remark}
   
\begin{remark}
Observe that  Lemma~\ref{lemma:proj equality} relies only on the algebraic structure of $\Cox(\mC')$, and hence  is independent of the birational model of $X$ we consider. Therefore, the Lemma applies to any chamber in the decomposition of the movable cone, where we simply need $\mD$ to be any divisor not contained in a wall. This is why we will always require $\mD$ to be a general member of the movable cone.
\end{remark}

\begin{theorem}\label{thm:embedding into simplicial toric variety}
Let $X$ be a $\QQ$-factorial normal projective variety, $\mC\subseteq\Div(X)$ a finitely generated cone of maximal dimension. Consider the finitely generated  $N^1(X)$-graded ring  $\Cox(\mC')$, and  let $\mD_1,\dots,\mD_r$ be a finite system of generators. Let $\mC^A$ be a maximal dimensional cone in $\mC'$ generated by finitely many ample divisors $\cA_1, \ldots \cA_k$, where $\Cox(\mC^A)\subseteq \Cox(\mC')$. 
Then there exist a finite set of global sections of $\mC^A$ which  yield a  closed embedding  $\iota\colon X\hookrightarrow Z$ into a simplicial toric variety. 
\end{theorem}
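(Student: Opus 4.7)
The strategy is to verify the hypotheses of Theorem~\ref{thm:closed embedding into separable} with $R$ the base field $\CC$, $D=N^1(X)$, and $\delta$ the numerical equivalence map, so that the conclusion produces the desired embedding into a separated (hence, by Corollary~\ref{cor:torus embedding a variety}, projective) simplicial toric variety. Lemma~\ref{lem: C prime is an ample family} already guarantees that $\mD_1,\dots,\mD_r$ is an ample family in the sense of Borelli, and Lemma~\ref{lemma:proj equality} identifies $X$ with the relevant $\Proj^{N^1(X)}$; what remains is to exhibit an explicit finite collection of global sections of divisors in $\mC^A$ satisfying conditions (1) and (2) of Theorem~\ref{thm:closed embedding into separable}.

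First I would address condition (1): extracting a $\QQ$-basis of $N^1(X)$ from the ample generators. Since $\mC^A$ has maximal dimension in $\mC'$ and $\mC'$ spans $N^1(X)_\R$ (as $\mC$ does, by the finitely generated maximal dimension hypothesis inherited from the setup), the cone spanned by the classes $[\cA_1],\dots,[\cA_k]$ in $N^1(X)_\R$ has full dimension $\rho \deq \rk N^1(X)$. After relabelling I may assume $\cA_1,\dots,\cA_\rho$ are ample divisors with $[\cA_1],\dots,[\cA_\rho]$ a $\QQ$-basis of $N^1(X)$. Replacing each $\cA_i$ by a sufficiently divisible multiple (which preserves the basis property up to scale, and keeps the divisors inside $\mC^A$ since it is a cone) I may assume every $\cA_i$ is very ample.

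Next I would verify condition (2) using the very ampleness of each $\cA_i$. For a given $i$, the classical consequence of very ampleness is that there exist finitely many global sections $s_{i,1},\dots,s_{i,j_i}\in H^0(X,\cA_i)$ whose non-vanishing loci $X_{s_{i,u}}$ are affine, cover $X$, and such that the images of the fractions $s_{i,v}/s_{i,u}$ generate the ring $\Gamma(X_{s_{i,u}},\cO_X)$ for each $u$. (This is just the statement that the sections defining the projective embedding by $|\cA_i|$ give a standard affine cover whose coordinate rings are generated by the dehomogenised sections.) All these sections lie in $\Cox(\mC^A)\subseteq\Cox(\mC')$, so the total finite collection $\{s_{i,u}\}_{1\le i\le\rho,\,1\le u\le j_i}$ is a set of global sections of divisors in $\mC^A$, as required by the statement.

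With conditions (1) and (2) in hand, Theorem~\ref{thm:closed embedding into separable} produces a closed embedding $\iota\colon X\hookrightarrow \Proj^{N^1(X)}\CC[T_I]$, where the polynomial ring is graded by sending each variable $T_I$ to the numerical class of the product $t_I\deq\prod_i s_{(i,l_i)}\in\Gamma(X,\cA_1\otimes\cdots\otimes\cA_\rho)$; the closedness follows from the properness of $X$. The target is a simplicial toric prevariety by Proposition~\ref{prop:torus}, and it is separated because every $T_I$ has degree lying in the interior of the common cone spanned by $\delta(\cA_1),\dots,\delta(\cA_\rho)$, so Proposition~\ref{prop: separated open subsets} applies. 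Being separated, of finite type, and integral, Corollary~\ref{cor:torus embedding a variety} upgrades it to a projective simplicial toric variety $Z$, completing the construction.

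The delicate point I expect to spend the most care on is the uniformity in condition (2) of Theorem~\ref{thm:closed embedding into separable}: the condition requires \emph{for each single index $i$} an affine open cover on which the sections of $\cA_i$ alone generate the structure sheaf, rather than merely requiring that the whole collection across all $i$ does so. This is exactly what the very ampleness of $\cA_i$ buys us, but it is the reason that passing to sufficiently large multiples of the individual $\cA_i$ at the outset is essential, and also why one cannot substitute the ample family Lemma~\ref{lem: C prime is an ample family} directly in place of this argument.
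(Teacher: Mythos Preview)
Your proposal is correct and follows essentially the same route as the paper: verify the hypotheses of Theorem~\ref{thm:closed embedding into separable} using the ample family from Lemma~\ref{lem: C prime is an ample family}, then invoke Corollary~\ref{cor:torus embedding a variety} to conclude that the separated target is a simplicial toric variety. The paper's own proof is terser and simply asserts that suitable ample $\cA_1,\dots,\cA_k$ with sections satisfying conditions (1) and (2) exist; your argument is more explicit in that you pass to very ample multiples and spell out why the standard affine cover coming from $|\cA_i|$ fulfils condition~(2) index-by-index, which is a genuine clarification. The reference to Lemma~\ref{lemma:proj equality} is not actually needed here (Theorem~\ref{thm:closed embedding into separable} produces the embedding directly from $X$, not from any $\Proj$), but it does no harm.
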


\begin{proof}
The statement follows immediately from  Theorem~\ref{thm:closed embedding into separable}, once we verify that its conditions hold in our setting. 

According to  Lemma~\ref{lem: C prime is an ample family}, the family of invertible sheaves $\OO_X(\mD_1),\ldots, \OO_X(\mD_r)$ is ample. We will consider $S_{\OO_X(\mD_1),\dots,\OO_X(\mD_r)}$ with the grading obtained from the numerical equivalence class map $\delta\colon \Z^r\to N^1(X)$. Since the cone $\mC$ is of maximal dimension, so is $\mC'$, therefore the regrading homomorphism $\delta$ is surjective.  Being a variety, $X$ is  of finite type over the base field, and therefore Theorem~\ref{thm:closed embedding into separable} applies. 

Observe that since  $\pi(\mC')\subseteq N^1(X)_\R$  spans a cone of maximal dimension  containing the class of an ample line bundle,  there exist  ample line bundles $\cA_1, \ldots \cA_k$ whose sections satisfy the conditions required in Theorem~\ref{thm:closed embedding into separable}.

Then Theorem~\ref{thm:closed embedding into separable} gives rise to an embedding (a closed embedding since $X$ is a projective variety)
\[
\iota \colon X \hookrightarrow Z \deq \Proj^{N^1(X)} k[T]
\]
for a multivariate polynomial ring $k[T]$. 

As $Z$ is separated by Theorem~\ref{thm:closed embedding into separable}, it is a simplicial toric variety by Corollary~\ref{cor:torus embedding a variety}. 
\end{proof}

Next, we elaborate the situation to emphasize the parallels to \cite{HK} (cf. \cite{LV}*{Section 3} as well). 
Let $\mD_1,\dots,\mD_r$ be a finite system of generators for the finitely generated $\CC$-algebra  $\Cox(\mC')$. The choice of the generators gives rise to a surjective $\CC$-algebra homomorphism  $\Phi\colon \mathbb{C}[T_1, \ldots, T_r]\to \mC'$ via $T_i\mapsto \mD_i$. 

Keeping in mind Theorem~\ref{thm:embedding into simplicial toric variety}, let $\cA$ be an ample line bundle in $\Cox(\mC')$. Then there exist $\cA_1, \ldots \cA_k$ ample divisors, generating a maximal dimentional cone in $N^1(X)$ such that  $\cA=\cA_1 + \ldots + \cA_k$.  The choice of $\cA$  induces an inclusion of rings 
 \[
 \Cox(\mC')_A \deq \bigoplus_{m\geq 0}\Cox(\mC')_{[\cA^{\otimes m}]} \subseteq \Cox(\mC^A) \subseteq \Cox(\mC')\ ,
 \]
 where $A$ stands for the numerical equivalence class of $\cA$, and each summand is homogeneous with respect to the grading value, which is the numerical equivalence class of $\cA^{\otimes m}$ in $N^1(X)$. Since both the subrings $\Cox(\mC')_A$ and $\Cox(\mC^A)$ are finitely generated, they induce a commutative diagram 
 \[
 \xymatrix{
 	R(\cA) \ar[d] \ar[r] & R(\mC^A) \ar[d] \ar[r] & \CC[T_1,\dots,T_r] \ar[d] \\ \Cox(\mC')_A \ar[r] & \Cox(\mC^A) \ar[r] &  \Cox(\mC')
 }	
 \]	
 where the rings $R(\cA), \; R(\mC^A) \subseteq \CC[T_1,\dots,T_r]$ inherit the $N^1(X)$-grading from $\CC[T_1,\dots,T_r]$, and $R(\cA)$ inherits the $\N A$-grading from $\Cox(\mC')_A$. The two gradings are compatible since the $N^1(X)$-grading induces the $\N A$-grading.  As in the case of Mori dream spaces, the surjections of $\CC$-algebras 
 \[
 R(\cA) \twoheadrightarrow \Cox(\mC')_A \quad{\rm and} \quad R(\mC^A) \twoheadrightarrow \Cox(\mC^A)
 \]
 give rise to closed immersions of schemes
 \[
 j_1 \colon X \simeq \Proj^\Z \Cox(\mC')_A \hookrightarrow \Proj^\Z R(\cA)  \equ Z ,
 \] 
where the projective spectra that occur are taken with respect to the $\Z$-grading given by $A$ and
 \[
 j_2 \colon X \simeq \Proj^{N^1(X)} \Cox(\mC^A) \hookrightarrow \Proj^{N^1(X)} R(\mC^A) \equ Z ,
 \] 
where the projective spectra that occur are taken with respect to the $N^1(X)$-grading given by $\mC^A$. 
According to Lemma~\ref{lemma:proj equality} the two inclusions are naturally isomorphic. 

\begin{proposition} \label{lemma:simplicial}
With notation as above, $\Proj^\N R(\cA)$ is a simplicial toric variety with Cox ring isomorphic to the $N^1(X)$-graded ring $\CC[T_1,\dots,T_r]$.
\end{proposition}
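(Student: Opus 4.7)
\emph{Proof proposal.} My plan is to exhibit $Z\deq\Proj^\N R(\cA)$ as a GIT quotient of affine space by a torus action coming from the $N^1(X)$-grading on $\CC[T_1,\dots,T_r]$; this makes the toric and simplicial structure visible at once, and then lets me read off the Cox ring from Cox's construction for simplicial projective toric varieties.

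First I would translate the $N^1(X)$-grading on $\CC[T_1,\dots,T_r]$ with $\deg(T_i)=[\mD_i]$ into an action of the diagonalizable group scheme $T_D\deq\Spec\CC[N^1(X)]$ on $\A^r$, as recalled at the beginning of Section~2. Since $N^1(X)$ is torsion free, $T_D$ is an algebraic torus. After possibly replacing each $\mD_i$ by a positive multiple (invoking Lemma~\ref{lem:3} so that finite generation is preserved) we may assume that the map $\Z^r\to N^1(X)$, $e_i\mapsto[\mD_i]$, is surjective, hence that $T_D\hookrightarrow(\CC^*)^r$. Under this dictionary, with $A\deq[\cA]$, the graded piece $\CC[T_1,\dots,T_r]_{mA}$ is exactly the space of $T_D$-semi-invariants of weight $mA$, and
\[
R(\cA)\equ\bigoplus_{m\geq 0}\CC[T_1,\dots,T_r]_{mA}
\]
is the corresponding ring of sections of the linearization $A$. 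Consequently $Z=\Proj^\N R(\cA)$ is by definition the GIT quotient of $\A^r$ by $T_D$ with linearization $A$.

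Next I would extract the toric structure. Finite generation of $R(\cA)$, itself a consequence of the finite generation of $\Cox(\mC^A)$ combined with Lemma~\ref{lem:3}, makes $Z$ a projective scheme. The big torus $(\CC^*)^r\subseteq\A^r$ descends to a torus $(\CC^*)^r/T_D$ of dimension $r-\rho$ (with $\rho=\rk N^1(X)$) which acts on $Z$ with a dense open orbit, so $Z$ is a toric variety. Simpliciality follows from the fact that $A$ sits in the interior of an ample chamber of the secondary (GIT) fan, whence the semistable locus has only finite $T_D$-stabilizers and the fan of $Z$ is simplicial. An equivalent route is to combine Proposition~\ref{prop:torus} with Lemma~\ref{lemma:proj equality}, identifying $Z$ with a separated open subvariety of the simplicial torus prevariety $\Proj^{N^1(X)}\CC[T_1,\dots,T_r]$.

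Finally, to identify the Cox ring I would appeal to Cox's construction for simplicial projective toric varieties: $\Cox(Z)$ is canonically a polynomial algebra on the torus-invariant prime divisors of $Z$, graded by $\Cl(Z)$. In the GIT description these invariant primes are the images in $Z$ of the coordinate hyperplanes $\{T_i=0\}\subseteq\A^r$, and the short exact sequence of character lattices induced by $T_D\hookrightarrow(\CC^*)^r$,
\[
0\longrightarrow\ker\bigl(\Z^r\to N^1(X)\bigr)\longrightarrow\Z^r\longrightarrow N^1(X)\longrightarrow 0\ ,
\]
is precisely Cox's exact sequence for $Z$, identifying $\Cl(Z)$ with $N^1(X)$ and recovering the stated grading $\deg(T_i)=[\mD_i]$. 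The main obstacle I expect is matching the divisor class groups exactly (rather than only up to finite index) and verifying simpliciality in the presence of linear relations among the $[\mD_i]$; both are handled by passing to suitable positive multiples of the $\mD_i$ at the outset and by restricting attention to the ample chamber containing~$A$.
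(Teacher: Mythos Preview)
Your overall architecture matches the paper's: identify $Z=\Proj^\N R(\cA)$ as a projective toric variety via the torus action on $\A^r$, read off the Cox ring and the class group from the short exact sequence $0\to M\to\Z^r\to N^1(X)\to 0$, and handle simpliciality as a separate step. Where you phrase things through GIT (the Hu--Keel viewpoint), the paper stays inside its Brenner--Schr\"oer machinery: it calls the toric structure ``a standard construction'', writes down exactly your exact sequence to identify $\Pic(Z)$ with $N^1(X)$ and $\Cox(Z)$ with $\CC[T_1,\dots,T_r]$, and then invokes Theorem~\ref{thm:embedding into simplicial toric variety} for simpliciality.

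There is, however, a genuine gap in your simpliciality argument. The assertion that ``$A$ sits in the interior of an ample chamber of the secondary (GIT) fan'' is not justified: ampleness of $\cA$ on $X$ constrains the restriction to $X$, but says nothing about where $A$ lies relative to the walls of the secondary fan determined by the weights $[\mD_1],\dots,[\mD_r]$ of the $T_D$-action on $\A^r$. If $A$ lands on such a wall the GIT quotient can acquire non-simplicial toric singularities, and neither replacing the $\mD_i$ by positive multiples nor ``restricting to the ample chamber containing $A$'' (which is a chamber in $N^1(X)$, not in the secondary fan) cures this. The paper avoids the issue by citing Theorem~\ref{thm:embedding into simplicial toric variety}: the specific choice $\cA=\cA_1+\cdots+\cA_k$ with the $[\cA_i]$ forming a $\QQ$-basis of $N^1(X)$ forces, via Theorem~\ref{thm:closed embedding into separable} and the cone criterion of Proposition~\ref{prop: separated open subsets}, the relevant affine charts to have common cone interior; separability then combines with Proposition~\ref{prop:torus} to give simpliciality. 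Your alternative route through Proposition~\ref{prop:torus} is closer to this, but Lemma~\ref{lemma:proj equality} as stated concerns $X$ and $\Cox(\mC^A)$, not $Z$ and $\CC[T_1,\dots,T_r]$; you would need to establish the toric analogue (that $\Proj^\N R(\cA)$ is a separated open piece of $\Proj^{N^1(X)}\CC[T_1,\dots,T_r]$) directly.
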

\begin{proof}
By 	construction the $\N$-grading on $R$ is given by
\[
R_m \equ \{ P\in \CC[T_1,\dots,T_r] \mid \deg^{N^1(X)} P = mA  \}\ ,
\]
where $A$ stands for the numerical equivalence class of the ample line bundle of our choice $\cA$, and the degree of a polynomial $P$ with respect to $N^1(X)$ is the numerical equivalence of $\Phi(P)\in N^1(X)$. A standard construction in toric geometry shows that $Z\deq \Proj R$ with the above $\N$-grading is a normal projective  toric variety. 

Since $\mC'$ is a cone of maximal dimension in $N^1(X)_\R$,  $N^1(X)$-grading on $\Cox(\mC')$ induces a surjective map $\CC[T_1,\dots,T_r] \to N^1(X)$. The kernel of this map will correspond to the character-lattice $M$ for the toric variety $Z$ in which we are embedding $X$, and the  map yields the short exact sequence 
\[
0 \lra M \lra \Div(Z) \equ \CC[T_1,\dots,T_r] \lra \Pic(Z)\equ N^1(X) \lra 0\ .
\]
Finally, Theorem~\ref{thm:embedding into simplicial toric variety} shows that the toric variety obtained above is simplicial.
\end{proof}

\begin{lemma}
With notation as above, pulling back via $j\colon X \hookrightarrow \Proj^{\Z} R(\cA)$ yields the original commutative diagram of $N^1(X)$-graded rings
\[
\xymatrix{
	R(\cA) \ar[d] \ar[r] & \CC[T_1,\dots,T_r] \ar[d] \\ \Cox(\mC')_A \ar[r] &  \Cox(\mC')
}	
\]	 
\end{lemma}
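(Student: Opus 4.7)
The plan is to identify the phrase ``pulling back via $j$'' with the pullback of global sections of natural line bundles on $Z$, and then observe that the resulting square is precisely the one from which $j$ was manufactured via the $\Proj$ construction. In this sense the lemma is essentially a compatibility/sanity check between the algebraic and geometric pictures.

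First, I would recall from Proposition~\ref{lemma:simplicial} the identification $\Pic(Z) \cong N^1(X)$ together with the fact that $\CC[T_1,\dots,T_r]$ is the Cox ring of $Z$; equivalently, for each class $d \in N^1(X)$ one has a line bundle $\OO_Z(d)$ on $Z$ whose space of global sections is the $N^1(X)$-graded component $\CC[T_1,\dots,T_r]_d$ (cf.\ the exact sequence $0\to M\to \CC[T_1,\dots,T_r]\to N^1(X)\to 0$ appearing in the proof of that proposition). When $d=mA$, this space coincides with $R(\cA)_m$ by the very definition of $R(\cA)$ as the $\N A$-graded subring of $\CC[T_1,\dots,T_r]$. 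Lemma~\ref{lemma:proj equality} ensures that the $\N$-graded and the $N^1(X)$-graded Proj's used to construct $j=j_1\simeq j_2$ produce the same scheme, so it does not matter which grading we use for the geometric identifications.

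Second, by functoriality of Proj applied to the surjection $\Phi\colon \CC[T_1,\dots,T_r] \twoheadrightarrow \Cox(\mC')$, the closed embedding $j$ satisfies $j^*\OO_Z(d) \simeq \OO_X(d)$ under the identification $\Pic(Z)\cong N^1(X)$, and the pullback on global sections
\[
H^0(Z,\OO_Z(d)) \longrightarrow H^0(X, j^*\OO_Z(d))
\]
is exactly the degree-$d$ component of $\Phi$. Assembling these components for $d$ ranging over $N^1(X)$ gives the right-hand vertical arrow $\CC[T_1,\dots,T_r]\twoheadrightarrow \Cox(\mC')$, whereas restricting to $d\in \N A$ produces the left-hand vertical arrow $R(\cA)\twoheadrightarrow \Cox(\mC')_A$. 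The two horizontal arrows are the inclusions of the $\N A$-graded subrings on each side, and commutativity of the square is immediate from the fact that $\Phi$ sends the subring $R(\cA)\subseteq \CC[T_1,\dots,T_r]$ into the subring $\Cox(\mC')_A\subseteq \Cox(\mC')$, which in turn is exactly how these subrings were defined above.

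The only genuine subtlety, more bookkeeping than mathematical, is reconciling the two gradings in play: the $\Z$-grading on $R(\cA)$ and $\Cox(\mC')_A$ used to define $j$ as a map $\Proj^\Z\Cox(\mC')_A\hookrightarrow \Proj^\Z R(\cA)$, and the $N^1(X)$-grading underlying the toric structure of $Z$ and the identification $\Pic(Z)\cong N^1(X)$. This reconciliation is supplied by Lemma~\ref{lemma:proj equality} together with the explicit recipe $A=[\cA]$, $\cA = \cA_1+\cdots+\cA_k$, which exhibits the $\N A$-grading as the restriction of the $N^1(X)$-grading to the line $\N\cdot A \subseteq N^1(X)$. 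Once this identification is made, each of the four arrows in the diagram coincides with the corresponding arrow appearing in the construction of $j$, and the lemma follows.
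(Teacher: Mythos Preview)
Your proposal is correct and follows essentially the same approach as the paper. The paper's own proof is extremely terse: it reduces the claim to the single verification $j^*(\mathrm{div}\,T_i)=\mD_i$ for each generator, which it then deduces from the commutativity of the original diagram together with Lemma~\ref{lemma:proj equality}; your argument unpacks this same reduction more explicitly by identifying the pullback on global sections with the degree-$d$ component of $\Phi$ and carefully reconciling the $\N A$- and $N^1(X)$-gradings.
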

\begin{proof}
By construction the gradings of the toric divisors are induced via the grading on $X$. Hence we only need to prove that $j^*({\rm div}T_i)=D_i$ for all $1\leq i\leq r$. But this holds by the commutativity of the diagram and Lemma~\ref{lemma:proj equality}.
\end{proof}

\begin{theorem}\label{thm:embedded MMP}

Let $X$ be a $\QQ$-factorial normal projective variety, $\mC\subseteq\Div(X)$ a finitely generated cone. Then there exists an embedding $X \subseteq W$ into a quasi-smooth projective toric variety such that

\begin{enumerate}
\item The restriction $N^1(W)_\R \to N^1(X)_\R$ is an isomorphism.
\item The isomorphism of (1) induces an isomorphism $NE^1(W)\to \mC$.

Even more, if each divisor in the cone $\mC$ is gen, then

\item Every Mori chamber of $X$ is a union of finitely many Mori chambers of W.
\item For every rational contraction $f: X\dashrightarrow X'$ there is toric rational contraction $\overline f: W \dashrightarrow W'$, regular at the generic point of $X$, such that $f=\overline{f}|_X$.
\end{enumerate}
\end{theorem}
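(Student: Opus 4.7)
The plan is to combine the embedding construction of Theorem~\ref{thm:embedding into simplicial toric variety} with the chamber-decomposition output of Theorem~\ref{chambers}. First I would fix a finite set of generators $\mD_1,\dots,\mD_r$ of $\Cox(\mC')$ together with a maximal-dimensional ample subcone $\mC^A\subseteq\mC'$ spanned by ample divisors $\cA_1,\dots,\cA_k$, and set $\cA\deq \cA_1+\dots+\cA_k$. Invoking Theorem~\ref{thm:embedding into simplicial toric variety} produces a closed embedding $\iota\colon X\hookrightarrow W$ into a simplicial projective toric variety $W$, which by Proposition~\ref{lemma:simplicial} is identified with $\Proj^\N R(\cA)\simeq\Proj^{N^1(X)} R(\mC^A)$ and whose Cox ring is $\CC[T_1,\dots,T_r]$.

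Parts (1) and (2) should follow directly from Proposition~\ref{lemma:simplicial}. Indeed, the short exact sequence
\[
0 \lra M \lra \Div(W)=\CC[T_1,\dots,T_r] \lra \Pic(W)=N^1(X) \lra 0
\]
established there yields the isomorphism $N^1(W)_\R\to N^1(X)_\R$ of (1). For (2), the effective cone $NE^1(W)$ of a simplicial projective toric variety is generated by the classes of its toric boundary divisors, and these classes correspond under the isomorphism of (1) exactly to the classes $[\mD_i]\in N^1(X)$; since $\mD_1,\dots,\mD_r$ generate $\mC$ by construction, the isomorphism of (1) restricts to $NE^1(W)\simeq \mC$.

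Assume now that every divisor in $\mC$ is gen. For (3), Theorem~\ref{chambers} yields a finite polyhedral decomposition $\mC=\bigsqcup\pi(\mathfrak{N}_i)$ into Mori chambers of $X$, each indexing a birational contraction $\varphi_i\colon X\dashrightarrow X_i$. The toric ambient $W$ carries its own GKZ/Mori chamber decomposition of $NE^1(W)$. I would argue that under the isomorphism of (1) the toric decomposition \emph{refines} the $X$-decomposition: walls on the toric side are vanishing loci of asymptotic order functions along toric invariant prime divisors (i.e.\ along the $\mD_i$ or their strict transforms on toric small modifications), whereas by Theorem~\ref{thm:ELMNP}(3) each wall of the $X$-decomposition is likewise such a vanishing locus, forming a subcollection of the toric ones. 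Hence every $\mathfrak{N}_i$ is a union of finitely many toric chambers.

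For (4), a rational contraction $f\colon X\dashrightarrow X'$ corresponds to some chamber $\mathfrak{N}_i$, which by (3) contains a maximal toric chamber $\mathfrak{M}_j$; the associated toric rational contraction $\bar f\colon W\dashrightarrow W'$ is built via toric VGIT on $\CC[T_1,\dots,T_r]$, equivalently by varying the relevant element inside $\mathfrak{M}_j$ in the $\Proj^{N^1(X)}$-construction. Since $\iota$ arises from the surjection $\CC[T_1,\dots,T_r]\twoheadrightarrow \Cox(\mC')$ compatibly with all $\delta$-regradings (Remark~\ref{rmk:regrading on spectra}), the restriction $\bar f|_X$ coincides with $f$ on a dense open subset, and regularity at the generic point of $X$ holds because $\iota$ meets the ample locus of each toric birational model. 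The principal obstacle is verifying that every such $f$ genuinely arises from a toric $\bar f$, and that the restriction is exactly $f$ rather than some other contraction compatible with the chamber structure; this is where the gen hypothesis on the entire cone becomes indispensable, as it guarantees via Theorem~\ref{chambers}(3)--(4) that the birational geometry encoded by $\mC$ is fully recorded in the local Cox ring and hence controlled by toric VGIT on $W$, playing the role occupied by the condition $\Pic(X)_\QQ=N^1(X)_\QQ$ in \cite{HK}.
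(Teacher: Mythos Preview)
Your treatment of (1) and (2) matches the paper's: both deduce them from the embedding of Theorem~\ref{thm:embedding into simplicial toric variety} together with Proposition~\ref{lemma:simplicial} and the short exact sequence identifying $\Pic(W)$ with $N^1(X)$.

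For (3) and (4), however, your route diverges from the paper's and has a gap. You argue that the toric GKZ walls are vanishing loci of asymptotic order functions along the toric boundary divisors $T_i$, and that the $X$-walls from Theorem~\ref{thm:ELMNP}(3) form ``a subcollection of the toric ones''. But the $X$-walls are cut out by $o_\Gamma$ for \emph{arbitrary} geometric valuations $\Gamma$ over $X$, not merely those coming from the $\mD_i$; nothing you cite explains why a change of linearity of some $o_\Gamma$ on $X$ must be witnessed by a change of linearity of some toric $o_{T_j}$ on $W$. Without this, the refinement claim in (3) is unsupported, and then your (4) --- which relies on picking a toric chamber $\mathfrak{M}_j$ inside each $\mathfrak{N}_i$ --- is left hanging as well.

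The paper bypasses the wall comparison entirely. It fixes a general $\mD\in\Mov(X)\cap\mC$ and observes that, because every divisor in $\mC$ is gen, the rational contraction $f$ associated to the chamber of $\mD$ is the Iitaka map
\[
X \equ \Proj\Bigl(\bigoplus_{m\geq 0}\Cox(\mC')_{mA}\Bigr) \ \dashrightarrow\ X' \equ \Proj\Bigl(\bigoplus_{m\geq 0}\Cox(\mC')_{mD}\Bigr),
\]
while the surjection $\CC[T_1,\dots,T_r]\twoheadrightarrow\Cox(\mC')$ simultaneously yields the toric map $W=\Proj(\bigoplus_m\CC[T]_{mA})\dashrightarrow W'=\Proj(\bigoplus_m\CC[T]_{mD})$, with $X\hookrightarrow W$ and $X'\hookrightarrow W'$ as closed subschemes. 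This commutative square of $\Proj$'s gives (4) directly, and (3) follows because two classes in the same toric chamber of $W$ produce the same $W'$, hence the same $X'$ and the same $f$. The gen hypothesis enters through \cite{KKL}*{Theorem 4.2(3)} to guarantee that each such $X'$ is actually a variety. This functorial $\Proj$-diagram argument is what you are missing; it replaces, rather than complements, the asymptotic-order-function comparison you attempt.
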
 

\begin{proof}
As we will see, the proofs of (1) and (2) are  quick consequences of Theorem \ref{thm:embedding into simplicial toric variety} and the Lemmas preceding our statement. For (1), observe that  the linear span of $\mC$ generates $N^1(X)_\R$ since $\mC'$ is of maximal dimension. Second, the grading on $\Cox(\mC')$ induces a $N^1(X)$-grading on $\CC[T_1,\dots,T_r] $, where the image is of maximal dimension since  $$\CC[T_1,\dots,T_r] \to \Cox(\mC')$$ is surjective. 

Recall that  the rays of the fan of a  toric variety  correspond to the generators of its Cox ring, whose image via the previous map is given by the generators of $\mC'$. Since every effective divisor is numerically equivalent to a positive $\R$-linear combination of these divisors, we have (2).

The proofs of (3) and (4) rely on the proof of \cite{HK}*{Proposition 2.11}, \cite{BS}*{Theorem 4.4} and \cite{KKL}. Let us consider the chamber decomposition for $\mC'$ given in \cite{KKL}*{Theorem 3.2}. Let $\mD$ be a general member of ${\rm Mov}(X)\cap \mC$ and $\cA$ an ample divisor. In particular, $\mD$ belongs to a chamber of the decomposition, and being a general member we can assume that the divisor belongs to the interior of one of the chambers.  

As every divisor in the cone is gen, we know that the projective spectrum  of the section  ring is given by the corresponding Iitaka fibration. For an element outside of the ample cone, we might not know if the Proj of such an algebra is a variety, but this is ensured by \cite{KKL}*{Theorem 4.2 (3)}. This immediately gives the finite generation of each divisorial ring contained in the cone. 

Again, \cite{KKL}*{Theorem 4.2} immediately yields that two chambers sharing a common wall have compatible morphisms to the image of the fibration induced by the divisors on the wall. The ample model for the chamber containing $\mD$ is uniquely defined, as the induced map is independent of the choice of the divisor in the numerical equivalence class. In particular, since the rational contraction is exactly the Iitaka fibration for the divisorial algebra, this is induced by the toric contraction  
\[
\xymatrix{
X =  {\rm Proj}\left( \bigoplus_{m\geq 0}\Cox(\mC')_{mA}\right) \ar@{^{(}->}[r] \ar@{-->}[d]^f   & {\rm Proj}\left(\bigoplus_{m\geq 0} \mathbb{C}[T_1, \ldots, T_r]_{mA}\right) \ar@{-->}[d]  \\
X'=  {\rm Proj}\left( \bigoplus_{m\geq 0}\Cox(\mC')_{mD}\right)  \ar@{^{(}->}[r] & {\rm Proj}\left(\bigoplus_{m\geq 0} \mathbb{C}[T_1, \ldots, T_r]_{mD}\right)   
}\ .
\]
We conclude the proof by observing  that $f$ is an ample model for $\mD$ and the image of $\mC$ in $N^1(X)$ can be recovered as a finite union of cones that are preimages for such maps
$${\rm deg}(\mC) = \bigcup_f {\rm cone}\left(f^*{\rm Nef}(X'), ex(f)\right)\ .$$ 
\end{proof}

\end{document}